\documentclass[11pt,reqno]{amsart}
\usepackage[english]{babel} 
\textheight=21truecm
\textwidth=15truecm
\voffset=-1cm
\hoffset=-1cm

\usepackage{color}
\usepackage{amsmath, amsthm, amssymb}
\usepackage{amsfonts}
\usepackage[ansinew]{inputenc}
\usepackage{graphicx}
\usepackage{babel}
\usepackage{enumerate}
\usepackage{hyperref}
\theoremstyle{plain}
\newtheorem{thm}{Theorem}[section]
\newtheorem{cor}[thm]{Corollary}
\newtheorem{lem}[thm]{Lemma}
\newtheorem{prop}[thm]{Proposition}

\theoremstyle{definition}
\newtheorem{defi}[thm]{Definition}

\theoremstyle{remark}
\newtheorem{rem}[thm]{Remark}
	 
\numberwithin{equation}{section}

\newcommand{\de}{\partial}

\newcommand{\R}{\mathbb{R}}

\newcommand{\N}{\mathbb{N}}

\newcommand{\eps}{\varepsilon}
\newcommand{\HH}{\mathcal H}

\newcommand{\average}{{\mathchoice {\kern1ex\vcenter{\hrule height.4pt
width 6pt depth0pt} \kern-9.7pt} {\kern1ex\vcenter{\hrule
height.4pt width 4.3pt depth0pt} \kern-7pt} {} {} }}

\def\R{\mathbb{R}}

\begin{document}

\title{Regularity of minimal surfaces with lower dimensional obstacles}

\author{Xavier Fern\'andez-Real}

\address{ETH Z\"urich, Department of Mathematics, R\"amistrasse 101, 8092 Z\"urich, Switzerland}

\email{xavierfe@math.ethz.ch}

\author{Joaquim Serra}

\address{ETH Z\"urich, Department of Mathematics, R\"amistrasse 101, 8092 Z\"urich, Switzerland}

\email{joaquim.serra@math.ethz.ch}

\keywords{Minimal surface, Plateau problem, lower dimensional obstacle, Signorini problem, thin obstacle problem.}

\subjclass[2010]{35R35; 49Q05}

\begin{abstract} 
We study the Plateau problem with a lower dimensional obstacle in $\mathbb{R}^n$. Intuitively, in $\mathbb{R}^3$ this corresponds to a soap film (spanning a given contour) that is pushed from below by  a ``vertical'' 2D half-space (or some smooth deformation of it). 
We establish almost optimal $C^{1,1/2-}$ estimates for the solutions near points on the free boundary of the contact set, in any dimension $n\ge 2$. 

The $C^{1,1/2-}$ estimates follow from an $\varepsilon$-regularity result for minimal surfaces with thin obstacles in the spirit of the De Giorgi's improvement of flatness.  To prove it, we follow Savin's small perturbations method. A nontrivial difficulty in using Savin's approach for minimal surfaces with thin obstacles is that near a typical contact point the solution consists of two smooth surfaces that intersect transversally, and hence it is not very flat at small scales.
Via a new  ``dichotomy approach'' based on barrier arguments we are able to overcome this difficulty and prove the desired result.
\end{abstract}

\maketitle

\section{Introduction}

\subsection{Minimal surfaces with obstacles}

In this paper we study the regularity of minimizers in the Plateau problem with a lower dimensional --- or {\em thin} --- obstacle. Before introducing the problem in further detail let us contextualize it by recalling five closely related classical problems and commenting on them.
\begin{itemize}
\item The Plateau problem:
\begin{equation}\label{eq.plateuspb}
 \min \big\{ P(E; B_1) \ :\  E\setminus B_1 = E_\circ \setminus B_1  \big\},
\end{equation}
where $E_\circ\subset \R^n$ (boundary condition), and  $B_1$ denotes the unit ball of $\R^{n}$, $E\subset \R^n$,  and $P(E;B_1)$ denotes the relative perimeter of the set $E$ in $B_1$.

\item The Plateau problem with an obstacle:
\begin{equation}\label{eq.thepb}
 \min \big\{ P(E; B_1 ) \ :\  E \supset \mathcal O, \  E\setminus B_1 = E_\circ \setminus B_1   \big\}\,
\end{equation}
where $E_\circ,E$ are as above and  $\mathcal O\subset E_\circ$ (the obstacle) is given. 

\item The nonparametric obstacle problem:
\begin{equation}\label{eq.OPnonpar}
\min_{v}\biggl\{\int_{B_1'} \sqrt{1+|\nabla v|^2}\,:\, v \geq \psi \text{ in $B_1'$},\,v|_{\partial B_1'}=g\biggr\},
\end{equation}
where $B_1'$ denotes the unit ball of $\R^{n-1}$,  $g:\partial B_1'\to \R$ (the boundary condition) is given, $v:B_1'\to \R$,  and $\psi:B_1'\to \R$ is the obstacle
satisfying $\psi|_{\partial B_1'}<g$.

\item The obstacle problem:
\begin{equation}\label{eq.OP}
\min_{v}\biggl\{\int_{B_1'} \frac{|\nabla v|^2}{2}\,:\, v \geq \psi \text{ in $B_1'$},\,v|_{\partial B_1'}=g\biggr\},
\end{equation}
where $g$, $v$, and $\psi$, are as above.

\item The Signorini problem, or {\rm thin} obstacle problem:
\begin{equation}\label{eq.Sign}
\min_{v}\biggl\{\int_{B_1'} \frac{|\nabla v|^2}{2}\,:\, v \geq \psi \text{ in $B_1'\cap\{x_{n-1}=0\}$},\,v|_{\partial B_1'}=g\biggr\},
\end{equation}
where $g$ and $v$ are as above, and now $\psi:B_1'\cap\{x_{n-1}=0\}\to \R$ (the thin obstacle) acts only on $\{x_{n-1}=0\}$.
 \end{itemize}
 
Note that \eqref{eq.OPnonpar} is a particular case of \eqref{eq.thepb}, namely, when $\partial \mathcal O$ and $\partial E$ are  graphs. Also, \eqref{eq.OP} is, in turn,  a limiting case of \eqref{eq.OPnonpar} --- for $\eps$-flat graphs, the area functional $\int \sqrt{1+|\eps \nabla v|^2 }$ becomes the Dirichlet energy $\int \frac 12 |\eps \nabla v|^2$ at leading order.

The regularity of solutions and free boundaries is nowadays well understood in both the classical obstacle problem \eqref{eq.OP} --- see  \cite{Caf77, Caf98} --- and in the Signorini problem --- see \cite{AC04,ACS08}.
The case of minimal surfaces with thick obstacles (both in parametric and nonparametric form) is also well understood  --- see  \cite{Kin73, BK, Jen80, Giu10}.

This paper is concerned with the regularity of minimizers of the Plateau problem with lower dimensional, or thin, obstacles. Namely, we consider \eqref{eq.thepb} with obstacle
\begin{equation} \label{eq.thinO}
\mathcal O:= \Phi \big(\{x_{n-1}=0, x_n\le0\}\big) 
\end{equation}
where $\Phi:\R^{n}\rightarrow \R^n$ is some smooth ($C^{1,1}$) diffeomorphism. We denote 
\[
\partial \mathcal O:= \Phi \big(\{x_{n-1}=0, x_n=0\}\big). 
\]
 
This problem \eqref{eq.thepb}-\eqref{eq.thinO} is the geometric version of the  Signorini problem \eqref{eq.Sign} in the same way that  \eqref{eq.thepb} with thick $\mathcal O$ is the geometric version of \eqref{eq.OP}.
To visualize a solution of this problem in $\R^3$, one can think of a soap film (spanning a given contour) that is pushed from below by a vertical 2D half-space, as depicted in Figure \ref{fig.potato}.
Note that, in $\R^3$, we cannot use a ``wire'' (i.e. a one dimensional curve) as obstacle, since the surface will not ``feel'' it\footnote{More precisely, one can see that if $\mathcal O$ had codimension two, then solutions of \eqref{eq.thepb} with an infinitesimal tubular neighbourhood of $\mathcal O$ as obstacle would become, in the limit, solutions of the Plateau problem \eqref{eq.plateuspb} (without obstacle).}.

\begin{figure*}[h]  
\centering
\includegraphics[width = 11cm]{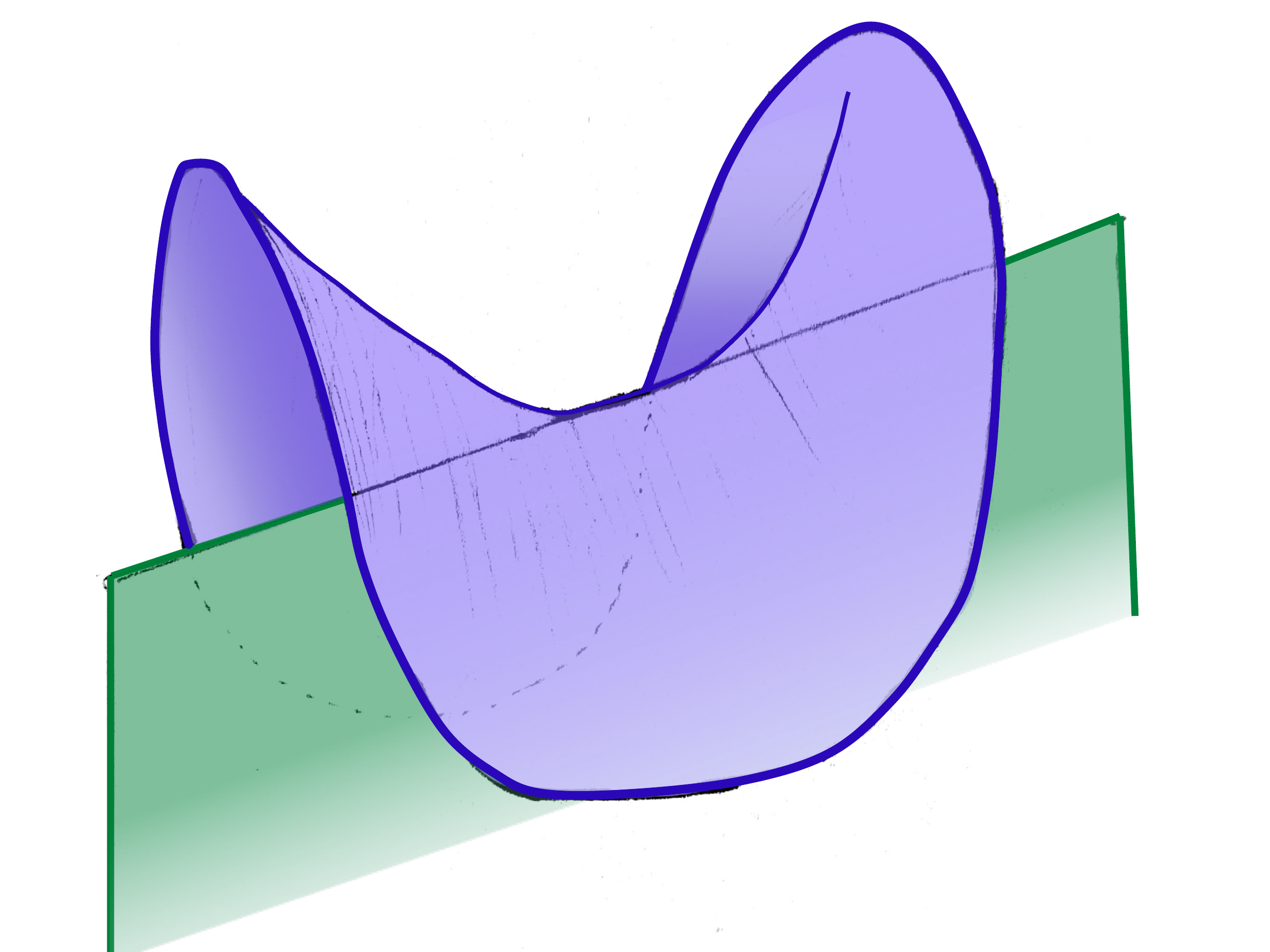}
\caption{The ``potato chip configuration'', popularized by Caffarelli.}
\label{fig.potato}
\end{figure*}

Although the problem of  minimal surfaces with thin obstacles was introduced by De Giorgi  \cite{DGthin} already in 1973  (he established an existence result), very little was known on the regularity of its solutions. De Acutis in \cite{Deacu} established $C^1$ regularity around points of the solution belonging to $\mathcal{O}\setminus \de\mathcal{O}$. To our knowledge, the only known regularity results up to $\de\mathcal{O}$ concern the nonparametric case --- as in \eqref{eq.OPnonpar} but with $\psi$ as in \eqref{eq.Sign}.  They are due to Kinderlehrer \cite{Kin71} who proved $C^1$ regularity estimates for the solution in two dimensions, and to Giusti  \cite{Giu72}, who obtained Lipschitz estimates for the solution in every dimension.

The difficulty in studying \eqref{eq.thepb}-\eqref{eq.thinO} (with respect to the same problem with a thick obstacle) lies on the fact that near a typical point of the contact set the hypersurface $\partial E$ consists of two surfaces that intersect transversally on $\partial \mathcal O$. Therefore, $\partial E$ is typically not flat at small scales and thus \eqref{eq.thepb} cannot be treated as a perturbation of \eqref{eq.Sign}.
A more subtle dichotomy argument is needed: in Subsection \ref{ssec.main} we outline the idea of this new approach that is tailored to overcome the previous difficulty.

Let us also point out  that it is not completely obvious  how to give  a meaningful notion of solution to \eqref{eq.thepb}-\eqref{eq.thinO}. The main issue is that with the Caccioppoli definition of relative perimeter $P$ we have
\begin{equation}\label{eq.warning}
P(E\cup \mathcal O;B_1) = P(E; B_1 ) \quad \mbox{for all measurable } E,
\end{equation}
and thus the obstacle $\mathcal O$ seems to be ignored by $P$. 
This issue led De Giorgi \cite{DGthin} to introduce a more appropriate notion of perimeter that is suitable for the study of thin obstacle problems (this is currently known as the De Giorgi measure).  
We choose the similar (and a posteriori equivalent) approach of  looking at the thin obstacle as a limit of infinitesimaly thick neighbourhoods of it. See Subsection \ref{ssec.defsol} for a more detailed discussion on this issue.

The goal of this paper is to address the question of  the regularity of solutions to \eqref{eq.thepb}-\eqref{eq.thinO}. In particular, the main result of this paper is the proof of the following local almost optimal regularity result.

\begin{thm}
\label{thm.generic}
Let $E$ be a solution to the thin obstacle problem \eqref{eq.thepb}-\eqref{eq.thinO} in the unit ball of $\R^n$, $n\ge2$. Then, $\de E$ is $C^{1,1/2-}$ around contact points and up to the contact set.
\end{thm}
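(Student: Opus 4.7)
The plan is to reduce Theorem~\ref{thm.generic} to an $\eps$-regularity (improvement-of-flatness) result at contact points and then iterate it at dyadic scales. The natural strategy is Savin's small perturbations method: show that whenever the rescaled $\de E$ is sufficiently $L^\infty$-close to a fixed ``reference solution'' in $B_1$, the flatness around the contact point decays geometrically at a rate compatible with $C^{1,\alpha}$ for every $\alpha<1/2$. The loss in the exponent, and the appearance of the value $1/2$, should ultimately come from the optimal $C^{1,1/2}$ regularity of the linearized problem, namely the Signorini problem \eqref{eq.Sign}, which formally arises as the leading-order expansion of the area functional in a small graph over a flat tangent plane.

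The first step is a blow-up analysis at an arbitrary free boundary point $x_0 \in \de E \cap \de \mathcal O$. After flattening $\mathcal O$ via $\Phi^{-1}$, the rescalings $(E-x_0)/r$ are minimizers of a perimeter functional with the flat thin obstacle $\{x_{n-1}=0,\,x_n\le 0\}$, and by monotonicity and compactness they converge along subsequences to minimal cones respecting this obstacle. These cones should be classified as either (i) a hyperplane tangent to $\de\mathcal O$ along the $(n-2)$-dimensional edge, or (ii) a ``potato chip'' cone $\K_\theta$ formed by two half-hyperplanes meeting transversally along $\{x_{n-1}=x_n=0\}$ at some angle $\theta \in (0,\pi)$. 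The main obstacle, and the hardest conceptual point of the proof, is that configuration (ii) is \emph{not} flat at any small scale, so the standard Savin scheme cannot be applied directly around a typical contact point. I would resolve this by introducing a dichotomy at each scale: writing each of the two sheets of $\de E$ as a graph over the corresponding half-hyperplane of the reference cone, one either (a) shows via barriers that the two graphs remain genuinely transverse and decouple, so that each sheet satisfies a linearized Signorini problem whose $C^{1,1/2}$ theory yields improvement of flatness on each side separately, or (b) detects that one of the two sheets leaves the obstacle on a set of definite size, in which case that sheet becomes a classical minimal surface and De~Giorgi's flatness improvement applies. Designing explicit sub- and supersolutions that enforce this alternative while respecting both the thin obstacle and the transverse structure—and while absorbing the nonlinearity of the area operator—is where the bulk of the technical work will lie.

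Once the $\eps$-regularity dichotomy is established, the iteration proceeds in a standard fashion. Starting from an arbitrary contact point $x_0$, the blow-up classification guarantees that at a small enough scale $\de E$ is close either to a hyperplane or to a potato chip cone $\K_\theta$, so one falls into one of the two regimes of the dichotomy. Applying the improvement of flatness at that scale produces a new reference configuration at a smaller scale, and iterating dyadically gives a geometric decay of the flatness deficit with loss factor arbitrarily close to (but strictly larger than) $1/2$. Summing the contributions of each scale yields a H\"older modulus of continuity for the normal to each sheet with every exponent $\alpha<1/2$, which translates to $\de E \in C^{1,1/2-}$ up to and across the contact set, establishing Theorem~\ref{thm.generic}.
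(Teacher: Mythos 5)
Your high-level outline (blow-up, classification of tangent cones, scale-by-scale dichotomy, dyadic iteration) matches the paper's strategy, and the observation that the exponent $1/2$ comes from the Signorini problem is correct. However, the content of your dichotomy has a genuine error: you have essentially swapped the roles of the two linearizations, and this makes both branches fail as stated.

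In your branch (a) you claim that when the two sheets remain transverse and decouple, ``each sheet satisfies a linearized Signorini problem.'' This is not the case. If the contact set is full (which is what forces the transverse, decoupled picture), then each sheet is a minimal surface with \emph{prescribed boundary} on $\partial\mathcal O$; the constraint becomes an equality, and the linearization is a Dirichlet problem for the Laplacian on a half-ball, not a thin-obstacle problem. The correct tool here is the boundary regularity theory for minimal surfaces (and, for deformed $\Phi$, the $C^{1,1}$ estimate of Br\'ezis--Kinderlehrer and Jensen), yielding regularity better than $C^{1,1/2}$ --- this is exactly what the paper's Lemma~\ref{lem.gamma} proves. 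Conversely, your branch (b) says that when a sheet ``leaves the obstacle on a set of definite size'' you can treat it as a classical minimal surface and apply De Giorgi's improvement of flatness. But the flat regime (small aperture angle $\theta\lesssim\eps$) is precisely where the thin obstacle constraint remains genuinely active on an unknown closed contact set inside $\{x_{n-1}=0\}$; $\partial E$ does not detach cleanly from $\mathcal O$, and you cannot ignore the constraint. It is here that one must write $\partial E$ as a single graph over a hyperplane, take vertical rescalings, and pass to the limit to obtain a viscosity solution of the \emph{Signorini} problem \eqref{eq.Sign}, whose optimal $C^{1,1/2}$ estimate (Athanasopoulos--Caffarelli) drives the improvement of closeness with exponent $\alpha<1/2$ (Proposition~\ref{prop.impclos}). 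So the Signorini linearization belongs to the flat/non-transverse branch, and the boundary-regularity argument belongs to the transverse/full-contact branch; as written, your proposal has them reversed and also suppresses the thin obstacle precisely in the regime where it is most relevant.

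A secondary gap: the dichotomy in the paper is quantitative (flat means $\theta<C_\circ\eps$ relative to the closeness parameter $\eps$), and the barrier argument of Lemma~\ref{lem.cneps} shows that $\theta\gtrsim\eps$ forces the contact set to be full --- this is what makes the two cases exhaustive and the iteration closable. Your formulation ``leaves the obstacle on a set of definite size'' is not of this quantitative form, and it is unclear that your two alternatives are exhaustive. You will also need the (approximate) monotonicity formula for deformed obstacles and the classification of minimal cones in \emph{every} dimension (Proposition~\ref{prop.classif_cones}), which you mention only implicitly.
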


The appropriate notion of solution is discussed in Subsection \ref{ssec.defsol}.
Let us emphasize here that this local regularity near contact points result holds in any dimension $n\ge 2$, in contrast to the classical regularity theory of minimal surfaces in which minimizers are regular only up to dimension $7$. As we will see, this difference is due to the presence of the thin obstacle, which rules out solutions with singularities of the type of Simons and Lawson's cones like those appearing in dimension $n\ge 8$ in the Plateau problem without obstacles.

In the following subsections we recall the main steps in the regularity theory for sets of minimal perimeter and present the appropriate analogues for \eqref{eq.thepb}-\eqref{eq.thinO}.

 \subsection{Improvement of flatness}
For the classical Plateau problem De Giorgi \cite{dG61} established, in 1961, the following fundamental result:
\begin{thm}[\cite{dG61}]\label{eq.flatimpliessmooth}
Let $E\subset \R^n$ be a minimizer of the perimeter functional in $B_1$ and assume that $\partial E\cap B_1 \subset  \{ |e\cdot x|\le \eps_\circ \}$ for some $e\in \mathbb S^{n-1}$, where $\eps_\circ=\eps_\circ(n)$ is some positive dimensional constant. Then, 
$\partial E\cap B_{1/2}$ is a smooth hypersurface.
\end{thm}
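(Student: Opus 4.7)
The plan is to follow the classical De Giorgi improvement-of-flatness scheme and conclude by an elliptic bootstrap. The core is the following iterable statement: there exist dimensional constants $\rho\in(0,1/2)$ and $\eta\in(0,1/2)$ such that, whenever $\varepsilon\le\varepsilon_\circ(n)$,
\[
\partial E\cap B_1\subset\{|e\cdot x|\le\varepsilon\}\ \Longrightarrow\ \partial E\cap B_\rho\subset\{|e'\cdot x|\le\eta\rho\varepsilon\}
\]
for some unit vector $e'$ with $|e-e'|\le C\varepsilon$. Applied at every point of $\partial E\cap B_{1/2}$ and iterated, this yields a summable change of normal direction $e_k\to e_\infty$ and a geometric decay of the relative excess, from which $\partial E\cap B_{1/2}\in C^{1,\alpha}$ for some $\alpha>0$.

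To prove the improvement I would argue by compactness and contradiction. Suppose it fails: there are minimizers $E_k$, scales $\varepsilon_k\downarrow 0$ and directions $e_k$ with $\partial E_k\cap B_1\subset\{|e_k\cdot x|\le\varepsilon_k\}$ for which no $e'$ works at scale $\rho$. Rotating so that $e_k=e_n$, a standard consequence of monotonicity plus the fact that blow-down limits of flat minimizers are hyperplanes shows $\partial E_k\cap B_{3/4}$ is the graph of $u_k$ with $\|u_k\|_\infty\le\varepsilon_k$. The rescaled functions $\tilde u_k:=u_k/\varepsilon_k$ satisfy $\|\tilde u_k\|_\infty\le 1$, and the key ingredient is an equi-H\"older bound $[\tilde u_k]_{C^\alpha(B_{1/2})}\le C$. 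Granted this, a subsequence converges uniformly to $u_\infty$, and passing to the limit in the minimal surface equation $\operatorname{div}\!\bigl(\nabla u_k/\sqrt{1+|\nabla u_k|^2}\bigr)=0$ shows $\Delta u_\infty=0$ in $B_{1/2}$.

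Interior regularity for harmonic functions yields $|u_\infty(x')-u_\infty(0)-\nabla u_\infty(0)\cdot x'|\le C_0|x'|^2$ on $B_{1/2}$. Choosing $\rho$ so small that $C_0\rho^2<\eta\rho/2$ and picking $k$ large, the direction $e':=(-\nabla u_\infty(0),1)/\sqrt{1+|\nabla u_\infty(0)|^2}$ satisfies the conclusion of the improvement for $E_k$, a contradiction. Once $\partial E$ is locally a $C^{1,\alpha}$ graph $x_n=u(x')$, Schauder theory for the quasilinear minimal surface equation promotes $u$ to $C^{2,\alpha}$, and the standard bootstrap produces $u\in C^\infty$ (in fact real analytic).

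The hardest step, by far, is the equi-H\"older estimate on $\tilde u_k$: the rescaled graphs are not uniformly bounded solutions of a single linear equation, so De Giorgi--Nash--Moser does not apply off the shelf. One must exploit minimality in a scale-invariant way, either through a Harnack inequality for almost-flat minimal graphs (\`a la De Giorgi--Massari--Miranda) or, in Savin's alternative route, through viscosity comparison with second-order polynomial barriers built from paraboloids. Verifying that the limit $u_\infty$ actually solves $\Delta u_\infty=0$ is then routine, because the first-order expansion of the area lagrangian at a horizontal hyperplane is exactly the Dirichlet energy.
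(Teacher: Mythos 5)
Your proposal is correct and matches the route the paper takes: the paper quotes Theorem \ref{eq.flatimpliessmooth} from De Giorgi and only remarks that it follows by iterating the improvement-of-flatness property \eqref{eq.improofflatness}, which is precisely what you prove. Your compactness-and-contradiction argument for that property --- with the equi-H\"older/Harnack bound on the vertical rescalings correctly identified as the essential ingredient, \`a la Savin, followed by harmonicity of the limit and a Schauder bootstrap --- is exactly the scheme the paper itself employs for its generalizations in Sections \ref{sec.4}--\ref{sec.5} (cf.\ Remark \ref{rem.impflat} and Step 1 of the proof of Theorem \ref{thm.flat_obst_p}).
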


This theorem follows from the following \emph{improvement of flatness} property for minimizers $E$ of the perimeter in $B_1$. Namely, given $\alpha\in (0,1)$ there exist positive constants $\eps_\circ (n,\alpha)$ and $\rho_\circ(n,\alpha)$ such that, whenever $0\in \partial E$ and $\eps\in(0,\eps_\circ)$ then the following implication holds:
\begin{equation}\label{eq.improofflatness}
 \partial E\cap B_1 \subset  \big\{ |e\cdot x|\le \eps \big\}  \quad    \Rightarrow \quad  \partial E\cap B_{\rho_\circ } \subset  \left\{  |\tilde e \cdot x|\le \eps\rho_\circ^{1+\alpha} \right\}.
\end{equation}
Here, $e$ and $\tilde e$ denote two possibly different unit vectors (in $\mathbb S^{n-1}$). 

Combined with the classification of stable minimal cones by Simons \cite{Sim68}, Theorem \ref{eq.flatimpliessmooth} yields that minimizers of the perimeter in $\R^n$ are smooth for $3\le n\le 7$. This result is optimal since, in dimensions $n\ge 8$, Bombieri, De Giorgi, and Giusti \cite{BdGG69} showed the existence of minimal boundaries with an $(n-8)$-dimensional linear space of cone-like singularities.

The philosophy of Theorem \ref{eq.flatimpliessmooth} is also shared by other key regularity results of nonlinear PDEs: \emph{if a solution happens to be close enough to some special solution (e.g., the hyperplane), then it is regular}. 
These are the so-called ``$\eps$-regularity results''.

The goal of the  paper is  to establish an $\eps$-regularity result for \eqref{eq.thepb}-\eqref{eq.thinO}, thus extending De Giorgi's improvement of flatness theorem to the setting of problem \eqref{eq.thepb}-\eqref{eq.thinO} --- see Theorem \ref{thm.main} below. 
As a consequence, we will prove almost optimal $C^{1,1/2-}$ estimates for minimizers of \eqref{eq.thepb}-\eqref{eq.thinO} in $\R^n$ that are sufficiently close to a canonical blow-up solution (the {\em wedges} introduced in the following subsection).
We will also see that these canonical blow-up solutions are the only possible blow-ups at any contact point, and then Theorem \ref{thm.generic} will follow.

\subsection{Blow-ups} 
\label{ssec.blowup}
An essential tool in the theory of minimal surfaces is the monotonicity formula. Namely, if $\partial E$ is a minimal surface and $x_\circ\in \partial E$, then the function
\begin{equation}\label{eq.phi}
\mathcal A(r) := \frac{1}{r^{n-1}}\HH^{n-1}\big(\partial E\cap B_r(x_\circ)\big)
\end{equation}
is monotone nondecreasing. In addition, $\mathcal A$ is constant if and only if $E$ is a cone. 
A standard consequence of this monotonicity formula is that blow-ups of a minimizer of the perimeter $E\subset \R^n$ at any point $x_\circ\in \partial E$ are {\em minimizing cones}. 
Simons proved in \cite{Sim68} that half-spaces are the only minimizing cones in dimensions $n\le 7$.
As a consequence,  one can always apply Theorem \ref{eq.flatimpliessmooth} near $x_\circ$ after zooming in enough --- this gives the smoothness of perimeter minimizers for $n\le 7$.

For problem \eqref{eq.thepb}-\eqref{eq.thinO} we find several analogies with this theory. As we will prove in Lemma \ref{lem.mono}, if $E$ is a minimizer of \eqref{eq.thepb}-\eqref{eq.thinO} and $x_\circ \in \partial E\cap \partial \mathcal O$ is a contact point, then the same function $\mathcal A(r)$ in \eqref{eq.phi} is still monotone when $\Phi = {\rm id}$ (and an approximate monotonicity formula is also available for general smooth $\Phi$; see Lemma \ref{lem.mono}). As a consequence, blow-ups are also cones for \eqref{eq.thepb}-\eqref{eq.thinO}.
It is trivially false, however, that hyperplanes are the only possible blow-ups in low dimensions.
Indeed, the {\em wedges} (see Figure~\ref{fig.lambda0})
\begin{equation}
\label{eq.deflambda0}
 \Lambda_{\gamma, \theta} := \big\{x\in \R^n  \,:\, e_{\gamma+\theta}\cdot x\le 0 \  \mbox{  and  }\ e_{\gamma-\theta} \cdot x\le 0\big\},
\end{equation}
for
\begin{equation}\label{eq.eandtheta}
e_\omega := \sin\omega\, \boldsymbol e_{n-1} +\cos\omega\, \boldsymbol e_{n}, 
 \qquad   -\frac \pi 2 \le \gamma \le \frac \pi 2, \qquad  0\le \theta \le \frac \pi 2-  |\gamma|.
\end{equation}
are solutions to \eqref{eq.thepb}-\eqref{eq.thinO} for $\Phi = {\rm id}$ . Thus, they are always possible blow-ups.

Being a wedge, $\Lambda_{{\gamma}, \theta}$ is the intersection of two semispaces with normal vectors contained in the plane generated by $\boldsymbol{e}_{n-1}$ and $\boldsymbol{e}_{n}$. The aperture angle of the wedge is given by $\pi - 2\theta$, while its rotation angle is given by $\gamma$ with respect to $\boldsymbol e_{n}$ (we take the convention that $\boldsymbol{e}_{n-1} = e_{\pi/2}$). Note also that there is the restriction $0\le \theta \le \frac{\pi}{2}-|\gamma|$ to guarantee that the obstacle $\{x_{n-1}= 0, x_n\le 0\}$ is contained in $\Lambda_{{\gamma}, \theta}$. 

\begin{figure*}[t]
\centering
\includegraphics[width = 15cm]{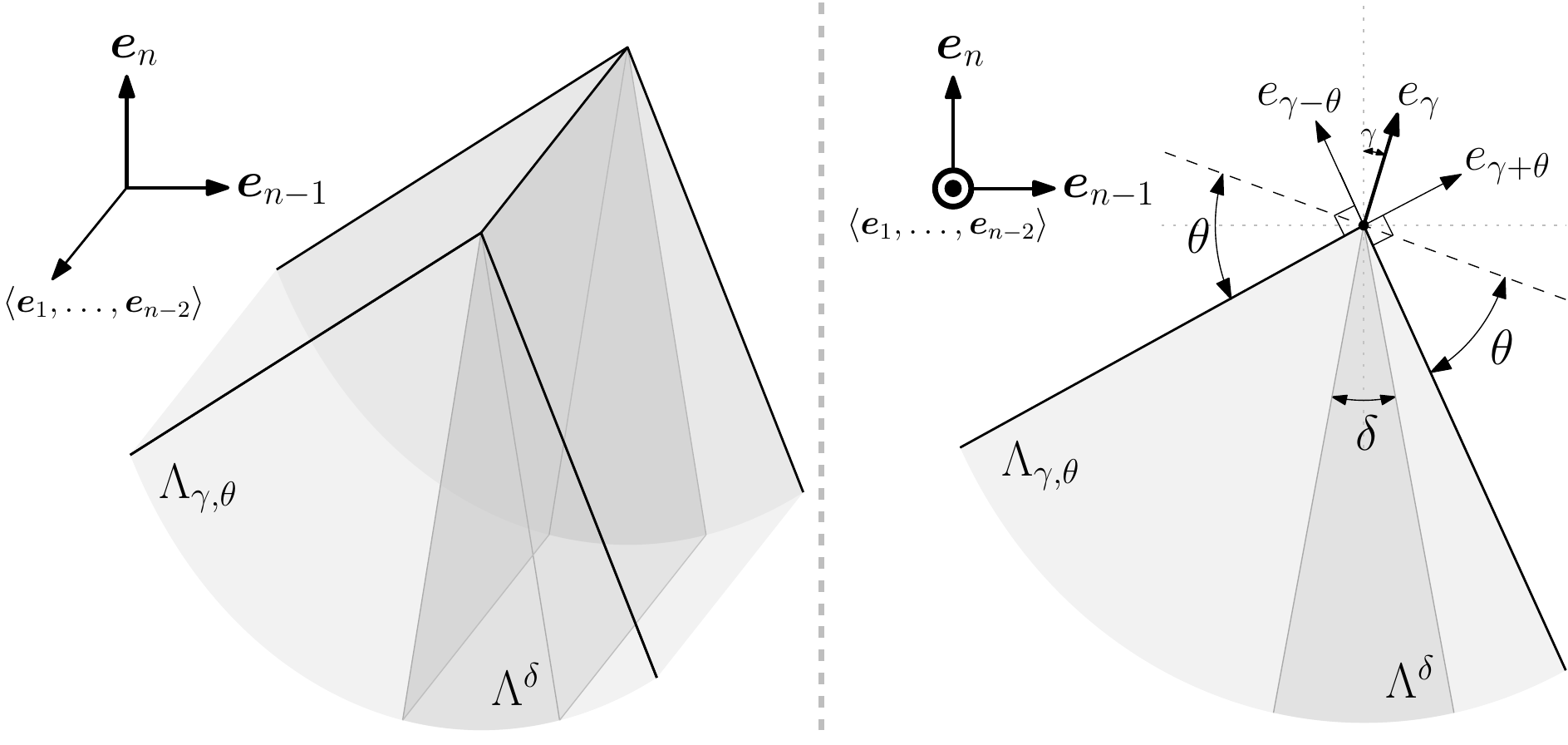}
\caption{Representations of $\Lambda_{{\gamma}, \theta}$ and $\Lambda^{\delta} $.}
\label{fig.lambda0}
\end{figure*}

We will show that, in all dimensions, the wedges are the only possible blow-ups around contact points.
More precisely, if $E$ is a minimizer of \eqref{eq.thepb}-\eqref{eq.thinO} and $x_\circ\in \partial E\cap \partial  \mathcal O$ (i.e. $x_\circ$ is a contact point) we have, in a suitable frame depending on $x_\circ$,
\begin{equation}\label{eq.blowupO}
\frac{1}{r_k}\big(\mathcal O-x_\circ\big)\  \longrightarrow \  \{x_{n-1} =0, \ x_n\le 0\} 
\end{equation}
and
\begin{equation}\label{eq.blowupE}
\frac{1}{r_k} \big(E-x_\circ\big)\  \longrightarrow  \Lambda_{{\gamma}, \theta}.
\end{equation}
This will be a consequence of the the classification of conic solutions to the thin obstacle problem, given in Proposition~\ref{prop.classif_cones}.

\subsection{Rigorous notion of solution to \eqref{eq.thepb}-\eqref{eq.thinO}} 
\label{ssec.defsol}
Given a measurable set $E$ and an open set $\Omega\subset \R^{n}$, we recall the standard definition of the relative perimeter of $E$ in $\Omega$ as 
\begin{equation}
\label{eq.min}
P(E;\Omega) = \int_{\Omega}|\nabla \chi_E| = \sup_{g\in C^1_0(\Omega), \|g\|_{L^\infty}\leq 1} \left|\int_{E}{\rm div }\,g\,\right|.
\end{equation}

With this definition of perimeter \eqref{eq.warning} holds. Thus, unless we define the problem with further precision, minimizers of \eqref{eq.thepb}-\eqref{eq.thinO} will be --- strictly speaking --- just the ones of \eqref{eq.plateuspb}, ignoring $\mathcal O$.

This, of course, is not what we have in mind when we think of \eqref{eq.thepb}-\eqref{eq.thinO}. Heuristically, we would like that if $\partial E$ attaches from both sides to $\mathcal O$ in some region, then the area of it is counted twice in the computation of the perimeter of $E$ instead of being ignored.
To solve this issue De Giorgi introduced in \cite{DGthin} a notion of perimeter that is suitable for the study of thin obstacle problems (the De Giorgi measure); see also \cite{Deacu}.  
Here we will use the similar approach (that will be a posteriori equivalent) of considering a thin obstacle as a limit of thick obstacles.

Let us introduce the precise notion of \eqref{eq.thepb}-\eqref{eq.thinO} that will be used in this paper. 
For $\delta>0$ small, let us denote 
\begin{equation}
\label{eq.deflambdad}
\Lambda^{\delta} := \Lambda_{0, \frac \pi 2 -\delta}.
\end{equation}
(Note that $\Lambda^{\delta}$ is very sharp wedge, pointing in the $\boldsymbol{e}_{n}$ direction.)

\begin{defi}\label{defi.notionmin}
We say that $E$ is a \emph{minimizer of \eqref{eq.thepb}-\eqref{eq.thinO}} in $B_1$ if $E$ has positive density at some point of $\mathcal O$  and there exist $\delta_k\downarrow 0$, $E_k$ minimizers of 
\begin{equation}\label{eq.wedgepb}
 \min \Big\{ P(\tilde E; B_1 ) \ :\  \tilde E\setminus B_1 = \big(E_\circ \cup  \Phi(\Lambda^{\delta_k}) \big)\setminus B_1 \quad \mbox{and}\quad  \Phi(\Lambda^{\delta_k}) \subset \tilde E  \Big\}
\end{equation}
such that $\chi_{E_k}\rightarrow \chi_E$ in $L^1(B_1)$. 
\end{defi}
Note that $\Phi\big(\Lambda^{\delta_k}\big)$ are \emph{thick} sets approximating $\mathcal O$. 
Now, minimizers of \eqref{eq.wedgepb} ``feel'' the obstacle no matter how small $\delta_k$ is. 
The intuitive idea behind this definition is that a sequence $E_k$ as in Definition \ref{defi.notionmin} will not converge to a solution to the Plateau problem unless the obstacle $\mathcal O$ is ``inactive'' (i.e., the obstacle is contained in density one points for the solution to the Plateau problem). 
The philosophy of the paper will be to prove regularity estimates for problem \eqref{eq.wedgepb} that are robust as $\delta_k\downarrow 0$. As a consequence, we will be able to show that the previous intuitive idea is actually fact. Namely, as it will be clear from the results of the paper, if the solution to the Plateau problem (with boundary data $E_\circ$) crosses $\mathcal O\setminus \de\mathcal{O}$, then there exists a minimizer of \eqref{eq.thepb}-\eqref{eq.thinO} which is not a solution of Plateau problem (and therefore, the thin obstacle plays an active role).

We remark that any minimizer according to Definition~\ref{defi.notionmin} (up to replacing  the complement of $E$ by the zero density points of $E$) is a minimizer in the sense of De Giorgi by \cite{Deacu} (see Remark~\ref{rem.vanishes}). Conversely, it is not true a priori that any minimizer in the sense of De Giorgi can be recovered as a minimizer in the sense of Definition~\ref{defi.notionmin}. Nonetheless, minimizers of the De Giorgi perimeter present \emph{locally} an aperture around the obstacle by \cite{Deacu} (and thus, a wedge fits within), and therefore, locally around contact points they are minimizers in the sense of Definition~\ref{defi.notionmin}.  In particular, since our regularity results are local, they apply to minimizers in the sense of De Giorgi. (See Remark~\ref{rem.dg}.)

\subsection{Regularity for solutions sufficiently close to a wedge}\label{ssec.main}

The first result of this paper is stated next, after introducing some notation and a definition. Throughout the paper we will denote
\[ X\subset Y \mbox{ in }B  \quad\Leftrightarrow\quad  X\cap B\subset Y\cap B.\]
We also introduce the following 
\begin{defi}\label{defi.epsclose} 
We say that $E$ is $\eps$-{\em close} to $\Lambda_{{\gamma},\theta}$ in $B$ if
\[
 \Lambda_{{\gamma},\theta}^{-\eps} \subset E\subset\Lambda_{{\gamma},\theta} ^{\eps} \quad  \mbox{in }  B
 \]
 where 
 \[\Lambda_{{\gamma},\theta}^\eps := \{x\in \R^n \, : \, {\rm dist}(x,\Lambda_{{\gamma},\theta}) \le \eps \}, \quad \Lambda_{{\gamma},\theta}^{-\eps} := \{x\in \R^n \, : \, {\rm dist}(x,\R^n\setminus\Lambda_{{\gamma},\theta}) \ge \eps \}.\]
\end{defi}

Here is our main result, which we call {\em improvement of closeness}:
\begin{thm}[Improvement of closeness]\label{thm.main}
Given $\alpha\in\left(0,\frac 1 2\right)$ there exist positive constants $\eps_\circ$ and $\rho_\circ$ depending only on $n$ and $\alpha$ such that the following holds:

Assume that, for some $\delta>0$, a set $E\subset \R^n$ with $P(E;B_1)<\infty$ satisfies $\Phi(\Lambda^{\delta}) \cap B_1\subset E$ and
\begin{equation}\label{eq.TOP}
 P(E;B_1)\le P(F;B_1) \quad \forall F\  \mbox{such that }E\setminus B_1 = F\setminus B_1 \mbox{ and } \Phi(\Lambda^{\delta}) \cap B_1\subset F. 
\end{equation}
Suppose that $0\in \partial E\cap \partial \mathcal O$, $\eps \in(0,\eps_\circ)$, and
\begin{equation}
\label{eq.Phihyp}
\Phi(0)=0,\quad D\Phi(0) = {\rm id},\quad|D^2\Phi|\le \eps^{1+\frac 1 2}.
\end{equation}

Then, 
\begin{equation}\label{eq.improvement}
\mbox{$E$ is $\eps$-close to $ \Lambda_{{\gamma},\theta}$  in  $B_1$}   \quad \Rightarrow \quad  \mbox{$E$ is  $\eps\rho_\circ^{1+\alpha}$-close to  $\Lambda_{\tilde \gamma, \tilde \theta}$  in  $B_{\rho_\circ}$},
\end{equation}
where $\gamma$, $\tilde \gamma$, $\theta$, and $\tilde \theta$, are as in \eqref{eq.eandtheta}.
\end{thm}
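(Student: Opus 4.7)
The plan is a Savin-style compactness–contradiction scheme, with a new dichotomy on the wedge angle that handles the corner along $\partial\mathcal{O}$. Fix $\alpha\in(0,1/2)$ and assume the statement fails: for each $k$ there are $\eps_k\downarrow 0$, $\delta_k>0$, diffeomorphisms $\Phi_k$ satisfying \eqref{eq.Phihyp} with $\eps=\eps_k$, wedges $\Lambda_{\gamma_k,\theta_k}$, and minimizers $E_k$ of \eqref{eq.TOP} that are $\eps_k$-close to $\Lambda_{\gamma_k,\theta_k}$ in $B_1$ but for which no wedge witnesses the improvement on $B_{\rho_\circ}$ at level $\eps_k\rho_\circ^{1+\alpha}$. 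Up to subsequences, $(\gamma_k,\theta_k)\to(\gamma_\infty,\theta_\infty)$ and $\Phi_k\to\mathrm{id}$ locally uniformly. The aim is to pass the normalized profiles $\eps_k^{-1}(\partial E_k-\partial\Lambda_{\gamma_k,\theta_k})$ to the limit, identify a linear thin-obstacle problem, invoke its $C^{1,1/2}$ regularity, and thereby derive a contradiction.

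The first decisive step is the dichotomy on the wedge opening $\theta_k$. In the \emph{flat} regime $\theta_k\ll\eps_k^{1/2}$, $\Lambda_{\gamma_k,\theta_k}$ is itself $\eps_k$-close to a half-space, so $E_k$ is $O(\eps_k)$-close to that half-space; a mild adaptation of De Giorgi's improvement of flatness (Theorem~\ref{eq.flatimpliessmooth}), accounting for the fact that $\partial\mathcal{O}$ sits along the boundary of the half-space, produces the required new wedge $\Lambda_{\tilde\gamma,0}$. In the \emph{transversal} regime $\theta_k\gtrsim\eps_k^{1/2}$, the two half-planes of $\partial\Lambda_{\gamma_k,\theta_k}$ meet at a definite angle near the origin, and away from the common edge I parametrize $\partial E_k$ as two graphs $w_k^\pm$ over them. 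The essential new input is a family of barriers consisting of wedges themselves: for any nearby $(\gamma',\theta')$ the wedge $\Lambda_{\gamma',\theta'}$ (possibly translated tangentially to $\partial\mathcal{O}$) contains $\mathcal{O}$, and is therefore an admissible comparison set for \eqref{eq.TOP}; combined with the containment $\Phi_k(\Lambda^{\delta_k})\subset E_k$, comparison against sliding and rotating wedges yields two-sided pointwise bounds on $w_k^\pm$ uniformly in $k$ and $\delta_k$, up to $\partial\mathcal{O}$. Savin-type $C^{0,\alpha}$ viscosity estimates for the minimal surface equation then give equicontinuity of $\tilde w_k^\pm:=w_k^\pm/\eps_k$.

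Passing to the limit, $\tilde w_\infty^\pm$ are harmonic on each half-plane of $\Lambda_{\gamma_\infty,\theta_\infty}$, while the constraint $\Phi_k(\Lambda^{\delta_k})\subset E_k$ together with the minimality of $E_k$ translates, in the limit, into a Signorini-type coupling between $\tilde w_\infty^+$ and $\tilde w_\infty^-$ on the common edge $\{x_{n-1}=x_n=0\}$, for which the $C^{1,1/2}$ regularity of Athanasopoulos–Caffarelli \cite{AC04} applies (after an unfolding across the obstacle plane it is in fact the standard scalar Signorini problem). Consequently $\tilde w_\infty^\pm$ has at the origin a first-order expansion with an $r^{3/2}$ remainder. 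The key geometric observation is that an infinitesimal affine correction on each sheet of a wedge is equivalent, to first order, to a change of the parameters $(\gamma,\theta)$: this identifies the new wedge $\Lambda_{\tilde\gamma,\tilde\theta}$. Picking $\rho_\circ$ small enough depending on $\alpha$, the $r^{3/2}$ rate translates, after undoing the scaling, into $\eps_k\rho_\circ^{1+\alpha}$-closeness of $E_k$ to $\Lambda_{\tilde\gamma,\tilde\theta}$ in $B_{\rho_\circ}$ for large $k$ — the desired contradiction.

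The main obstacle is the second step: establishing compactness of $\tilde w_k^\pm$ up to $\partial\mathcal{O}$. Because $\partial E_k$ is not a graph over any single fixed hyperplane in a full neighborhood of the edge, Savin's hyperplane-barrier machinery cannot be applied to a single global graph. Using wedges rather than hyperplanes as barriers resolves the geometric issue — each sliding/rotating wedge is simultaneously an admissible competitor and sharp enough to pin $\partial E_k$ near the corner — but verifying that these wedge barriers actually enforce a genuine viscosity sub/supersolution property for $\tilde w_k^\pm$, uniformly in the thick-wedge parameter $\delta_k$, is the delicate technical heart of the argument, and is precisely what the authors call the ``dichotomy approach based on barrier arguments''.
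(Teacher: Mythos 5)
Your overall architecture (compactness plus a dichotomy on the wedge angle) matches the paper's, but the content of the two branches is essentially swapped, and this is not a cosmetic issue. In the paper the Signorini problem arises in the \emph{flat} regime $\theta\lesssim C_\circ\eps$: there the contact set can be a nontrivial proper subset of $\partial\mathcal O$, the vertical rescalings $(x',x_n/\eps)$ of $\partial E_k$ converge to a viscosity solution of the thin obstacle problem \eqref{eq.thinobst} (Lemmas~\ref{lem.Eeps} and \ref{lem.ctop}), and it is precisely the Athanasopoulos--Caffarelli $C^{1,1/2}$ estimate that produces the improvement and forces $\alpha<1/2$. Your proposal instead treats this regime with ``a mild adaptation of De Giorgi's improvement of flatness,'' which cannot work: near an active contact point $\partial E$ is only a supersolution, not a minimal surface, the linearized limit is superharmonic on the contact set, and the correct target is a wedge $\Lambda_{\tilde\gamma,\tilde\theta}$ with possibly $\tilde\theta>0$ (coming from the jump $\partial^-_{n-1}u(0)-\partial^+_{n-1}u(0)\ge 0$ of the Signorini solution), not a half-space $\Lambda_{\tilde\gamma,0}$. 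Moreover your threshold $\theta_k\ll\eps_k^{1/2}$ only makes $\Lambda_{\gamma_k,\theta_k}$ an $O(\eps_k^{1/2})$-neighborhood of a half-space, so the claimed $O(\eps_k)$-closeness to a half-space is false and the scales in that branch do not close.

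Conversely, in the transversal regime the paper's key step (Lemma~\ref{lem.cneps}) is to prove that the contact set is \emph{full} in $B_{1/2}$, i.e. $E\subset\Phi(\Lambda_{\gamma,\theta-C_\circ\eps})$, by sliding the explicit strict supersolution $S^+_\beta$ of Lemma~\ref{lem.supersolution} (a graph with mean curvature $\le -c\beta$). After that, $\partial E$ decouples into two separate minimal surfaces with Dirichlet-type boundary data on $\partial\mathcal O$, and the improvement follows from $C^{1,1}$ boundary regularity (reflection, or Br\'ezis--Kinderlehrer/Jensen when $\Phi\ne{\rm id}$) --- no Signorini coupling survives there, contrary to what you assert. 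Your substitute barriers, translated and rotated wedges, do not do the job: a wedge is an admissible competitor but not a strict supersolution, so touching $\partial E_k$ at an interior point of a face yields no contradiction and cannot pin the sheets near the edge; and without the fullness of the contact set you cannot even justify the two-graph decomposition $w_k^\pm$ up to $\partial\mathcal O$. Finally, the compactness of the vertical rescalings uniformly in $\delta_k$ --- which you correctly identify as the technical heart --- is left entirely unproved; in the paper this is the content of the Harnack-type oscillation-decay Propositions~\ref{prop.harnacktype} and \ref{prop.harnackthino}, which exploit both the supersolution property and the minimality among sets containing the obstacle. As written, the proposal therefore has genuine gaps in both branches of the dichotomy and in the compactness step.
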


\begin{rem} Let us comment on the statement of Theorem \ref{thm.main}:
\begin{enumerate}
\item[(1)] This result generalizes the classical De Giorgi's improvement of flatness theorem \eqref{eq.improofflatness}.

\item[(2)] Our estimate \eqref{eq.improvement} is designed to be applied, iteratively in a sequence of dyadic balls, to a minimizer $E$ of \eqref{eq.wedgepb}. It gives $C^{1,\alpha}$ regularity of $\partial E$ at points of the contact set; see Theorem~\ref{thm.main2} below.
\item[(3)] An essential feature of our result is that the constant $\eps_\circ$ is independent of $\delta$. Thus \eqref{eq.improvement} is stable as $\delta\downarrow 0$ and hence applies to solutions of \eqref{eq.thepb}-\eqref{eq.thinO}; see Definition \ref{defi.notionmin}.
\item[(4)]The assumption $\alpha<1/2$ is almost sharp. Indeed, one can easily see that the statement of the theorem cannot be true for $\alpha\in(\frac 1 2,1)$ by using that the optimal regularity of solutions to the Signorini problem is $C^{1,\frac 1 2}$.
\item[(5)] If $\Phi:\R^n \rightarrow \R^n$ is any $C^{1,1}$ diffeomorphism and $x_\circ$ belongs to $\partial \mathcal O = \Phi(\{ x_{n-1}= x_n=0\})$, then for $\rho>0$ and in some new coordinates $\bar x =\psi_{x_\circ}(x)$ with origin at $x_\circ$ such that
\[ \psi_{x_\circ}(x) :=  \rho^{-1}R_{x_\circ} (x -x_\circ), \quad \mbox{where $R_{x_\circ}$ is an orthogonal matrix}, \]
the assumption \eqref{eq.Phihyp} will be fulfiled by some new diffeomorphism $\bar \Phi$ satisfying $\bar\Phi(\Lambda^{\bar\delta}) = \psi(\Phi(\Lambda^\delta))$ --- see Lemma \ref{lem.asumpPhi}. Hence, assumption \eqref{eq.Phihyp} is always satisfied after a change of coordinates.
\end{enumerate}
\end{rem}

\subsection{On the proof of Theorem~\ref{thm.main}}\label{ssec.main2}

Let us now briefly comment on the proof of Theorem \ref{thm.main}. Our main idea is to use a ``dichotomy approach'', which is combined with Savin's ``small perturbation method''.
More precisely, we prove by a barrier argument that --- if $\eps_\circ$ is small enough --- one of the following two alternatives must hold:
\begin{enumerate} 
\item [$(a)$] $\partial E$ is very flat in $B_1$.
\item [$(b)$] The contact set is full in $B_{3/4}$ (it contains $\partial \mathcal O \cap B_{3/4}$) and $\partial E$ splits into two minimal surfaces that meet along $\partial\mathcal O$ with some angle.
\end{enumerate}
Then, on the one hand, if $(a)$ holds we can use that our problem is a perturbation of the Signorini problem \eqref{eq.Sign} and exploit the $C^{1,1/2}$ regularity for \eqref{eq.Sign} to prove \eqref{eq.improvement}. For this we use the ``small perturbation method'' pioneered by Savin --- see \cite{Sav09, Sav10, Sav10b}.

On the other hand, if $(b)$ holds then $\partial E$ splits in $B_{3/4}$ into two minimal surfaces with boundary, each of them flat in a different direction. Since the contact set is full we can interpret it as a smooth ``boundary condition''. Then, using the $C^{1,1}$ regularity up to the boundary of flat minimal surfaces, we can improve the flatness of each of the two surfaces separately to prove \eqref{eq.improvement}.

\subsection{Consequences}
From our Theorem~\ref{thm.main}, as in the classical theory, we get that once the minimizer is sufficiently close to a ``wedge'' type set $\Lambda_{{\gamma},\theta}$, then it has a local $C^{1,\alpha}$ structure. 

\begin{thm}\label{thm.main2}
Given $\alpha\in\left(0,\frac 1 2\right)$ there exists a positive constant $\eps_\circ$ depending only on $n$ and $\alpha$ such that the following holds: 

Assume that, for some $\delta>0$, a set $E\subset \R^n$ with $P(E;B_1)<\infty$ satisfies $\Phi(\Lambda^{\delta}) \cap B_1\subset E$ and \eqref{eq.TOP}.
Suppose that $0\in \partial E\cap \partial \mathcal O$, that
\begin{equation}
\label{eq.Phihyp2}
\Phi(0)=0,\quad D\Phi(0) = {\rm id},\quad|D^2\Phi|\le \eps_\circ^{1+\frac 1 2},
\end{equation}
and that $E$ is $\eps_\circ$-close to $\Lambda_{\gamma,\theta}$ in $B_1$.

Then, 
 $\de E$ has the following $C^{1,\alpha}$ structure in $B_{1/2}$. Either:
\begin{enumerate}
\item[(a)] In some appropriate coordinates $y = (y',y_n) = (y_1,\dots,y_n)$, $\Phi^{-1}(\partial E)$ is the graph $ \{y_n = h(y')\}$ of a function $h\in C^0(\overline{B_{1/2}'})$ that belongs to $C^{1,\alpha}(\overline{B_{1/2}'^+})\cap C^{1,\alpha}(\overline{B_{1/2}'^-})$, where $B_{1/2}'$ denotes the ball in $\R^{n-1}$ and $B_{1/2}'^\pm$ are the half-balls $B_{1/2}'\cap \{\pm y_{n-1} > 0\}$. Moreover, we have $h\ge 0$ on $y_{n-1} = 0$ and $\nabla h$ is continuous on $\{y_{n-1} = 0\}\cap \{h > 0\}$. 
\end{enumerate}
\noindent or 
\begin{enumerate}
\item[(b)]   $\partial E \cap B_{1/2}$ is the union of two $C^{1,1-}$ surfaces that meet on $\partial \mathcal O$ with full contact set in $B_{1/2}$.
\end{enumerate}
\end{thm}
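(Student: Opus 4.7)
The plan is to prove Theorem~\ref{thm.main2} by iterating Theorem~\ref{thm.main} on a geometric sequence of balls at every contact point in $B_{1/2}$, and then to dichotomize according to the limiting wedge angle $\theta_\infty$ at the origin. This follows the classical route by which an improvement of flatness statement automatically produces $C^{1,\alpha}$ regularity.

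First I would iterate Theorem~\ref{thm.main} at the origin. Setting $\eps_k:=\eps_\circ\rho_\circ^{k\alpha}$ and rescaling by the factor $\rho_\circ$ at each step, an induction produces wedges $\Lambda_{\gamma_k,\theta_k}$ such that $E^{(k)}:=\rho_\circ^{-k}E$ is $\eps_k$-close to $\Lambda_{\gamma_k,\theta_k}$ in $B_1$. The Hessian hypothesis is preserved under this rescaling, since $\Phi^{(k)}(y):=\rho_\circ^{-k}\Phi(\rho_\circ^k y)$ satisfies $|D^2\Phi^{(k)}|\le\rho_\circ^k\eps_\circ^{3/2}\le\eps_k^{3/2}$ whenever $3\alpha/2\le 1$, which holds for every $\alpha<1/2$. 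Since two wedges that are both $\eta$-close to $E^{(k)}$ on $B_1$ satisfy $|\gamma-\gamma'|+|\theta-\theta'|\lesssim\eta$, the sequence $(\gamma_k,\theta_k)$ is geometric Cauchy with rate $\rho_\circ^\alpha$; its limit $(\gamma_\infty(0),\theta_\infty(0))$ is the tangent wedge at the origin. Repeating the iteration at each contact point $x_\circ\in\partial E\cap\partial\mathcal O\cap B_{1/2}$ — after the coordinate change of Remark~(5) following Theorem~\ref{thm.main}, which restores \eqref{eq.Phihyp} at $x_\circ$ — and comparing with the origin on balls of scale $|x_\circ|$, yields the H\"older modulus
\[
|(\gamma_\infty(x_\circ),\theta_\infty(x_\circ))-(\gamma_\infty(0),\theta_\infty(0))|\le C\eps_\circ|x_\circ|^\alpha.
\]

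The dichotomy between (a) and (b) is then governed by a small threshold $\theta_*>0$. If $\theta_\infty(0)<\theta_*$, the above modulus forces $\theta_\infty(x_\circ)$ to stay small throughout the contact set in $B_{1/2}$, so at every contact point $\partial E$ is trapped in arbitrarily thin slabs about a hyperplane. In appropriate coordinates this makes $\Phi^{-1}(\partial E)\cap B_{1/2}$ globally a graph $\{y_n=h(y')\}$ with $h\ge 0$ on $\{y_{n-1}=0\}$ (the obstacle lying below); the two sides of each approximating wedge become $h|_{\{\pm y_{n-1}\ge 0\}}$, each $C^{1,\alpha}$ up to $\{y_{n-1}=0\}$ by the pointwise H\"older control of the tangent half-planes, with continuity of $\nabla h$ on $\{h>0\}\cap\{y_{n-1}=0\}$ because there the obstacle is inactive and both halves glue as a single classical minimal graph — this is case~(a). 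If on the contrary $\theta_\infty(0)\ge\theta_*$, the modulus gives $\theta_\infty(x_\circ)\ge\theta_*/2$ everywhere, so the limiting wedge at each contact point is genuinely transversal and necessarily touches $\partial\mathcal O$ from both sides; the contact set is then forced to be full on $\partial\mathcal O\cap B_{1/2}$, and $\partial E\cap B_{1/2}$ splits into two almost-flat minimal hypersurfaces sharing the smooth boundary $\partial\mathcal O$. A Hardt--Simon / Allard-type boundary regularity result for almost-flat minimal surfaces with prescribed $C^{1,1}$ boundary then provides the $C^{1,1-}$ regularity of each piece, yielding case~(b).

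The main obstacle is case~(a): promoting the pointwise tangent-wedge H\"older estimate into $C^{1,\alpha}$ regularity of $h$ up to the crease $\{y_{n-1}=0\}$ from each side separately, recovering also the continuity of $\nabla h$ across the non-contact portion of the crease. The restriction $\alpha<1/2$ is sharp here, as it mirrors the optimal $C^{1,1/2}$ regularity of the model Signorini solution $\mathrm{Re}(x_{n-1}+ix_n)^{3/2}$; any $\alpha\ge 1/2$ would contradict that model. Once this one-sided boundary $C^{1,\alpha}$ estimate is in place, the remaining verifications in both alternatives are routine.
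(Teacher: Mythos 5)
Your overall architecture (iterate the improvement of closeness at contact points, track the limiting wedge, dichotomize on its angle) is the right one and matches the paper's, but there are two concrete gaps. The first is the placement of the dichotomy threshold. The paper dichotomizes on whether the angle $\theta$ of the wedge \emph{at unit scale} satisfies $\theta\ge C_\circ\eps_\circ$ or $\theta<C_\circ\eps_\circ$; in the first case a single application of the sliding--barrier Lemma~\ref{lem.cneps} gives $E\subset\Phi(\Lambda_{\gamma,\theta-C_\circ\eps})$ in $B_{1/2}$, hence the contact set is full there and Lemma~\ref{lem.gamma} yields alternative (b) directly. Your threshold $\theta_*$ on the \emph{limiting} angle $\theta_\infty(0)$ cannot be calibrated consistently: if $\theta_*$ is a fixed constant, then in the branch $\theta_\infty(0)<\theta_*$ the initial angle can be of order $\theta_*\gg\eps_\circ$, and the claim that $\partial E$ is ``trapped in arbitrarily thin slabs about a hyperplane'' near contact points is false --- the surface is close to a genuinely creased wedge, the vertical rescalings do not converge to a Signorini solution, and the whole case-(a) mechanism breaks down on the range $C_\circ\eps_\circ\lesssim\theta\lesssim\theta_*$. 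If instead $\theta_*\sim\eps_\circ$, then the H\"older modulus $C\eps_\circ|x_\circ|^\alpha$ is of the same order as $\theta_*$, so the deduction $\theta_\infty(x_\circ)\ge\theta_*/2$ fails; and even granting a uniform angle lower bound at contact points, fullness of the contact set in all of $B_{1/2}$ does not follow without an additional open-and-closed argument (the paper gets it for free from the barrier at unit scale, and uses Remark~\ref{rem.fb} only to identify free boundary points with $\theta_\infty=0$).

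The second gap is that you defer precisely the hard step. Producing the pointwise family of tangent wedges with a geometric rate is the content of Proposition~\ref{prop.wow}, but converting it into alternative (a) is the content of Proposition~\ref{thm.thmu}, which is not routine: one must first show that $\partial E$ is a Lipschitz graph at all (this requires, at every point $x_\circ\in\partial E$ \emph{off} the contact set, anchoring at the nearest contact point $z$, using Proposition~\ref{prop.wow} at $z$ at scale $r=\mathrm{dist}(x_\circ,\Delta_E)$, and then running the classical interior improvement of flatness in $B_{r/2}(x_\circ)$), and then prove the one-sided $C^{1,\alpha}$ estimate \eqref{eq.up} by a two-case comparison of $|y'-z'|$ against the distances of $y',z'$ to the contact set, using the wedge expansion at the nearest contact point in one regime and the interior $C^{1,\alpha}$ estimate for the normal of a genuine minimal surface in the other. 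Without this, neither the graph property nor the H\"older continuity of $\nabla h$ up to $\{y_{n-1}=0\}$ (nor its continuity on $\{h>0\}$) is established. I would also note that in case (b) each piece $\Gamma_\pm$ is a minimal surface only away from the thick obstacle $\Phi(\Lambda^\delta)$, so a bare Allard-type boundary regularity statement does not apply verbatim; the paper uses a reflection argument (for $\Phi\equiv{\rm id}$) and the boundary regularity theory for obstacle problems (Theorem~\ref{thm.flat_obst_p} and \cite{Jen80}) to handle this.
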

In the previous statement $C^{1,1-} :=\bigcap_{\beta\in(0,1)} C^{1,\beta}$.

\begin{rem}
\label{rem.minsurf}
It will be clear from the proofs that if $\mathcal O$ is a minimal surface (with boundary), then $\partial E$ cannot stick to $\mathcal O\setminus \partial \mathcal O$ and (b) must hold with the same regularity as that of $\partial O$. Namely, if $\partial \mathcal O$ is a $C^{k,\beta}$ (resp. analytic) codimension two surface, then the two surfaces in (b) will also be $C^{k,\beta}$ (resp. analytic), and not just $C^{1,1-}$.
\end{rem}

Theorem~\ref{thm.main2} requires the solution to be sufficiently close to a wedge-type set $\Lambda_{{\gamma}, \theta}$. Thanks to the following classification of global conical solutions to our problem, we will have that this is always the case (after rescaling) near any contact point. 

\begin{prop}[Classification of minimal cones in $\R^n$] 
\label{prop.classif_cones}
Let $\Sigma \subset \R^n$ be a cone, i.e. $t\Sigma = \Sigma$ for all $t > 0$, with $\partial\Sigma \neq \varnothing$. Suppose that $\Sigma$ satisfies \eqref{eq.TOP} with $\Phi\equiv {\rm id}$. 

Then, $\Sigma = \Lambda_{{\gamma}, \theta}$ for some $\gamma$ and $\theta$ as in \eqref{eq.eandtheta}.
\end{prop}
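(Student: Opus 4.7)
My plan is to prove the classification by induction on $n\ge 2$ via a Federer-style dimension reduction along the edge $\partial\mathcal O$.

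\textbf{Base case $n=2$.} Here $\mathcal O$ is a ray and $\Lambda^\delta$ is a narrow sector. A cone $\Sigma\subset\R^2$ at $0$ is a union of planar sectors with $P(\Sigma;B_1)$ equal to the number of boundary rays; the constraints $\Lambda^\delta\subset\Sigma$, $\partial\Sigma\ne\varnothing$, and minimality force $\partial\Sigma$ to consist of exactly two rays, so $\Sigma=\Lambda_{\gamma,\theta}$ (with the degenerate $\theta=0$ giving a half-plane).

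\textbf{Inductive step.} Assuming the classification in dimension $n-1$, I first note that $0\in\partial\Sigma$: otherwise $B_\varepsilon(0)\subset\Sigma$ would dilate to $\Sigma=\R^n$ by cone invariance. The goal is to prove translation invariance of $\Sigma$ along each direction $\boldsymbol{e}_i$, $i=1,\dots,n-2$, so that $\Sigma=\R^{n-2}\times\Sigma''$ for a two-dimensional minimizing cone $\Sigma''$, to which the base case applies.

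Fix $p=\boldsymbol{e}_1\in\partial\mathcal O$; the case $p\in\mathrm{int}\,\Sigma$ is excluded by a barrier comparison based on minimality (cone invariance would otherwise propagate a conical neighborhood of the $\boldsymbol{e}_1$-axis and force $\Sigma=\R^n$). For $p\in\partial\Sigma$, cone invariance $\Sigma=r\Sigma$ gives the identity
\[
\tfrac{1}{r}(\Sigma-p) \;=\; \Sigma - \tfrac{p}{r}\qquad (r>0),
\]
so every tangent cone $T_p\Sigma:=\lim_{k}\tfrac{1}{r_k}(\Sigma-p)$, $r_k\downarrow 0$, is translation invariant along $p=\boldsymbol{e}_1$. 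Since $\mathcal O$ is itself $\boldsymbol{e}_1$-invariant and the blow-up preserves minimality, $T_p\Sigma$ is a minimizing cone containing $\mathcal O$ that splits as $\R\boldsymbol{e}_1\times\widetilde\Sigma$, with $\widetilde\Sigma\subset\R^{n-1}$ a minimizing cone for the $(n-1)$-dimensional thin obstacle problem. The inductive hypothesis then gives $\widetilde\Sigma=\Lambda^{(n-1)}_{\gamma,\theta}$, so $T_p\Sigma=\Lambda^{(n)}_{\gamma,\theta}$.

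\textbf{Main obstacle.} The hardest step is the rigidity: upgrading the blow-up identification to the global equality $\Sigma=T_p\Sigma$. The plan is to apply Theorem~\ref{thm.main}: since $\Sigma$ is $\varepsilon$-close to the wedge $T_p\Sigma$ at small scales around $p$, iterating the improvement of closeness gives a unique tangent cone at $p$ and represents $\partial\Sigma$ as a $C^{1,1/2-}$ perturbation of the two half-hyperplanes forming $\partial T_p\Sigma$ in a neighborhood of $p$. Each of the two sheets of $\partial\Sigma$ is, away from $\partial\mathcal O$, an analytic classical minimal surface; analytic continuation together with the cone property of $\Sigma$ at $0$ extends each sheet to a complete hyperplane through the origin, so $\Sigma$ is the intersection of the two corresponding half-spaces, i.e., a wedge. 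Repeating the argument with $\boldsymbol{e}_2,\dots,\boldsymbol{e}_{n-2}$ completes the dimension reduction and the induction.
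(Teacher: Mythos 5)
Your outline follows the paper for the first part of the inductive step: at a contact point $p=\boldsymbol e_1\in\partial\Sigma$, the blow-up is a cone (by the monotonicity formula, Lemma~\ref{lem.mono}), it splits as $\R\boldsymbol e_1\times\widetilde\Sigma$, and by the inductive hypothesis $\widetilde\Sigma$ is a lower-dimensional wedge, so $T_p\Sigma=\Lambda_{\gamma_1,\theta_1}$; then Proposition~\ref{prop.rhoE_P} and Theorem~\ref{thm.main2} give $C^{1,\alpha}$ regularity of $\partial\Sigma$ up to $\partial\mathcal O$ near such points. That part is fine.

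The genuine gap is exactly in the step you flag as the ``Main obstacle,'' and your proposed fix does not close it. Iterating the improvement of closeness (Theorem~\ref{thm.main}) only gives that $\partial\Sigma$ is a $C^{1,\alpha}$ \emph{perturbation} of the two half-hyperplanes of $\partial T_p\Sigma$ near $p$ --- it does not show that $\partial\Sigma$ \emph{coincides} with a half-hyperplane on any open set. Analytic continuation therefore has nothing to continue: an analytic minimal hypersurface can be $C^{1,\alpha}$-close to a hyperplane near a point without being flat anywhere. Invoking ``the cone property of $\Sigma$ at $0$'' does not rescue this, since the only consequence is that tangent planes at smooth points pass through the origin, which is true for every minimal cone (including the Simons cone) and certainly does not force flatness. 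So the passage from ``$T_p\Sigma$ is a wedge'' to ``$\Sigma$ is a wedge'' remains unproved. The paper closes it with a sliding/barrier argument that is absent from your plan: on each side $\{\pm x_{n-1}\ge 0\}$ one takes the widest wedge $\Lambda^{s^*}$ contained in $\Sigma$, observes the two boundaries must touch on $S^{n-1}$, and then concludes equality via the interior strong maximum principle (if the contact point has $x_{n-1}\neq 0$) or via Hopf's lemma at the edge (if $x_{n-1}=x_n=0$, using the $C^1$-up-to-boundary regularity just established). Iterating this ``opening the wedge'' argument over the connected components of $\Sigma\setminus\partial\mathcal O$ reduces to the planar base case. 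You will need something of this flavor; without it, translation invariance along $\boldsymbol e_1$ is not established. Two smaller remarks: your claim that ``$p\in\mathrm{int}\,\Sigma$ forces $\Sigma=\R^n$'' is not correct as stated (a conical neighborhood of the $\boldsymbol e_1$-axis need not propagate to the whole space); and in the $n=2$ base case you assert the conclusion from minimality without giving the competitor --- the paper's argument uses the convex hull of $\Sigma\cap B_1$, and some such construction is needed.
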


As a direct consequence of the combination of Theorem~\ref{thm.main2} and Proposition~\ref{prop.classif_cones} we obtain the following result (which is just a more precise version of Theorem~\ref{thm.generic} above), 
\begin{cor}\label{cor.cor_2}
Let  $n\ge 2$, and assume that $\mathcal O$ is a minimal surface and that  $\Phi \in C^{k,\beta}$ for some $k\ge 2$ and $\beta\in (0,1)$ --- or equivalently $\partial \mathcal O$ is of class $C^{k,\beta}$. 

Let $E$ be a solution (in the sense of Definition \ref{defi.notionmin}) of \eqref{eq.thepb}-\eqref{eq.thinO} with $x_\circ \in \de E\cap \de\mathcal O\cap B_{1/2}$. Then, for all $\alpha\in\left(0,\frac 1 2\right)$,
$\partial E$ has the following $C^{1,\alpha}$ local structure near $x_\circ$. For $r>0$ small enough, we have either:

\begin{enumerate}
\item[(a)]   In some appropriate coordinates $y = (y',y_n) = (y_1,\dots,y_n)$, $\Phi^{-1}(\partial E)$ is the graph $ \{y_n = h(y')\}$ of a function $h\in C^0(\overline{B_{r}'})$ that belongs to $C^{1,\alpha}(\overline{B_{r}'^+})\cap C^{1,\alpha}(\overline{B_{r}'^-})$, where $B_{r}'$ denotes the ball in $\R^{n-1}$ and $B_{r}'^\pm$ are the half-balls $B_{r}'\cap \{\pm y_{n-1} > 0\}$. Moreover, we have $h\ge 0$ on $y_{n-1} = 0$ and $\nabla h$ is continuous on $\{y_{n-1} = 0\}\cap \{h > 0\}$. 
\end{enumerate}
\noindent or 
\begin{enumerate}
\item[(b)]   $\partial E \cap B_r(x_\circ)$ is the union of two $C^{k,\beta}$ minimal surfaces with boundary that meet on $\partial \mathcal O$ with full contact set in $B_{r}(x_\circ)$.
\end{enumerate}
\end{cor}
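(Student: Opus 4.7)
The plan is to reduce the corollary to Theorem~\ref{thm.main2} via a blow-up argument based on Proposition~\ref{prop.classif_cones}, and then upgrade the regularity in case (b) using Remark~\ref{rem.minsurf}. By Definition~\ref{defi.notionmin}, $E$ is the $L^1_{\mathrm{loc}}$ limit of sets $E_k$ minimizing \eqref{eq.wedgepb} with obstacles $\Phi(\Lambda^{\delta_k})$ and $\delta_k\downarrow 0$. Since the regularity statement is local around $x_\circ$, after a translation we may assume $x_\circ=0$. By the monotonicity formula (Lemma~\ref{lem.mono}) applied at the contact point $0$, the rescaled sets $r^{-1}E$ have uniformly bounded perimeter in every fixed ball and admit, along subsequences $r_j\downarrow 0$, a blow-up limit $\Sigma$ which is a minimizing cone with nonempty boundary (because $0\in\partial E\cap\partial\mathcal O$ and the rescaling sends $\mathcal O$ to $\{x_{n-1}=0,\,x_n\le 0\}$, so $\Sigma$ is a minimizer with respect to the identity $\Phi$). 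By Proposition~\ref{prop.classif_cones}, every such blow-up is a wedge $\Lambda_{\gamma,\theta}$.

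Fix $\alpha\in(0,1/2)$ and let $\eps_\circ$ be the constant of Theorem~\ref{thm.main2}. The blow-up convergence to some wedge $\Lambda_{\gamma,\theta}$ implies that for some sufficiently small $r_0>0$, the rescaled set $r_0^{-1}E$ is $\tfrac{\eps_\circ}{2}$-close to $\Lambda_{\gamma,\theta}$ in $B_1$. Using the change-of-coordinates trick outlined in Remark (5) after Theorem~\ref{thm.main} (and formalized in the lemma mentioned there), we rotate and rescale by a further factor so that, in the new coordinates, the transformed diffeomorphism $\bar\Phi$ satisfies $\bar\Phi(0)=0$, $D\bar\Phi(0)=\mathrm{id}$, and $|D^2\bar\Phi|\le\eps_\circ^{3/2}$; this is possible because $\Phi\in C^{k,\beta}$ with $k\ge 2$ gives quadratic control on $\Phi-\mathrm{id}$ at small scales, and the axis-alignment assumption on $\mathcal O$ is restored by the orthogonal rotation $R_{0}$. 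In these new coordinates, the corresponding rescaled sets are still $\eps_\circ$-close to a (possibly rotated) wedge $\Lambda_{\tilde\gamma,\tilde\theta}$ in $B_1$.

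Now we apply Theorem~\ref{thm.main2} to each $E_k$ for $k$ large enough that the $\eps_\circ$-closeness passes from $E$ to $E_k$ (this uses only $L^1$ convergence together with density/measure-theoretic boundary estimates that follow from uniform perimeter bounds). Since the constants in Theorem~\ref{thm.main2} are independent of $\delta_k$ (see Remark (3) after Theorem~\ref{thm.main}), we obtain, along the sequence $k$, uniform $C^{1,\alpha}$ estimates for $\partial E_k$ in $B_{1/2}$ in either alternative (a) or (b). Because $\chi_{E_k}\to\chi_E$ in $L^1$ and the local structure is either a graph over $\R^{n-1}$ or the union of two graphs over half-spaces, only one alternative can hold for all $k$ large (the alternatives are distinguished by whether the contact set is full in $B_{3/4}$, a condition stable under $L^1$ convergence of sets with uniformly controlled perimeter). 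Passing to the limit in $C^{1,\alpha}$ gives the corresponding structure for $\partial E$ in $B_{1/2}$, which after undoing the rescaling yields the claim in a ball $B_r(x_\circ)$.

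It remains to upgrade the regularity in case (b) from $C^{1,\alpha}$ to $C^{k,\beta}$. Once case (b) is identified, each of the two pieces of $\partial E$ is a minimal surface in the metric induced by $\Phi$, with boundary equal to $\partial\mathcal O$, and the contact set is full in $B_{r}(x_\circ)$. Since $\partial\mathcal O$ is a $C^{k,\beta}$ codimension-two submanifold, standard boundary regularity for minimal surfaces with prescribed $C^{k,\beta}$ boundary applies, giving $C^{k,\beta}$ regularity up to $\partial\mathcal O$ for each of the two sheets, as asserted (this is precisely the content of Remark~\ref{rem.minsurf}). The main obstacle in this plan is the first step of the transfer from $E$ to the approximating $E_k$: one must verify that the qualitative $\eps_\circ$-closeness of $E$ to a wedge is inherited by $E_k$ for large $k$, which requires combining $L^1$ convergence with uniform density estimates at the boundary—these come from the minimizing property and the monotonicity formula of Lemma~\ref{lem.mono}.
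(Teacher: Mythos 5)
Your overall architecture (blow up to a wedge, apply Theorem~\ref{thm.main2} to the approximating minimizers $E_k$ with constants uniform in $\delta_k$, pass to the limit, upgrade case (b) via Remark~\ref{rem.minsurf}) matches the paper's proof. However, there is a genuine gap in your first step: you apply the monotonicity formula (Lemma~\ref{lem.mono}) and the classification of cones (Proposition~\ref{prop.classif_cones}) \emph{directly to the limit set $E$}. Both of these results are stated and proved for sets satisfying \eqref{eq.TOP} with a fixed $\delta>0$, i.e.\ sets that contain the \emph{thick} wedge $\Phi(\Lambda^{\delta})$ and minimize among competitors containing it. The limit $E$ of Definition~\ref{defi.notionmin} is not known to satisfy \eqref{eq.TOP} for any $\delta$: it only contains the thin set $\mathcal O$, and by \eqref{eq.warning} the Caccioppoli perimeter does not see $\mathcal O$, so "minimality among sets containing $\mathcal O$" is the unconstrained Plateau problem and is false for $E$ in general. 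Consequently the monotonicity of $\mathcal A_E$ is not available as stated, and even if one recovers an almost-monotonicity by passing perimeters to the limit along the $E_k$, the resulting blow-up $\Sigma$ of $E$ at $x_\circ$ is only known to contain the measure-zero cone $\{x_{n-1}=0,\,x_n\le 0\}$, not some $\Lambda^{\delta'}$ with $\delta'>0$; hence $\Sigma$ does not verify the hypotheses of Proposition~\ref{prop.classif_cones}, and your claim that "every such blow-up is a wedge" is unsupported. Proving that the blow-up of $E$ opens up around the obstacle is essentially the aperture property that the paper either imports from \cite{Deacu} (Remark~\ref{rem.dg}) or, in the actual proof, sidesteps entirely.

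The paper avoids this by running the blow-up argument at the level of the approximating minimizers: Proposition~\ref{prop.rhoE_P} is applied to each $E_k$ (which does satisfy \eqref{eq.TOP} with $\delta_k>0$, so its rescalings contain $\Lambda^{\delta_k'}$ and the compactness/monotonicity machinery applies), and the key point is that the scale $\rho$ at which $\eps_\circ$-closeness to a wedge is achieved depends only on $n$, $\eps_\circ$ and $\|\Phi\|_{C^{1,1}}$, $\|D\Phi^{-1}\|_{L^\infty}$ --- \emph{not} on $\delta_k$. This removes the need for your transfer step "from $E$ to $E_k$" altogether. To repair your argument you should replace the blow-up of $E$ by an application of Proposition~\ref{prop.rhoE_P} to the $E_k$ at contact points $x_k\in\partial E_k\cap\partial\mathcal O$ converging to $x_\circ$ (the existence of such $x_k$ is another small point you gloss over, since Theorem~\ref{thm.main2} requires the origin to be a contact point of $E_k$ itself). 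The remaining steps of your proposal --- uniformity of the alternative (a)/(b) for large $k$, passage to the limit in $C^{1,\alpha}$, and the $C^{k,\beta}$ upgrade in case (b) --- are consistent with the paper.
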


\begin{rem}
\label{rem.dg}
By \cite[Theorem 2.1 and Theorem 2.2]{Deacu} (or by a standard barrier argument similar to that used in Hopf's lemma) if one considers a minimizer of the De Giorgi measure for obstacles as in Corollary \ref{cor.cor_2}, then its boundaries do not stick to 
the obstacle. More precisely, they present an aperture around the obstacle that allows, locally, a wedge contained in the minimizer.

As a consequence, minimizers of the  De Giorgi measure are
 locally (in a neighborhood of any contact point) minimizers in the sense of  Definition~\ref{defi.notionmin}.  Therefore, Corollary~\ref{cor.cor_2} above applies to minimizers in the sense of De Giorgi. 
\end{rem}

\begin{rem}
\label{rem.cor_2}
In the previous statement the condition that $\mathcal{O}$ is a minimal surface appears only to be able to apply Remark~\ref{rem.minsurf} and obtain (b). Otherwise, an analogous result with $C^{1,1-}$ regularity holds. 
\end{rem}

\begin{rem}
\label{rem.vanishes}
We observe that,  as a consequence of our results,
\begin{equation}\label{aioahiofa}
\mbox{$E$ is a minimizer as in Definition \ref{defi.notionmin}}\quad \Rightarrow \quad   P_{DG}(E;B_1) =P(E;B_1).
\end{equation}

Indeed, let $E$ be a minimizer as in Definition \ref{defi.notionmin}. First,  as proven in   \cite{Deacu}, since $\mathcal O$ is smooth, the De Giorgi perimeter $P_{DG}$ of the minimizer can be expressed as
\begin{equation}\label{hasiofhwaoih}
P_{DG}(F; B_1) = P(F; B_1) + 2\mathcal{H}^{n-1}((\mathcal{O}\setminus F)\cap B_1) \ge P(F; B_1)\quad \mbox{for any Borel set  }F.
\end{equation}

But note that $\partial E$ cannot stick to the obstacle from both sides at any point of $\mathcal O\setminus \partial \mathcal O$ by the strong maximum principle. Hence, 
\begin{equation}\label{hasiofhwaoih0}
\mathcal{H}^{n-1}((\mathcal{O}\setminus E)\cap B_1)=0.
\end{equation}
Using \eqref{hasiofhwaoih} and \eqref{hasiofhwaoih0}, $E$ is therefore also a minimizer of $P_{DG}$, since $P_{DG}(F; B_1)\ge P(F; B_1)\ge P(E; B_1)=P_{DG}(E;B_1)$ for any competitor $F$.

\end{rem}

\begin{rem}
Corollary~\ref{cor.cor_2} gives the regularity of the hypersurface around contact points. The regularity around other points follows from the classical theory for minimal surfaces (see for instance chapters 8 and 9 of the classical book of Giusti \cite{Giu84}). Note that this is result only up to dimension 7  \cite{Sim68} since nonsmooth minimizers exist in dimensions 8 and higher \cite{BdGG69}. In contrast, our regularity result holds around the contact set of the thin obstacle, in any dimension. 
\end{rem}

\begin{rem}
After a previous version of this manuscript, a preprint of Focardi and Spadaro \cite{FS} appeared in which the authors establish optimal $C^{1,1/2}$ regularity estimates and rectifiability of the free boundary for minimal surfaces with flat thin obstacles in the nonparametric case (that is, in our notation, for the case $\Phi = {\rm id}$ and  assuming that $\partial E$ is a graph in the $n$-th direction).
Interestingly, our Corollary \eqref{cor.cor_2} gives that (at least for flat obstacles) the assumptions of \cite{FS} are always satisfied near any contact point by parametric minimal surfaces with thin obstacles. Thus, when combined with our results, the results in \cite{FS} yield that solutions to parametric thin obstacle problems are $C^{1,1/2}$ near the obstacle and their free boundary is rectifiable.
\end{rem}

\subsection{Organization of the paper}

The paper is organised as follows. 

In Section~\ref{sec.2} we introduce some notation, definitions, and preliminary results. In Section~\ref{sec.3} we construct a barrier and prove the dichotomy presented in the introduction: if the solution is close to a wedge, then either $\de E$ is very flat or its contact set is full in a smaller ball. In Section~\ref{sec.4} we focus on the flat configuration, showing the improvement of closeness result in this case (Proposition~\ref{prop.impclos}). In Section~\ref{sec.5}, instead, we focus on the full contact set configuration, which allows us to complete the proof of our first main result, Theorem~\ref{thm.main}. In Section~\ref{sec.6} we prove Theorem~\ref{thm.main2} by iteratively applying Theorem~\ref{thm.main}. Finally, in Section~\ref{sec.7} we discuss blow-ups (monotonicity formula and  classification of minimal cones) and we complete the proofs of  Proposition~\ref{prop.classif_cones} and Corollary~\ref{cor.cor_2}, thus obtaining Theorem~\ref{thm.generic}.
\\

{\it Acknowledgement 1:} This work has received funding from the European Research Council (ERC) under the Grant Agreement No 721675. The second author was also supported by the Swiss National Science Foundation (Ambizione grant PZ00P2\_180042).\\

{\it Acknowledgement 2:}  we thank  M. Focardi, G. De Philippis, and E. Spadaro, for interesting discussions on the topic of the paper. We would also like to deeply thank Connor Mooney for pointing out to us the key observation that yields the classification of  minimal cones in every dimension (see Proposition~\ref{prop.classif_cones}).

\section{Notation and preliminary results}
\label{sec.2}
\subsection{Conventions and notation.}
As it is standard, throughout the paper we will assume that the representative of $E$ among sets that differ from it by a null set is such that topological and measure theoretic boundary agree. 
That is, given a set $E \subset \R^n$, we will say that $x\in \R^n$ belongs to the boundary of $E$, $x\in \de E$, whenever 
\[
0<|E\cap B_r(x) |< |B_r(x)|,\quad\mbox{ for all } r > 0.
\]

Notice that, in general, this is not necessarily true. However, the set of points where this does not hold is of measure zero, and therefore we can consider instead the equivalent set $\tilde E$ that arises from removing all such points. Thus, without loss of generality, we will always assume that the measure theoretic and topological boundary agree. 

The notation introduced in Subsections~\ref{ssec.blowup} and \ref{ssec.defsol} will be recurrent throughout the work. In particular, the definitions of $\Lambda_{\gamma,\theta}$ and $\Lambda^\delta$ from \eqref{eq.deflambda0}-\eqref{eq.deflambdad} as well as the definition of $e_w$ and the conditions on the constants $\theta$ and $\gamma$ (see \eqref{eq.eandtheta}). See also Figure~\ref{fig.lambda0}. 

On the other hand, when not stated otherwise, we add a superscript prime to an element or set in $\R^n$ to denote its projection to $\R^{n-1}$; and we proceed similarly with a double superscript prime projection to $\R^{n-2}$. Thus, if $x = (x_1,\dots,x_n) \in \R^n$, we can also denote $x = (x', x_n) \in \R^{n-1}\times\R$ or $x = (x'', x_{n-1}, x_n)\in \R^{n-2}\times\R\times\R$. Similarly, $B_1$ denotes the unit ball in $\R^n$, $B_1'$ is the unit ball in $\R^{n-1}$ and $B_1''$ in $\R^{n-2}$. We may sometimes write $B_1'\subset \R^n$, or $x'\in \R^n$ as an abuse of notation, meaning $B_1'\times\{0\}\subset \R^n$ and $(x', 0)\in \R^n$ respectively. 

\subsection{Preliminary results}

\begin{defi}
Let $E\subset \R^n$. We say that $E$ is a \emph{minimizer of the $\delta$-thin obstacle problem in} $B_1\subset\R^n$ if $\Phi(\Lambda^{\delta} ) \cap B_1\subset E$ and 
\eqref{eq.TOP} holds.
\end{defi}

We are also interested in the notion of super- and subsolutions to the minimal perimeter problem. Thus, the follow definition will also be useful. 

In general terms, we say that a set $E^+$ is a supersolution to the minimal perimeter problem when compact additive perturbations to $E^+$ in $B_1$ produce sets of larger perimeter. Similarly, $E^-$ is a subsolution to the minimal perimeter problem when compact subtractive perturbations to $E^-$ in $B_1$ increase the perimeter. 
\begin{defi}
\label{defi.sssolution}
Let $E^\pm \subset \R^n$. Then, $E^+$ is a \emph{supersolution} in $B$ if 
\[
P (F^+; B) \geq P (E^+; B),
\]
for any $F^+$ with $E^+\subset F^+$ and $\overline{F^+\setminus E^+}\Subset B$.

Analogously, $E^-$ is a \emph{subsolution} in $B$ if 
\[
P(F^-; B) \geq P(E^-; B),
\]
for any $F^-$ with $E^-\supset F^-$ and $\overline{E^-\setminus F^-}\Subset B$.
\end{defi}

Notice that, in particular, a set satisfying \eqref{eq.TOP} is a supersolution to the minimal perimeter problem.

\begin{prop}
Given $E_\circ\subset \R^n$ with $P(E_\circ;B_1)<\infty$, there exists $E$ satisfying \eqref{eq.TOP} with $E\setminus B_1= E_\circ\setminus B_1$. 
\end{prop}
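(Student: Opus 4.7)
The plan is a direct method of the calculus of variations, carried out for the constrained perimeter problem \eqref{eq.TOP}. The three standard ingredients are: exhibit a competitor to see the infimum is finite; extract an $L^{1}$-limit of a minimizing sequence by $BV$-compactness; and verify that both the exterior boundary condition and the obstacle inclusion pass to the limit, after which lower semicontinuity of the perimeter closes the argument.

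First I would fix the admissible class
\[
\mathcal{A} := \bigl\{ F \subset \R^n \text{ measurable} :\ F\setminus B_1 = E_\circ \setminus B_1,\ \Phi(\Lambda^{\delta})\cap B_1 \subset F \bigr\},
\]
and observe it is non-empty: the competitor $F_0 := E_\circ \cup \Phi(\Lambda^{\delta})$ trivially satisfies the two constraints, and since $\Phi$ is a $C^{1,1}$ diffeomorphism and $\Lambda^{\delta}$ is a Lipschitz wedge, $\Phi(\Lambda^{\delta})$ has locally finite perimeter. Subadditivity of the relative perimeter under unions then gives $P(F_0;B_1)\le P(E_\circ;B_1)+P(\Phi(\Lambda^{\delta});B_1)<\infty$. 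Hence $m := \inf_{F\in\mathcal{A}} P(F;B_1)$ is finite.

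Next I would pick a minimizing sequence $F_k\in\mathcal{A}$ with $P(F_k;B_1)\to m$. To obtain compactness I work on a slightly enlarged ball $B_2$: since $F_k = E_\circ$ on $B_2\setminus\overline{B_1}$, one has a uniform $BV(B_2)$ bound
\[
P(F_k;B_2) \le P(F_k;B_1) + P(E_\circ;B_2\setminus\overline{B_1}) + \mathcal{H}^{n-1}(\partial B_1).
\]
By the standard $BV$-compactness theorem, a subsequence (not relabeled) satisfies $\chi_{F_k}\to \chi_E$ in $L^{1}(B_2)$ for some measurable set $E$, and pointwise a.e.\ along a further subsequence.

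Finally I would check admissibility and optimality of $E$. The identity $E\setminus B_1 = E_\circ\setminus B_1$ is immediate from $L^1$-convergence and the fact that every $F_k$ agrees with $E_\circ$ outside $B_1$. The obstacle inclusion $\chi_{F_k}\ge \chi_{\Phi(\Lambda^{\delta})\cap B_1}$ a.e.\ passes to the a.e.\ limit, so $\Phi(\Lambda^{\delta})\cap B_1\subset E$ up to a null set; by the convention fixed in Subsection 2.1 we may modify $E$ on this null set so that the containment holds literally. Lower semicontinuity of the relative perimeter under $L^1_{\mathrm{loc}}$-convergence then gives $P(E;B_1)\le \liminf_k P(F_k;B_1) = m$, so $E\in\mathcal{A}$ achieves the infimum and is the desired minimizer. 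The only mildly delicate point is the choice of enlargement used to extract $BV$-compactness without distorting the boundary data; once this is arranged the proof is routine, and no serious obstacle is expected.
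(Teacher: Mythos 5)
Your argument is correct and follows exactly the route the paper indicates (and leaves implicit): the direct method, with $BV$-compactness and lower semicontinuity of the perimeter, plus the observation that the exterior datum and the obstacle constraint are stable under $L^1$/a.e.\ convergence. The only cosmetic remark is that the enlargement to $B_2$ is unnecessary (and would require knowing $P(E_\circ;B_2\setminus\overline{B_1})<\infty$, which is not among the hypotheses): the compactness theorem for sets of uniformly bounded perimeter in $B_1$ already gives $\chi_{F_k}\to\chi_E$ in $L^1(B_1)$, which is all that is needed since everything is fixed outside $B_1$.
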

\begin{proof}
The proof follows by classic methods in the calculus of variations. Lower semi\-continuity and compactness in $L^1$ of BV functions directly yield the result (see \cite[Thm 1.9, Thm 1.19]{Giu84}).
\end{proof}

\begin{prop}
\label{prop.minper}
Let $E\subset \R^n$ satisfying \eqref{eq.TOP}. Then, for any $B_r(x_\circ) \subset B_1$, $E$ is a \emph{supersolution} in $B_r(x_\circ)$. Moreover, if $B_r(x_\circ) \cap \Phi(\Lambda^{\delta} ) = \varnothing$, then $E$ is a set of minimal perimeter in $B_r(x_\circ)$.
\end{prop}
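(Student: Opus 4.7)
Both statements follow directly from unwinding the definitions and the classical trick of \emph{localizing} a minimality inequality by subtracting common contributions from where the competitor agrees with $E$. My plan is to treat (i) and (ii) in parallel, in both cases producing a competitor in the sense of \eqref{eq.TOP} and then restricting the resulting perimeter inequality from $B_1$ to $B_r(x_\circ)$.

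\textbf{Step 1: verifying admissibility of the competitor.} For the supersolution statement, let $F^+$ be such that $E\subset F^+$ and $\overline{F^+\setminus E}\Subset B_r(x_\circ)\subset B_1$. Since $F^+$ coincides with $E$ outside a compact subset of $B_r(x_\circ)$, in particular $F^+\setminus B_1=E\setminus B_1$. Moreover, $\Phi(\Lambda^\delta)\cap B_1\subset E\subset F^+$, so $F^+$ is an admissible competitor in \eqref{eq.TOP}. For the minimality statement under $B_r(x_\circ)\cap \Phi(\Lambda^\delta)=\varnothing$, take any $F$ with $\overline{E\triangle F}\Subset B_r(x_\circ)$. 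Outside $B_r(x_\circ)$ one has $F=E$, so $\Phi(\Lambda^\delta)\cap(B_1\setminus B_r(x_\circ))\subset E\cap(B_1\setminus B_r(x_\circ))=F\cap(B_1\setminus B_r(x_\circ))$; inside $B_r(x_\circ)$ the obstacle is empty by hypothesis. Thus $\Phi(\Lambda^\delta)\cap B_1\subset F$ and $F\setminus B_1=E\setminus B_1$, so $F$ is admissible in \eqref{eq.TOP}.

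\textbf{Step 2: localizing the inequality.} In both cases, \eqref{eq.TOP} gives $P(E;B_1)\le P(\,\cdot\,;B_1)$ evaluated on the chosen competitor. Since the competitor agrees with $E$ on the open set $B_1\setminus \overline{B_{r'}(x_\circ)}$ for some $r'<r$ (chosen so that $\overline{F^+\setminus E}$, resp.\ $\overline{E\triangle F}$, lies in $B_{r'}(x_\circ)$), the relative perimeters on this annular region coincide. Moreover, the competitor and $E$ agree in a neighbourhood of $\partial B_{r'}(x_\circ)$, so the singular contribution of $\partial B_{r'}(x_\circ)$ to the perimeter in $B_1$ is the same for both. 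Subtracting the equal pieces reduces the inequality to
\[
P(E;B_{r'}(x_\circ))\le P(G;B_{r'}(x_\circ)),
\]
where $G$ stands for $F^+$ or $F$. Finally, since the competitor also agrees with $E$ in $B_r(x_\circ)\setminus \overline{B_{r'}(x_\circ)}$, the same subtraction argument upgrades the inequality to $P(E;B_r(x_\circ))\le P(G;B_r(x_\circ))$, which is precisely the statement.

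\textbf{Expected difficulty.} There is no real obstacle: the proof is essentially bookkeeping. The only point that requires mild care is step 2, where one must be sure that the perimeters agree in the transition region including $\partial B_{r'}(x_\circ)$; this is handled by choosing $r'<r$ with $\overline{F^+\setminus E}$ (respectively $\overline{E\triangle F}$) compactly contained in $B_{r'}(x_\circ)$, so that the competitor coincides with $E$ in a full neighbourhood of $\partial B_{r'}(x_\circ)$ and no singular contribution on this sphere can differ.
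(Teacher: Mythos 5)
Your proof is correct and follows the same route as the paper, whose proof of this proposition is simply the one-line remark that it follows from the definitions; you have merely written out the standard admissibility check and localization bookkeeping explicitly. No issues.
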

\begin{proof}
This just follows from the definitions of minimizer of the $\delta$-thin obstacle problem \eqref{eq.TOP} and supersolution.
\end{proof}

\begin{lem}
\label{lem.minvis}
If $E$ is a local minimizer of the perimeter around a point $x_\circ\in \de E$, then $\de E$ satisfies the mean curvature equation
\[
M(D^2 v , \nabla v) := (1+|\nabla v|^2)\Delta v - (\nabla v)^T D^2 v \nabla v = 0
\]
in the viscosity sense. That is, if we define for any smooth $\varphi: B_1' \to \R$, 
\[
S^{\pm}_\varphi := \{\pm x_n < \varphi(x')\},
\]
then, if $S^\pm_\varphi$ is included in either $E$ or $E^c$ in some ball $B_r(x_\circ)$ and $x_\circ \in \de S^\pm_\varphi$, we have that 
\begin{equation}
\label{eq.Mvis}
\pm M(D^2 \varphi , \nabla \varphi) \leq 0.
\end{equation}
Moreover, if $E$ is a supersolution to the minimal perimeter problem around $x_\circ\in \de E $, then if $S^\pm_\varphi$ is included in $E$ in some ball $B_r(x_\circ)$ and $x_\circ \in \de S^\pm_\varphi$ we have the same result, \eqref{eq.Mvis}.
\end{lem}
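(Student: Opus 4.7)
By symmetry (applying the argument to $-\varphi$ or $E^c$), it suffices to treat the case $S^+_\varphi \subset E$ in $B_r(x_\circ)$ with $x_\circ \in \partial E \cap \partial S^+_\varphi$, and $E$ only a supersolution (a minimizer is a supersolution). We want to show $M(D^2\varphi, \nabla\varphi)(x_\circ') \leq 0$. The statement is the viscosity form of the strong maximum principle for the mean curvature equation associated with the perimeter functional; the proof proceeds by reducing to a smooth touching point and invoking ellipticity.

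\textbf{Smooth case.} If $\partial E$ is smooth near $x_\circ$, parametrised locally as $y = u(x')$, the supersolution property forces the outward mean curvature to satisfy $M(D^2 u, \nabla u)(x_\circ') \leq 0$ (sphere-type sign convention: outward $H\leq 0$). The touching $\varphi \leq u$ with $\varphi(x_\circ') = u(x_\circ')$ gives $\nabla\varphi(x_\circ') = \nabla u(x_\circ')$ and $D^2\varphi(x_\circ') \leq D^2 u(x_\circ')$ as symmetric matrices. Since $M$ is elliptic (monotone increasing in the Hessian variable for fixed gradient),
\[
M(D^2\varphi, \nabla\varphi)(x_\circ') \leq M(D^2 u, \nabla u)(x_\circ') \leq 0.
\]

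\textbf{Barrier and sliding.} For general $\partial E$ we argue by contradiction, assuming $M(D^2\varphi, \nabla\varphi)(x_\circ') > 0$. By continuity, $M \geq \kappa > 0$ on $\overline{B_\rho'(x_\circ')}$ for small $\rho$. Fix a smooth bump $\chi \geq 0$ supported in $B_\rho'(x_\circ')$ with $\chi(x_\circ') > 0$, and set $\tilde\varphi := \varphi + \epsilon\chi$ for small $\epsilon > 0$, so that $M(D^2\tilde\varphi, \nabla\tilde\varphi) \geq \kappa/2$ on $\overline{B_\rho'(x_\circ')}$, $\tilde\varphi \equiv \varphi$ outside $B_\rho'(x_\circ')$, and $\tilde\varphi(x_\circ') > x_{\circ, n}$. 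Consider the sliding family $\tilde\varphi_t := \tilde\varphi - t$. For $t = \epsilon\|\chi\|_\infty$, $\tilde\varphi_t \leq \varphi$, so $S^+_{\tilde\varphi_t} \cap (B_\rho' \times \R) \subset S^+_\varphi \subset E$; for $t = 0$, since $x_\circ \in \partial E$ forces $E^c$ to accumulate at $x_\circ$ (using the convention that measure-theoretic and topological boundaries agree) and $\tilde\varphi(x_\circ') > x_{\circ, n}$ places such points inside $S^+_{\tilde\varphi}$, the inclusion fails. Continuity in $t$ therefore produces a critical $t^* \in (0, \epsilon\|\chi\|_\infty)$ at which the inclusion is just barely preserved, with an interior touching point $p \in \partial E$ on the smooth graph of $\tilde\varphi_{t^*}$ with $p' \in B_\rho'(x_\circ')$, and with $M(D^2\tilde\varphi_{t^*}, \nabla\tilde\varphi_{t^*})(p') \geq \kappa/2 > 0$.

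\textbf{Contradiction.} At $p$, the smooth set $S^+_{\tilde\varphi_{t^*}}$ lies inside $E$, touches $\partial E$ at $p$, and has strict mean curvature sign $M > 0$. We contradict the supersolution property by a competitor: perturb $\tilde\varphi_{t^*}$ further upward near $p'$ to $\hat\varphi := \tilde\varphi_{t^*} + s\eta$, and set $F := E \cup S^+_{\hat\varphi}$. Then $F \supset E$, $\overline{F \setminus E} \Subset B_r(x_\circ)$, so $F$ is admissible in Definition~\ref{defi.sssolution}. Extending the upward unit normal $\nu := (-\nabla\tilde\varphi_{t^*}, 1)/\sqrt{1+|\nabla\tilde\varphi_{t^*}|^2}$ independently of $x_n$ to a neighborhood of $p$ gives $|\nu|=1$ and $\mathrm{div}(\nu) = -M(\tilde\varphi_{t^*})/(1+|\nabla\tilde\varphi_{t^*}|^2)^{3/2}$ of constant sign. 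Applying the calibration inequality and the divergence theorem to $F\triangle E$ and combining with the first-variation decrease of the graph area from $\tilde\varphi_{t^*}$ to $\hat\varphi$ (which is quantitatively controlled by $M(\tilde\varphi_{t^*})\geq \kappa/2$) yields $P(F; B_r) < P(E; B_r)$ for $s$ sufficiently small, contradicting the supersolution property. The principal technical obstacle is making this quantitative perimeter comparison precise despite the possible non-smoothness of $\partial E$ at $p$; this is handled by standard finite-perimeter/reduced-boundary techniques, with the strict lower bound on $M$ providing the necessary quantitative gap.
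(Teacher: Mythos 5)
The paper itself gives no proof here beyond calling the argument ``very standard'' and citing \cite{CC93}, so the question is whether your expansion correctly reproduces the standard viscosity comparison argument, and in broad outline it does: reduce to $S^+_\varphi\subset E$, treat a smooth touching via ellipticity of $M$, and in general build a bumped competitor and derive a perimeter-decreasing contradiction by a calibration/divergence-theorem argument. The sliding construction of $t^*$ and the identification of a touching point $p\in\partial E$ with $p'\in B_\rho'$ are correctly set up (using the convention that measure-theoretic and topological boundaries agree).

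The one step that needs repair is the final ``Contradiction'' paragraph. You extend to a full neighborhood the unit normal field $\nu$ of the graph of $\tilde\varphi_{t^*}$, but then compare against the competitor $F=E\cup S^+_{\hat\varphi}$ whose relevant boundary piece is the graph of $\hat\varphi=\tilde\varphi_{t^*}+s\eta$, which is \emph{not} a level set of the foliation generated by $\tilde\varphi_{t^*}$. On that graph one only has $\nu\cdot n_F\le 1$, which is the wrong direction for the calibration estimate, and the appeal to a ``first-variation decrease of the graph area'' does not fill the gap: the strict perimeter inequality is supposed to come from the divergence theorem alone. The cleanest way to make this airtight is to take the competitor $F:=E\cup S^+_{\tilde\varphi_{t^*}-s}$ (i.e.\ a constant downward shift, so that the added graph piece is an exact leaf of the foliation and $\nu$ is its unit normal) and apply the divergence theorem to $\Omega:=\big(S^+_{\tilde\varphi_{t^*}-s}\setminus E\big)\cap B_r(x_\circ)$; since ${\rm div}\,\nu<0$ on $\Omega$ one obtains
\[
\mathcal H^{n-1}\big(\partial^* E\cap \overline\Omega\big)>\mathcal H^{n-1}\big(\partial S^+_{\tilde\varphi_{t^*}-s}\cap\partial\Omega\big),
\]
hence $P(F;B_r)<P(E;B_r)$, contradicting the supersolution property. (Equivalently one may keep your $\hat\varphi$ but build $\nu$ from $\hat\varphi$ itself, which still has ${\rm div}\,\nu<0$ for $s$ small.) Finally, the sliding is actually dispensable: since $x_\circ\in\partial E$ forces $E^c$ to accumulate at $x_\circ$ while $\chi(x_\circ')>0$, the competitor $E\cup S^+_{\varphi+\epsilon\chi}$ already works directly, with the calibration built from $\varphi+\epsilon\chi$; the set $\big(S^+_{\varphi+\epsilon\chi}\setminus E\big)\cap B_r$ is automatically compactly contained in $B_r$ because $S^+_\varphi\subset E$ there and $\chi$ is supported in $B_\rho'$.
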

\begin{proof}
The proof is very standard, just using the definitions of minimal perimeter and supersolution and noticing that we can decrease the perimeter if the conclusion does not hold. See, for example, \cite{CC93}. 
\end{proof}


\begin{lem} \label{lem.asumpPhi}
Let $\Phi:\R^n \rightarrow \R^n$ be any $C^{1,1}$ diffeomorphism and let $x_\circ$ belong to $\partial \mathcal O = \Phi(\{ x_{n-1}= x_n=0\})$.  Assume that $[\Phi]_{C^{1,1}} \le M$ and $|D(\Phi^{-1})(x_\circ)| \le M$.
Then, for $\rho > 0$, there are new coordinates $\bar x = \psi_{x_\circ}(x)$
\[ \psi_{x_\circ}(x) :=  \rho^{-1}R_{x_\circ} (x -x_\circ), \quad \mbox{where $R_{x_\circ}$ is an orthogonal matrix}, \]
and a new $C^{1,1}$ diffeomorphism $\bar \Phi$, such that
\[
 \bar\Phi(\Lambda^{\bar\delta}) = \psi_{x_\circ}(\Phi(\Lambda^\delta))  \quad \mbox{for some }\bar \delta \in (0, C\delta)
\]
and 
\[
\bar \Phi( 0 ) = 0, \quad \bar \Phi( 0 ) = {\rm id}, \quad \mbox{and} \quad |D^2 \bar \Phi |\le  CM^3\rho,
\]
where $C$ depends only on $n$.
\end{lem}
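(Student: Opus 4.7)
The plan is to construct $\bar\Phi$ explicitly as a linear rescaling of $\psi_{x_\circ}\circ\Phi$. Setting $y_\circ := \Phi^{-1}(x_\circ)$, I define
\[
 \bar\Phi(\bar y) := \psi_{x_\circ}\bigl(\Phi(y_\circ + A^{-1}\bar y)\bigr),
\]
where $A := D(\psi_{x_\circ}\circ\Phi)(y_\circ) = \rho^{-1}R_{x_\circ}\,D\Phi(y_\circ)$. Then $\bar\Phi(0) = \psi_{x_\circ}(x_\circ) = 0$, and the chain rule gives $D\bar\Phi(0) = AA^{-1} = \mathrm{id}$. Since $y_\circ\in\Phi^{-1}(\partial\mathcal O) = \{x_{n-1}=x_n=0\}$ lies on the edge of $\Lambda^\delta$, one has $y_\circ+\Lambda^\delta = \Lambda^\delta$, and therefore $\bar\Phi(A\Lambda^\delta) = \psi_{x_\circ}(\Phi(\Lambda^\delta))$. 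The desired identity $\bar\Phi(\Lambda^{\bar\delta})=\psi_{x_\circ}(\Phi(\Lambda^\delta))$ thus reduces to the linear-algebra problem of choosing the orthogonal matrix $R_{x_\circ}$ so that the image $A\Lambda^\delta$ is a wedge of the form $\Lambda^{\bar\delta}$ with $\bar\delta\in(0,C\delta)$.

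To accomplish this I would use the intrinsic geometry of $\mathcal O$ at $x_\circ$. Let $\nu$ be a unit normal to the tangent hyperplane $T_{x_\circ}\mathcal O := D\Phi(y_\circ)(\{x_{n-1}=0\})$; let $\mu$ be the unit vector in $T_{x_\circ}\mathcal O\cap T_{x_\circ}\partial\mathcal O^\perp$ oriented so that $\mu\cdot D\Phi(y_\circ)\mathbf{e}_n>0$; and let $\tau_1,\dots,\tau_{n-2}$ be an orthonormal frame of the $(n-2)$-plane $T_{x_\circ}\partial\mathcal O := D\Phi(y_\circ)(\{x_{n-1}=x_n=0\})$. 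Define $R_{x_\circ}$ as the orthogonal matrix sending $\tau_k\mapsto\mathbf{e}_k$, $\nu\mapsto\mathbf{e}_{n-1}$, $\mu\mapsto\mathbf{e}_n$ (with signs chosen so that the ensuing diagonal entries of $A$ are positive). A direct verification shows that $A$ preserves as sets the hyperplane $\{x_{n-1}=0\}$, the $(n-2)$-plane $\{x_{n-1}=x_n=0\}$, and the half-hyperplane $\{x_{n-1}=0,\,x_n\le 0\}$, and this forces $A$ to have block-triangular structure in the splitting $\R^n=\{x_{n-1}=x_n=0\}\oplus\mathrm{span}(\mathbf{e}_{n-1},\mathbf{e}_n)$, whose $2\times 2$ lower-right block sends the axis $-\mathbf{e}_n$ of $W^\delta$ to a positive multiple of $-\mathbf{e}_n$. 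A computation of how this block acts on the two rays $s\mapsto s(\pm\tan\delta,-1)$, $s\ge 0$, bounding $W^\delta$ in the $(x_{n-1},x_n)$-plane then identifies $A\Lambda^\delta$ with a symmetric wedge $\Lambda^{\bar\delta}$ whose half-opening satisfies $\tan\bar\delta=(a/d)\tan\delta$, with $a$ and $d$ the two relevant diagonal entries of $A$.

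For the quantitative bounds, these entries equal $a=\rho^{-1}(D\Phi(y_\circ)\mathbf{e}_{n-1}\cdot\nu)$ and $d=\rho^{-1}(D\Phi(y_\circ)\mathbf{e}_n\cdot\mu)$; both lie in $[c/(M\rho),\,CM/\rho]$, the nontrivial lower bound on $d$ following from
\[
\mathrm{dist}\bigl(D\Phi(y_\circ)\mathbf{e}_n,\,T_{x_\circ}\partial\mathcal O\bigr)\ \ge\ \frac{\mathrm{dist}(\mathbf{e}_n,\{x_{n-1}=x_n=0\})}{|D\Phi^{-1}(x_\circ)|}\ \ge\ \frac{1}{M},
\]
and analogously for $a$. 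Hence $a/d\le CM^2$ and $\bar\delta\le C\delta$. The second-derivative bound follows immediately from the chain rule $D^2\bar\Phi(\bar y) = \rho^{-1}R_{x_\circ}\,D^2\Phi(y_\circ+A^{-1}\bar y)\,[A^{-1},A^{-1}]$, combined with $|R_{x_\circ}|=1$, $|D^2\Phi|\le M$, and $|A^{-1}|=\rho|D\Phi^{-1}(x_\circ)|\le \rho M$, giving $|D^2\bar\Phi|\le CM^3\rho$. The main obstacle I expect is precisely the geometric step of aligning $R_{x_\circ}$ with the adapted frame $(\tau_k,\mu,\nu)$ at $x_\circ$: a generic orthogonal rotation would leave $A\Lambda^\delta$ as a \emph{skew} wedge whose bisector is not $-\mathbf{e}_n$, an obstruction that cannot be removed by merely redefining $\bar\delta$, and that the above geometric choice is tailored to eliminate.
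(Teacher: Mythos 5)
Your overall strategy is the same as the paper's (which is very terse on this point): conjugate $\psi_{x_\circ}\circ\Phi$ by the inverse of its differential at the base point so that $\bar\Phi(0)=0$, $D\bar\Phi(0)={\rm id}$, and reduce everything to choosing $R_{x_\circ}$ so that the linear image $A\Lambda^\delta$ is a \emph{symmetric} wedge $\Lambda^{\bar\delta}$. Your verification of $\bar\Phi(0)=0$, $D\bar\Phi(0)={\rm id}$, the translation invariance $y_\circ+\Lambda^\delta=\Lambda^\delta$, and the bound $|D^2\bar\Phi|\le \rho^{-1}\,|D^2\Phi|\,|A^{-1}|^2\le CM^3\rho$ are all correct and match the paper.

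However, the step you yourself flag as the crux contains an error. With $R_{x_\circ}$ aligned to the frame $(\tau_k,\nu,\mu)$, the $2\times2$ block of $A$ in the $(x_{n-1},x_n)$-plane is only \emph{lower triangular}, $B=\begin{pmatrix} a & 0\\ b & d\end{pmatrix}$ with $b=\rho^{-1}\,\mu\cdot D\Phi(y_\circ)\boldsymbol e_{n-1}$, and this shear entry does not vanish in general (take $\Phi(x)=x+x_{n-1}\boldsymbol e_n$, for which $\nu=\boldsymbol e_{n-1}$, $\mu=\boldsymbol e_n$, $R_{x_\circ}={\rm id}$ and $b=\rho^{-1}\neq 0$). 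Then $B(\pm\tan\delta,-1)=(\pm a\tan\delta,\ \pm b\tan\delta-d)$, and these two rays make \emph{different} angles, $\arctan\bigl(a\tan\delta/(d\mp b\tan\delta)\bigr)$, with $-\boldsymbol e_n$; the image is a skew wedge $\Lambda_{\gamma',\theta'}$ with $\gamma'\neq 0$ (of size $O(\delta^2)$, but nonzero), not $\Lambda^{\bar\delta}$, and your formula $\tan\bar\delta=(a/d)\tan\delta$ fails. The fix is to adapt $R_{x_\circ}$ not to the tangent/normal frame of $\mathcal O$ but to the image wedge $D\Phi(y_\circ)\Lambda^\delta$ itself: this wedge is bounded by the two hyperplanes with normals $u=(D\Phi(y_\circ))^{-T}e_{\pm(\pi/2-\delta)}$, and one chooses $R_{x_\circ}$ to send the two unit normals $u/|u|$, $v/|v|$ to $e_{\pi/2-\bar\delta}$, $e_{-\pi/2+\bar\delta}$, where $\pi-2\bar\delta=\angle(u,v)$ (equivalently, to align the angle bisector of the planar section of the image wedge with $-\boldsymbol e_n$). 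This is always possible for an orthogonal map, gives $A\Lambda^\delta=\Lambda^{\bar\delta}$ exactly, and still yields $\bar\delta\le C\delta$ by your quantitative estimates; the price is that $R_{x_\circ}$ then depends on $\delta$ as well as on $x_\circ$, and no longer maps $T_{x_\circ}\mathcal O$ exactly onto $\{x_{n-1}=0\}$ --- it differs from your choice by an $O(\delta^2)$ rotation in the plane $(T_{x_\circ}\partial\mathcal O)^{\perp}$. The remainder of your argument is unaffected by this correction.
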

\begin{proof}
Let us choose $R_{x_\circ}$ to be some orthogonal matrix to be chosen and define 
\[
A_{x_\circ} := R_{x_\circ} D\Phi (\Phi^{-1}(x_\circ)\big).
\]

Choose $R_{x_\circ}$ and $\bar \delta\in (0, C\delta)$ such that 
\[    A_{x_\circ}(\Lambda^\delta) = \Lambda^{\bar\delta} \]
as a consequence the set 
\[  \{x_{n-1} = 0, x_n\le 0\}  \quad \mbox{is invariant under the linear map }A_{x_\circ}. \]
Now define 
\[
\Phi^{x_\circ}:=  R_{x_\circ} \left( \Phi( \Phi^{-1}(x_\circ)+ A_{x_\circ}^{-1}x) -x_\circ\right) \quad  \mbox{and}\quad \bar \Phi:= \rho^{-1} \Phi^{x_\circ}(\rho x).
\]
Note that since  $\Phi^{-1}(x_\circ)\in \{x_{n-1}=x_n=0\}$  we have $\Phi^{-1}(x_\circ)+ A_{x_\circ}^{-1} \Lambda^{\bar \delta} = \Lambda^{\delta}$  and thus
\[\bar \Phi(\Lambda^{\bar \delta}) = \psi_{x_\circ}( \Phi( \Phi^{-1}(x_\circ)+ A_{x_\circ}^{-1} \Lambda^{\bar\delta}) ) = \psi_{x_\circ}(\Phi(\Lambda^\delta)) .\]

By construction, we have $ \bar \Phi(0) = 0$, $D \bar \Phi (0) = {\rm id}$, and $[ \bar \Phi]_{C^{1,1}} \le  CM^3\rho$.
\end{proof}

\section{Barriers and dichotomy}
\label{sec.3}
For this section let us start by defining the mean curvature operator $H$, on functions $\varphi:\R^{n-1}\to\R$ as
\begin{equation}
\label{eq.H}
H\varphi = {\rm div}\left(\frac{\nabla \varphi}{\sqrt{1+|\nabla\varphi|^2}}\right) = (1+|\nabla \varphi|^2)^{-\frac 3 2 }M(D^2\phi, \nabla \varphi).
\end{equation}
We start by introducing a supersolution that will be used as barrier. 

\begin{lem}[Supersolution]
\label{lem.supersolution}
Let $\beta \in \left(0, \frac{1}{10(n-2)}\right)$. Let
\[
\begin{split}
S^+_\beta & := \big\{x = (x'', x_{n-1}, x_n) \in B_1 \subset \R^{n-2}\times\R\times\R: \\
& ~~~~~~~~~~~~~~~~~~x_n \leq \varphi_\beta(x') := \beta\left(|x''|^2 - 2(n-2)x_{n-1}^2\right)\big\}
\end{split}
\]
Then, $S^+_\beta$ is a strict supersolution to the equation of minimal graphs in $B_1$, and 
\[
H\varphi_\beta \le -c\beta,\quad\textrm{in }~ B_1',
\]
for some positive constant $c$ depending only on $n$. 
\end{lem}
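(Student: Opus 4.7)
The proof is a direct computation of the mean curvature of the graph $\{x_n=\varphi_\beta(x')\}$, showing it is strictly negative and quantitatively bounded above by $-c\beta$. All the insight is in the ansatz: the single sharply negative eigenvalue $-4\beta(n-2)$ in the $x_{n-1}$-direction is tuned to overpower the $(n-2)$ positive eigenvalues $2\beta$ in the $x''$-directions, while keeping $|\nabla\varphi_\beta|$ small in $B_1'$.

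Step 1 (derivatives). First I would compute
\[
\nabla\varphi_\beta(x')=2\beta\bigl(x'',\,-2(n-2)x_{n-1}\bigr),\qquad D^2\varphi_\beta=2\beta\,\mathrm{diag}\bigl(I_{n-2},\,-2(n-2)\bigr),
\]
so that $\Delta\varphi_\beta\equiv -2\beta(n-2)$. In $B_1'$ we have $|\nabla\varphi_\beta|^2\le 4\beta^2\bigl(1+4(n-2)^2\bigr)$, which under $\beta\le 1/(10(n-2))$ is bounded by an absolute constant smaller than $1$.

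Step 2 (the numerator $M$). Since $D^2\varphi_\beta$ is diagonal, $(\nabla\varphi_\beta)^{\top}D^2\varphi_\beta\nabla\varphi_\beta=8\beta^3|x''|^2-64\beta^3(n-2)^3x_{n-1}^2$. Plugging into formula \eqref{eq.H},
\[
M(D^2\varphi_\beta,\nabla\varphi_\beta)=-2\beta(n-2)\bigl(1+|\nabla\varphi_\beta|^2\bigr)-8\beta^3|x''|^2+64\beta^3(n-2)^3 x_{n-1}^2.
\]
In $B_1'$ the first summand is $\le -2\beta(n-2)$, the second is $\le 0$, and the third is $\le 64\beta^3(n-2)^3=32\beta^2(n-2)^2\cdot 2\beta(n-2)$. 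Since $\beta(n-2)\le 1/10$, the factor $32\beta^2(n-2)^2$ is at most $1/3$, so the positive correction is dominated by the leading negative term, giving
\[
M(D^2\varphi_\beta,\nabla\varphi_\beta)\le -\beta(n-2).
\]

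Step 3 (conclusion). Dividing by $(1+|\nabla\varphi_\beta|^2)^{3/2}$, a quantity bounded from above by a constant depending only on $n$, yields $H\varphi_\beta\le-c\beta$ in $B_1'$, with $c=c(n)>0$. This strict inequality $H\varphi_\beta<0$ is exactly the statement that the graph of $\varphi_\beta$ is a strict supersolution of the minimal surface equation, and hence $S^+_\beta$ is a strict supersolution to the minimal perimeter problem in the sense of Definition~\ref{defi.sssolution}. Indeed, the upward unit normal $\nu(x')=(1+|\nabla\varphi_\beta|^2)^{-1/2}(-\nabla\varphi_\beta,1)$, extended to $B_1$ independently of $x_n$, satisfies $|\nu|\le 1$ and $\operatorname{div}\nu=-H\varphi_\beta\ge c\beta$. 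For any competitor $F\supset S^+_\beta$ with $\overline{F\setminus S^+_\beta}\Subset B_1$, the divergence theorem then gives $P(F;B_1)-P(S^+_\beta;B_1)\ge c\beta\,|F\setminus S^+_\beta|>0$.

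\emph{Main obstacle.} There is essentially none: the lemma reduces to elementary calculus once the ansatz $\varphi_\beta=\beta(|x''|^2-2(n-2)x_{n-1}^2)$ is in hand. All the content is in the choice of the coefficient $-2(n-2)$, which ensures that $\Delta\varphi_\beta$ stays negative of order $\beta$ while all nonlinear corrections in the mean curvature operator remain of order $\beta^3$ and are therefore absorbed once $\beta(n-2)$ is small.
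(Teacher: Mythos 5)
Your proof is correct and follows essentially the same approach as the paper: a direct computation of $H\varphi_\beta$ exploiting the same ansatz, with the only cosmetic difference being that you bound the numerator $M(D^2\varphi_\beta,\nabla\varphi_\beta)$ term-by-term whereas the paper phrases the same estimate via the ellipticity constants of the coefficient matrix $U_{ij}$. The calibration argument you add at the end for the supersolution property is a nice explicit justification of a step the paper treats as standard and leaves implicit.
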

\begin{proof}
Let us check that, given $\varphi_\beta$, then 
\[
H\varphi_\beta \leq -c\beta.
\]

Let us rewrite the operator $H$,
\[
H\varphi_\beta (x') = \frac{1}{\sqrt{1+|\nabla \varphi_\beta|^2}}\left(\Delta \varphi_\beta - \frac{(\nabla \varphi_\beta)^T D^2 \varphi_\beta \nabla \varphi_\beta}{1+|\nabla \varphi_\beta|^2}\right) (x') = \sum_{i, j}U_{ij}(x') \de_{ij}\varphi_\beta(x'),
\]
where 
\[
U_{ij}(x') := \frac{1}{\sqrt{1+|\nabla \varphi_\beta|^2}} \left(\delta_{ij} - \frac{\de_i \varphi_\beta(x') \de_j \varphi_\beta(x')}{1+|\nabla \varphi_\beta|^2}\right).
\]

Let $S_\varphi(x')= \sqrt{1+|\nabla \varphi_\beta|^2}$. Note that, $U(x') = S_\varphi^{-1}(x') \left({\rm Id} - \bar \varphi_\beta \bar \varphi_\beta^T\right)$, where $\bar \varphi_\beta(x') = \nabla \varphi_\beta(x') / S_\varphi(x')$. The only eigenvalue of ${\rm Id} - \bar \varphi_\beta \bar \varphi_\beta^T$ different from 1 is $1-\|\bar \varphi_\beta\|^2$. Let $m_\varphi = \sup\{|\nabla \varphi_\beta|\}$, where the supremum is taken over the domain of definition of $\varphi_\beta$. Putting all together we have obtained that $U$ is uniformly elliptic, with ellipticity constants $\lambda_\varphi = (1+m_\varphi^2)^{-3/2}$ and 1.

Notice then that
\[
H\varphi_\beta(x') = \sum_{i, j}U_{ij}(x') \de_{ij}\varphi_\beta(x') \leq \beta\left(2(n-2)  - 4(n-2)\lambda_\varphi\right),\quad\textrm{in } B_1'. 
\]

On the other hand, from the fact that $|\nabla \varphi|\leq 4\beta(n-2)$ in $B_1'$,
\begin{equation}
\label{eq.lambdaphi}
\lambda_\varphi = (1+m^2_\varphi)^{-3/2} \geq (1+16\beta^2(n-2)^2)^{-3/2} .
\end{equation}

Putting all together, we get the desired result. 
\end{proof}

The following lemma shows that whenever the minimizer is not flat, then the contact set is full in the interior. The condition of flatness is used via the angle $\theta$ from the definition of the wedge $\Lambda_{{\gamma}, \theta}$: being flat means that $\theta$ is small, when compared to $\eps$.

\begin{lem}
\label{lem.cneps}
There exists $\eps_\circ$ and $C_\circ$ depending only on $n$ such that the following statement holds: 

Let $E\subset\R^n$ satisfying \eqref{eq.TOP} be such that it is $\eps$-close to some $\Lambda_{\gamma, \theta}$ in $B_1$, for some $\eps\in (0, \eps_\circ)$, and \eqref{eq.Phihyp} holds. Suppose that $\theta \in \big[C_\circ\eps, \frac{\pi}{2}\big)$. Then
\[
E \subset \Phi(\Lambda_{{\gamma}, \theta - C_\circ\eps})\quad\mbox{ in } B_{1/2}.
\]
In particular, the contact set is full in $B_{1/2}$.
\end{lem}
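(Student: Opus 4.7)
The proof is by a sliding-barrier argument using the strict supersolution $\varphi_\beta$ of Lemma~\ref{lem.supersolution}, applied to each face of $\Lambda_{\gamma,\theta}$. First, $|D^2\Phi|\le\eps^{3/2}$ together with $\Phi(0)=0$, $D\Phi(0)=\mathrm{id}$ yields $\|\Phi-\mathrm{id}\|_{C^1(B_1)}=O(\eps^{3/2})\ll\eps$, so I may argue as if $\Phi=\mathrm{id}$ and absorb the resulting discrepancy into the final constant $C_\circ$. By symmetry it suffices to prove that, in $B_{1/2}$, the set $E$ lies on the wedge side of the widened face $\{e_{\gamma+\theta-C_\circ\eps}\cdot x=0\}$ of $\Lambda_{\gamma,\theta-C_\circ\eps}$; the face with outer normal $e_{\gamma-\theta}$ is treated identically, and intersecting the two half-space inclusions yields $E\cap B_{1/2}\subset\Lambda_{\gamma,\theta-C_\circ\eps}$.

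To this end, rotate coordinates so that $u:=e_{\gamma+\theta}\cdot x$ is ``vertical'' and $v:=e_{\gamma+\theta-\pi/2}\cdot x$ is the orthogonal coordinate in $\mathrm{span}(e_{n-1},e_n)$, with $x''\in\R^{n-2}$ along $\partial\mathcal O=\{u=v=0\}$. The narrow face $F^+$ is $\{u=0\}$; the widened face is $\{u\cos(C_\circ\eps)+v\sin(C_\circ\eps)=0\}$; the obstacle $\mathcal O$ lies on the ray $\{v=\tan\theta\,u,\,u\le 0\}$; and the $\eps$-closeness gives $E\subset\{u\le\eps\}$ in $B_1$. Fix $\beta>0$ small and dimensional, and a constant $C_1>C_\circ$ to be chosen (also dimensional); consider the tilted barrier
\[
\Psi_a(x'',v):=a-\tan(C_1\eps)\,v+\varphi_\beta(x'',v),\qquad a\in\R.
\]
Since the linear tilt has vanishing Hessian, the computation of Lemma~\ref{lem.supersolution} (cf.~\eqref{eq.lambdaphi}) still yields $H\Psi_a\le-c\beta<0$ provided $\eps_\circ$ is sufficiently small in terms of $\beta$, $n$, and $C_1$.

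Now slide: for $a$ large, $E\cap\overline{B_{3/4}}\subset\{u\le\Psi_a\}$; decrease $a$ to its smallest value $a_\star$ compatible with this inclusion, so the graph of $\Psi_{a_\star}$ touches $\partial E$ at some $x_\star\in\overline{B_{3/4}}\cap\partial E$. By Proposition~\ref{prop.minper}, $E$ is a local minimizer of the perimeter on $B_{3/4}\setminus\Phi(\Lambda^\delta)$, and Lemma~\ref{lem.minvis} prohibits the touching at an interior point there (it would force $M(D^2\Psi_{a_\star},\nabla\Psi_{a_\star})\ge 0$, contradicting $H\Psi_{a_\star}<0$). Since $\Phi(\Lambda^\delta)\subset E$ forces $\partial E\cap\Phi(\Lambda^\delta)\subset\partial\Phi(\Lambda^\delta)$, and $\partial\Phi(\Lambda^\delta)$ lies in an $O(\delta)$-neighborhood of $\mathcal O$, the remaining possibilities are $x_\star\in\partial B_{3/4}$ or $x_\star$ near $\mathcal O$. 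On $\partial B_{3/4}$, the $\eps$-closeness together with the opposite-face constraint of $\Lambda_{\gamma,\theta}$ bound the relevant combination of $u(x_\star)$ and $v(x_\star)$, giving $a_\star\le C\eps$. Near $\mathcal O$, the obstacle-ray parametrization $(u,v)=(s,\tan\theta\,s)$ together with the confinement $|s|\le\tfrac{3}{4}\cos\theta$ (the $\cos\theta$ factor offsetting the potentially large $\tan\theta$) again gives $a_\star\le C\eps$, uniformly as $\delta\downarrow 0$ and in $\theta\in[C_\circ\eps,\pi/2)$.

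Plugging back, $u\le\Psi_{a_\star}\le C\eps-\tan(C_1\eps)v+\varphi_\beta$ on $E\cap B_{1/2}$. Since $|\varphi_\beta|\le C\beta$ in $B_{1/2}$, rearranging yields $u+\tan(C_\circ\eps)v\le C\eps+C\beta-(\tan(C_1\eps)-\tan(C_\circ\eps))v$; taking $C_1-C_\circ$ sufficiently large (dimensional) and $\beta\sim\eps$, the negative slope term absorbs the positive error for $v$ above a small threshold $v_0$, and for $v<v_0$ the direct combination of $\eps$-closeness with the obstacle-ray geometry (valid since $\theta\ge C_\circ\eps$) gives the same inequality. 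Hence $e_{\gamma+\theta-C_\circ\eps}\cdot x\le 0$ throughout $E\cap B_{1/2}$; the symmetric argument on the other face completes $E\cap B_{1/2}\subset\Lambda_{\gamma,\theta-C_\circ\eps}$, and restoring $\Phi$ gives the claim. Fullness of the contact set is then immediate, since $\partial\mathcal O$ is the edge of $\Phi(\Lambda_{\gamma,\theta-C_\circ\eps})$ and lies in $\mathcal O\subset E$ while being arbitrarily approached by points of $E^c$. The main technical obstacle is the uniform control of $a_\star$ when $\theta$ is close to $\pi/2$ and as $\delta\downarrow 0$, together with the joint calibration of $\beta$, $C_1$, and $\eps_\circ$ that keeps $\Psi_a$ a strict supersolution while closing the final rearrangement; the hypothesis $\theta\ge C_\circ\eps$ is precisely what keeps $\Lambda_{\gamma,\theta-C_\circ\eps}$ non-degenerate.
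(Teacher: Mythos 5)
Your overall strategy --- sliding the quadratic supersolution of Lemma~\ref{lem.supersolution} along a face of the wedge, excluding interior touching by the supersolution/minimizer comparison of Lemma~\ref{lem.minvis}, and controlling the touching on the sphere and on the obstacle --- is the same as the paper's. The problem is in the final step. Your barrier only gives $u\le a_\star-\tan(C_1\eps)v+\varphi_\beta$ on $E$ with $a_\star\le C\eps$, and this additive $O(\eps)$ error is fatal near the edge $\{u=v=0\}$: the rearranged inequality $u+\tan(C_\circ\eps)v\le C\eps+C\beta-(\tan(C_1\eps)-\tan(C_\circ\eps))v$ yields $u+\tan(C_\circ\eps)v\le0$ only when $v\ge v_0:=C/(C_1-C_\circ)$, a fixed dimensional constant independent of $\eps$. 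On the complementary region $\{v<v_0\}$ --- which contains all of $\partial\mathcal O\cap B_{1/2}$ and is exactly where the conclusion has content --- your fallback fails: $\eps$-closeness does not exclude from $E$ a point such as $x=\frac{\eps}{2}\,e_{\gamma+\theta}$, i.e.\ $(u,v,x'')=(\eps/2,0,0)$, which lies in $\Lambda^{\eps}_{\gamma,\theta}$, has $v=0<v_0$, and satisfies $u+\tan(C_\circ\eps)v=\eps/2>0$. No ``direct combination of $\eps$-closeness with the obstacle-ray geometry'' can rule such points out; doing so is precisely the nontrivial content of the lemma, since near the edge the target wedge $\Lambda_{\gamma,\theta-C_\circ\eps}$ exceeds $\Lambda_{\gamma,\theta}$ only by $O(\eps\,|x|)\ll\eps$. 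Consequently the fullness of the contact set, which you deduce from the inclusion at the edge, is also not established.

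The paper removes the additive error by re-centering: for each $x_\circ\in\partial\mathcal O\cap B_{1/2}$ it slides the barrier, in a frame rotated by $\theta-\arctan(\tilde C\eps)$, down to the height $t=0$ at which the barrier passes exactly through $x_\circ$. The sliding cannot stop earlier because for $t>0$ there is no interior touching, no touching on the sphere (this is the boundary inequality \eqref{eq.aaaa}, which is what fixes $\beta\sim\eps$ and $\tilde C$), and no touching on the obstacle; and it cannot go below $t=0$ because $\Phi(\Lambda^\delta)\subset E$ pins the barrier at $x_\circ$. Since $\varphi_\beta$ vanishes together with its gradient at the center, this yields the face inclusion through $x_\circ$ with no $O(\eps)$ slack, and letting $x_\circ$ range over $\partial\mathcal O\cap B_{1/2}$ gives both $E\subset\Phi(\Lambda_{\gamma,\theta-C_\circ\eps})$ and the fullness of the contact set simultaneously. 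To repair your argument you would need to replace the single globally positioned barrier by this family of barriers anchored at the points of $\partial\mathcal O$; the rest of your touching-point analysis would then carry over essentially unchanged.
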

\begin{proof}
Let us prove this result, for simplicity, in the case $\Phi \equiv {\rm id}$, and at the end of the proof we discuss how to modify it in order to account for small second order perturbations. 

We will slide an appropriate supersolution from above until we intersect with the surface $\de E$. 

Take $x_\circ \in B_{1/2}''\times\{0\}\times\{0\}$, and by making a translation let us assume $x_\circ$ is the origin. Let us also rotate the setting with respect to the last two coordinates so that the angle between $e_\gamma$ and $\boldsymbol{e}_n$ is $\angle(e_\gamma, \boldsymbol{e}_n) = \theta- \arctan(\tilde C\eps)$, for some constant $\tilde C$ depending only on $n$ to be chosen, such that $\theta > \arctan(\tilde C\eps)$. Let us denote $e_\gamma^r$, $\de E^r$, $\de \Lambda_{{\gamma}, \theta}^r$, and $(\Lambda^{\delta})^r$, the corresponding rotated versions. The following argument can be done with both configurations that fulfil this property, so let us assume without loss of generality that we are in a situation where 
\begin{equation}
\label{eq.lss1}
\{x_n = -\tilde C\eps x_{n-1}\} \cap \{x_{n-1}\geq 0\}\subset \de \Lambda_{{\gamma}, \theta}^r,\quad\mbox{ in } B_{1/2}.
\end{equation}
See Figure~\ref{fig.super} for a representation of this rotated situation, and the whole proof. 

\begin{figure*}[t]
\centering
\includegraphics[width = 16cm]{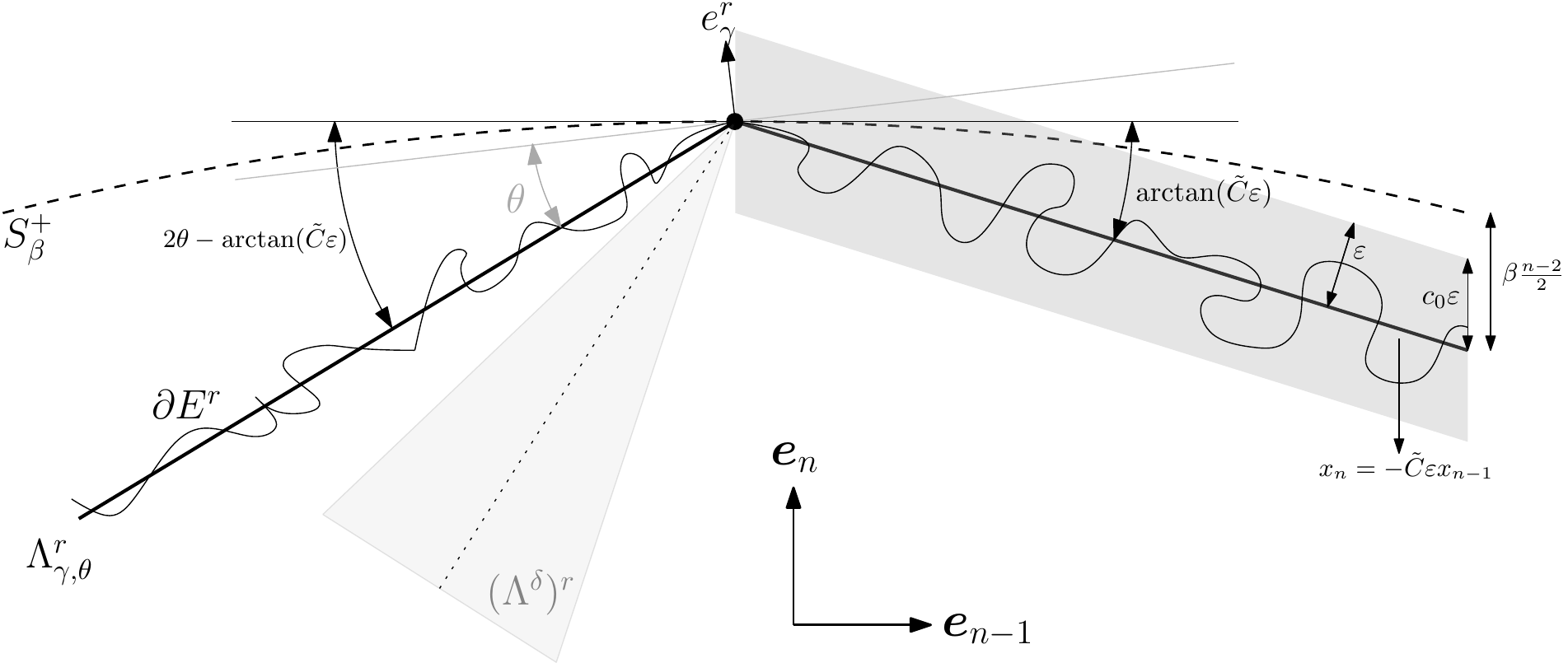}
\caption{Representation of the setting in Lemma~\ref{lem.cneps} after a rotation.}
\label{fig.super}
\end{figure*}

Take the supersolution $S^+_\beta$ from Lemma~\ref{lem.supersolution}. Slide $\de S^+_\beta$ from above until it touches the boundary of the minimizer of the $\delta$-thin obstacle problem, $\de E^r$. That is, define 
\[
S^{t}_\beta := \de S_\beta^+ + t\boldsymbol{e}_n,
\]
and consider 
\[
m_\beta := \inf \{t > 0: S^{t}_\beta\cap \de E^r\cap B_{1/2} \neq \varnothing\}.
\]
We recall that
\[
\de S_\beta^+ = \left\{x = (x'', x_{n-1}, x_n) \in B_1: x_n = \beta\left(|x''|^2 - 2(n-2)x_{n-1}^2\right)\right\}.
\]

If $m_\beta>0$ and $x^m = (x_1^m,\dots,x_n^m)\in B_{1/2}$ is such that $x_m \in S^{m_\beta}_\beta\cap \de E^r\cap B_{1/2}$, then $x^m$ cannot be an interior point to $S_\beta^{m_\beta}\cap B_{1/2}$. Indeed, since $S^{m_\beta}_\beta \cap B_{1/2}\cap \{x_{n-1} = 0\}\subset \{x_n \geq m_\beta > 0\}$ is strictly above zero, then thanks to Proposition~\ref{prop.minper} $\de E^r$ is a surface of minimal perimeter around $x_m$. On the other hand, $S^{m_\beta}_\beta$ is a supersolution, touching on an interior point with a surface of minimal perimeter locally, which is not possible. 

We will show that the boundary $\de B_{1/2} \cap S^{m_\beta}_\beta$ is always \emph{above} $\de E^r$ in the $\boldsymbol{e}_n$ direction. From \eqref{eq.lss1} and using that $\partial E^r \subset \Lambda_{{\gamma}, \theta}^r +B_\eps$, it is enough to show that there exists $\tilde C$ depending only on $n$ such that
\begin{equation}\label{eq.aaaa}
\beta\left(|x''|^2 - 2(n-1)x_{n-1}^2\right) \geq -\tilde C\eps x_{n-1} + c_0 \eps,\quad \mbox{for } x'= (x'', x_{n-1}) \in \de B_{1/2}',
\end{equation}
for some constant $c_0$ depending only on $n$ that accounts for the difference in distance between the Hausdorff distance and the distance in the $\boldsymbol{e}_n$-direction. For \eqref{eq.aaaa} to be satisfied, using $|x''|^2 = \frac 1 4 - (x_{n-1})^2$, we want
\[
-\beta(2n-1)x_{n-1}^2 + \tilde C\eps x_{n-1} \geq -\frac{\beta}{4} + c_0\eps,\quad\mbox{ for }x_{n-1}\in [0, 1/2].
\]
By taking $\beta = 4c_0 \eps$ and $\tilde C = 2c_0(2n-1)$ the previous condition holds, and notice that for $\eps $ small enough (depending only on $n$) $S^+_\beta$ is a supersolution as wanted. 

Thus, for $\beta = 4c_0 \eps$ and $\tilde C = 2c_0 (2n-1)$, we can slide $S^t_\beta$ until $t = 0$, where it touches $\de E^r$ at the origin (since it touches $(\Lambda^{\delta})^r$ there). Therefore, the origin is a contact point, and moreover, $\de E^r$ is contained in $S^+_\beta\cap \{x_{n-1} \geq 0\}$. In particular, since the origin was a translation of any point in $B_{1/2}''\times\{0\}\times\{0\}$, we have that in $B_{1/2}''\times\{0\}\times\{0\} \cap \{x_{n-1}\geq 0\}$, $\de E^r$ is contained in $\{x_n \leq 0\}$.

Rotating back, and putting $\arctan(\tilde C\eps) = C_\circ \eps$ for some $C_\circ$ depending only on $n$, we obtain the desired result from one side. Doing the same on the other side completes the proof. 

If $\Phi\not\equiv{\rm id}$, we can proceed similarly using that $|D^2\Phi|\le \eps^{1+\frac12}$. Indeed, if $E$ is $\eps$-close to $\Lambda_{{\gamma},\theta}$, then $\Phi^{-1}(E)$ is $2\eps$-close to $\Lambda_{{\gamma},\theta}$ for $\eps$ small enough depending only on $n$. Now we can repeat the previous argument with $\Phi^{-1}(E)$ instead of $E$. The only place where we used that $E$ satisfies \eqref{eq.TOP} is to check that we cannot touch at an interior point when sliding the supersolution (using the previous notation, to check that $m_\beta$ cannot be strictly positive). 

If we were touching at an interior point $x_m$ in this case, then $E$ would be a surface of minimal perimeter around $\Phi(x_m)$. Since we can choose $\beta = 4c_0\eps$ to avoid contact in the boundary, thanks to Lemma~\ref{lem.supersolution} the mean curvature of $\de S^{m_\beta}_\beta$ is below $-4c\eps$. Consequently, the mean curvature of $\Phi(\de S^{m_\beta}_\beta)$ is below $-4c\eps + c'\eps^{1+\frac12}$ and for $\eps$ small enough $\Phi(S_\beta^{m_\beta})$ is still a supersolution: there cannot be an interior tangential contact point. 
\end{proof}

Lemma \ref{lem.cneps} shows that if if $E$ is $\eps$-close to some wedge $\Lambda_{{\gamma},\theta}$ in $B_1$ with $\theta \ge C_\circ \eps$ then we have $E\subset\Phi(\Lambda_{\gamma, \theta- C_\circ \eps})$. As a counterpart, the following lemma shows that   $\Phi(\Lambda_{\gamma, \theta+ C_\circ \eps})\subset E$ --- even for $\theta<C_\circ \eps$.
\begin{lem}
\label{lem.subslem}
There exists $\eps_\circ$ and $C_\circ$ depending only on $n$ such that the following statement holds:

Let $E\subset\R^n$ satisfying \eqref{eq.TOP} be such that it is $\eps$-close to some $\Lambda_{{\gamma}, \theta}$ in $B_1$, for some $\eps\in (0, \eps_\circ)$ and $\theta \in \big[0, \frac{\pi}{2}-C_\circ\eps\big)$. Suppose that $\Phi$ satisfies \eqref{eq.Phihyp}. Then 
\[
\Phi(\Lambda_{{\gamma}, \theta + C_\circ\eps})\subset E \quad\mbox{ in } B_{1/2}.
\]
\end{lem}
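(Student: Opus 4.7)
The plan is to mirror Lemma~\ref{lem.cneps} with the roles of the two one-sided notions swapped: we slide a strict \emph{subsolution} from inside $E$ (from below $\partial E$) until it touches $\partial E$, and we exploit that $E$ is a supersolution in all of $B_1$ (Proposition~\ref{prop.minper}) rather than that $\partial E$ is minimal off the obstacle. As in Lemma~\ref{lem.cneps}, we first treat $\Phi\equiv{\rm id}$ and handle general $\Phi$ at the end by the same $|D^2\Phi|\le\eps^{1+1/2}$ perturbation argument.

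Fix $x_\circ\in B_{1/2}''\times\{0\}\times\{0\}\subset\partial\mathcal O$, translate it to the origin, and rotate in the $(\boldsymbol e_{n-1},\boldsymbol e_n)$-plane by an angle of modulus $\arctan(\tilde C\eps)$, with $\tilde C=\tilde C(n)$ to be chosen, in the direction \emph{opposite} to the one used in Lemma~\ref{lem.cneps}, so that one face of $\partial\Lambda^r_{\gamma,\theta+C_\circ\eps}$ coincides with the half-hyperplane $\{x_n=\tilde C\eps x_{n-1},\ x_{n-1}\ge 0\}$. As barrier take
\[
\psi_\beta(x'):=-\varphi_\beta(x')=\beta\bigl(2(n-2)x_{n-1}^2-|x''|^2\bigr),
\]
where $\varphi_\beta$ is the supersolution function of Lemma~\ref{lem.supersolution}. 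Since $H(-\varphi)=-H\varphi$ we get $H\psi_\beta\ge c\beta>0$ on $B_1'$, i.e.\ the set $\{x_n\le\psi_\beta\}$ is a strict subsolution to the minimal surface equation; note also that $\psi_\beta(0)=0$ and $\nabla\psi_\beta(0)=0$.

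Consider the sliding family $G^s:=\{x_n=\psi_\beta(x')+s\}$ and let
\[
m_\beta:=\inf\bigl\{\,s\in\R\,:\,G^s\cap\partial E^r\cap B_{1/2}\neq\varnothing\,\bigr\}\le 0,
\]
the first height at which $G^s$ meets $\partial E^r$ from below as $s$ increases. With the choices $\beta=c_0\eps$ and $\tilde C=\tilde C(n)$ made as in the proof of Lemma~\ref{lem.cneps} (but in the reflected frame), the graph $G^0$ stays strictly below the $\eps$-neighbourhood of $\partial\Lambda^r_{\gamma,\theta}$ on $\partial B_{1/2}$, so the first contact cannot take place on $\partial B_{1/2}$ for $s\le 0$. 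If $m_\beta<0$, the corresponding contact point $x^\ast\in G^{m_\beta}\cap\partial E^r$ must therefore lie in the interior of $B_{1/2}$, and for every $s<m_\beta$ the whole subgraph $\{x_n<\psi_\beta+s\}\cap B_{1/2}$ is contained in $E^r$. Passing to the limit $s\uparrow m_\beta$, the subgraph $\{x_n<\psi_\beta+m_\beta\}$ is locally contained in $E^r$ with $x^\ast\in\partial\{x_n<\psi_\beta+m_\beta\}$, so the ``moreover'' clause of Lemma~\ref{lem.minvis}, applicable because $E^r$ is a supersolution everywhere in $B_1$, yields $M(D^2\psi_\beta,\nabla\psi_\beta)(x^\ast)\le 0$, contradicting $H\psi_\beta\ge c\beta>0$. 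Hence $m_\beta=0$.

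Once $m_\beta=0$ is established, $G^s\cap\partial E^r\cap B_{1/2}=\varnothing$ for every $s<0$, and taking $s\uparrow 0$ gives $\{x_n<\psi_\beta(x')\}\cap B_{1/2}\subset E^r$; combined with $\psi_\beta(0)=|\nabla\psi_\beta(0)|=0$ and the obstacle inclusion $\Phi(\Lambda^\delta)\cap B_{1/2}\subset E^r$, this controls $\partial E^r$ at the origin along one face of $\Lambda^r_{\gamma,\theta+C_\circ\eps}$. Rotating back and absorbing the $O(\eps)$ quadratic error into the constant $C_\circ$, together with the symmetric argument for the opposite face and the standard perturbation argument for general $\Phi$, will complete the proof. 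The main technical obstacle, and the essential new point compared to Lemma~\ref{lem.cneps}, is that the interior contact point $x^\ast$ might land on $\partial\mathcal O$, where $\partial E$ is not a smooth minimal surface; this is precisely why one must use the viscosity inequality for $E$ as a supersolution (valid everywhere in $B_1$) rather than local minimality of $\partial E$ in the complement of the obstacle.
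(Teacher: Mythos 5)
Your plan captures the right mechanism for handling an \emph{interior} contact point: a strict subsolution touched from above by $\partial E$ contradicts Lemma~\ref{lem.minvis} via the supersolution property from Proposition~\ref{prop.minper}, regardless of whether the contact lies on $\partial\mathcal O$. That part is sound and you are right that it is what replaces the ``local minimality'' used in Lemma~\ref{lem.cneps}. However, the proof has a genuine gap in the step where you rule out boundary contact, and this is in fact the step the paper's argument is built around.

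The claim that ``the graph $G^0$ stays strictly below the $\eps$-neighbourhood of $\partial\Lambda^r_{\gamma,\theta}$ on $\partial B_{1/2}$'' is false whenever $\theta$ is not of order $\eps$. After the rotation that brings the face $\Gamma_{\gamma,\theta}$ to slope $\tilde C\eps$ in $\{x_{n-1}\ge 0\}$, the \emph{other} face of the wedge drops steeply: in the rotated frame its graph is $x_n \approx -\tan(\pi-2\theta-O(\eps))\,|x_{n-1}|$, which on $\partial B_{1/2}'$ is far below the value $\psi_\beta(x')=O(\beta)=O(\eps)$ of your barrier whenever $\theta$ is bounded away from $0$ and $\pi/2$. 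Hence for $s$ only moderately negative the subgraph $\{x_n<\psi_\beta+s\}$ already contains points outside $E^r$, so the inclusion $\{x_n<\psi_\beta+s\}\cap B_{1/2}\subset E^r$ that you want to propagate by continuity in $s$ never holds in the first place, and the sliding argument cannot even start. (Relatedly, your description of the rotation as having ``modulus $\arctan(\tilde C\eps)$'' is off: to flatten $\Gamma_{\gamma,\theta}$ one must rotate by roughly $\gamma+\theta+O(\eps)$, which is not small unless $\gamma+\theta$ is already close to $\pi/2$.)

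The paper circumvents exactly this obstruction by a dichotomy on the angle $\theta_1$ between $\Gamma_{\gamma,\theta}$ and the obstacle face $\partial\Lambda^{\delta}\cap\{x_{n-1}\ge 0\}$. If $\theta_1\le C_1\eps$, then $\Lambda^{\delta}\subset E$ already forces $\Lambda_{\gamma,\theta+C_\circ\eps}\subset E$ in $\{x_{n-1}\ge 0\}$ with no barrier argument at all. If $\theta_1\ge C_1\eps$, the inclusion $\Lambda^{\delta}\subset E$ is used to separate the two sides of the wedge, so that the sliding is confined to (the rotated image of) $\{x_{n-1}\ge 0\}$ and never sees the steep face; the constraint $C_1\eps\ge 3\arctan(\tilde C\eps)$ guarantees precisely that the barrier cannot reach the region $\{x_{n-1}\le 0\}$ during the slide. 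Your proposal omits this dichotomy and the use of the obstacle as a separating barrier, and without them the boundary check on $\partial B_{1/2}$ fails for a large range of admissible $\theta$.
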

\begin{proof}

The proof follows very similarly to the previous result, Lemma~\ref{lem.cneps}. Again, as before, we assume $\Phi \equiv {\rm id}$; and the proof can be adapted to the case $|D^2\Phi|\le \eps^{1+\frac12}$ following analogously to the proof of Lemma~\ref{lem.cneps}. 

We want to show that we can \emph{open} $\Lambda^{\delta} $ up to being at an angle proportional to $\eps$ from $\Lambda_{{\gamma}, \theta}$. Let us show it for $x_{n-1} \geq 0$.

The fact that $\Lambda^{\delta}  \subset E$ in $B_1$ allows us to establish a separation between $x_{n-1} \geq 0$ and $x_{n-1}\leq 0$.

Consider the surface $\de E \cap \{x_{n-1}\geq 0\}$. Let $\theta_1$ be the angle between $\de \Lambda_{{\gamma}, \theta}$ and $\de \Lambda^{\delta} $ in $\{x_{n-1}\geq 0\}$. If $\theta_1 \leq C_1 \eps$ for some $C_1$ depending only on $n$ we are already done, since $\Lambda^{\delta} $ is already a barrier; so that we can suppose that $\theta_1 \geq C_1\eps$ for some $C_1$ to be determined. We denote $\Gamma_{\gamma, \theta} = \de \Lambda_{{\gamma}, \theta} \cap \{x_{n-1} \geq 0\}$.

Now, as in Lemma~\ref{lem.cneps}, we rotate the setting in the last two coordinates, so that $\Gamma^r_{\gamma, \theta}\subset \{x_n \geq 0\}$ at an angle $\arctan(\tilde C\eps)$ from $\{x_n = 0\}$, for some constant $\tilde C$ to be chosen. See Figure~\ref{fig.sub} for a representation after the rotation. 

\begin{figure*}[t]
\centering
\includegraphics[width = 14cm]{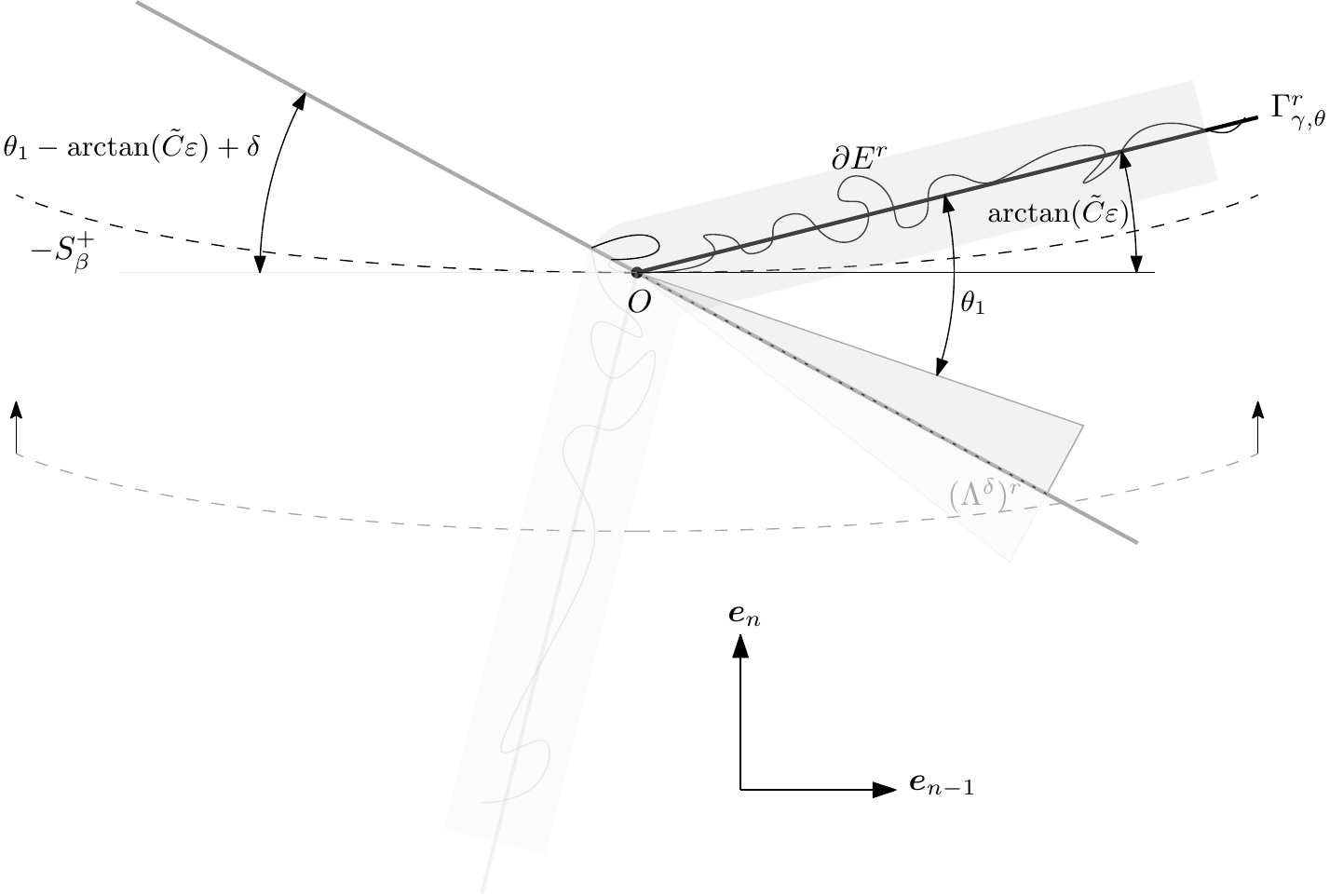}
\caption{Representation of the setting in Lemma~\ref{lem.subslem} after a rotation.}
\label{fig.sub}
\end{figure*}

Notice that $-S^+_\beta$ is a subsolution to the problem, where $S^+_\beta$ denotes the supersolution constructed in Lemma~\ref{lem.supersolution}. Now the situation is the same as in Lemma~\ref{lem.cneps} upside down. In the new coordinates after the rotation, since in $\{x_{n-1}> 0\}$ any point on $\de E^r$ is locally a supersolution, we will be able to slide up the subsolution up until the origin for the same constant $\tilde C$ as in Lemma~\ref{lem.cneps} as long as we are are not touching with it in the region $\{x_{n-1}\leq 0\}$ after the rotation. But this can be avoided choosing $C_1$ such that $C_1 \eps \geq 3 \arctan{\tilde C \eps}$ for $\eps$ small. 
\end{proof}

\section{Improvement of closeness in flat configuration}
\label{sec.4}
In this section we prove our main result, Theorem~\ref{thm.main}, in the flat configuration case in the case $\theta \in (0, C_\circ\eps)$. Namely, we show:

\begin{prop}
\label{prop.impclos}
For every $\alpha \in \left(0, \frac{1}{2}\right)$, there exist positive constants $\rho_\circ$ and $\eps_\circ$ depending only on $n$ and $\alpha$, such that the following statement holds:
 
Let $E\subset \R^n$ satisfying \eqref{eq.TOP}, with $0\in \de E$, be such that $E$ is $\eps$-close to $\Lambda_{{\gamma}, \theta}$ in $B_1$, for some $\theta \in (0, C_\circ\eps)$ and $\eps\in(0,\eps_\circ)$, and \eqref{eq.Phihyp} holds.
 
 Then, 
 \[
   E \mbox{ is } \rho_\circ^{1+\alpha} \eps\mbox{-close to } \Lambda_{\tilde \gamma, \tilde \theta}\mbox{ in } B_{\rho_\circ},
 \]
 for some new $\tilde \gamma'$ and $\tilde \theta$ as in \eqref{eq.eandtheta}. 
\end{prop}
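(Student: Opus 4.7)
The plan is to follow Savin's small-perturbation method: combine compactness with the regularity of a limiting linear problem, which here is the Signorini (thin obstacle) problem for the Laplacian. Since $\theta \in (0, C_\circ\eps)$ the wedge $\Lambda_{\gamma,\theta}$ lies within an $O(\eps)$ Hausdorff-neighbourhood of the half-space $\{e_\gamma\cdot x \le 0\}$, so after a rotation in the $(\boldsymbol{e}_{n-1},\boldsymbol{e}_n)$-plane we may assume $\gamma = 0$ and obtain that $E$ is $C\eps$-close to $\{x_n \le 0\}$ in $B_1$. By \eqref{eq.Phihyp}, the obstacle $\mathcal O$ is an $O(\eps^{3/2})$ perturbation of the flat model $\{x_{n-1}=0,\,x_n\le 0\}$, a scale negligible relative to $\eps$ after the rescaling performed below.

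The first task is a graph representation. Using the $C\eps$-flatness together with the minimality \eqref{eq.TOP} and classical estimates for almost-flat minimal boundaries, I would represent $\partial E \cap B_{3/4}$ as the graph $\{x_n = u(x')\}$ of a Lipschitz function $u:B'_{3/4}\to\R$ with $\|u\|_\infty \le C\eps$ and $u(0) = 0$. Away from $\{x_{n-1}=0\}$, Lemma~\ref{lem.minvis} and Proposition~\ref{prop.minper} yield $Hu = 0$ classically; the obstacle containment $\Phi(\Lambda^\delta)\subset E$ combined with $|D^2\Phi|\le\eps^{3/2}$ forces $u(x'',0) \ge -C\eps^{3/2}$ and a Signorini transmission/complementarity condition across $\{x_{n-1}=0\}$. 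Rescaling $v := u/\eps$ gives $\|v\|_\infty \le C$, $v(0)=0$, $v(x'',0)\ge -C\eps^{1/2}$, and $v$ solves an $O(\eps^{1/2})$-perturbation of the Signorini problem for the Laplacian in $B'_{3/4}$.

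The compactness step is the heart of the proof. Assume the conclusion fails and pick a sequence $(E_k,\Phi_k,\gamma_k,\theta_k,\eps_k)$ with $\eps_k\downarrow 0$ violating the conclusion for every $(\tilde\gamma,\tilde\theta)$. A Savin-type Harnack inequality for the rescaled graph functions $v_k$, uniform up to the obstacle hyperplane $\{x_{n-1}=0\}$, yields uniform H\"older estimates on compact subsets; extract a uniform subsequential limit $v_\infty$ which is a Signorini solution in $B'_{1/2}$ with thin obstacle $v_\infty\ge 0$ on $\{x_{n-1}=0\}$ and $v_\infty(0)=0$. By the Athanasopoulos--Caffarelli $C^{1,1/2}$ regularity \cite{AC04,ACS08}, there exist $p\in\R^{n-1}$ and $q\ge 0$ with
\[
\bigl|v_\infty(x') - p\cdot x' - q|x_{n-1}|\bigr| \le C\rho^{3/2} \qquad \text{for all } x'\in B'_\rho,
\]
for every small $\rho$. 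The wedge $\Lambda_{\tilde\gamma,\tilde\theta}$ whose (almost-flat) graph is $x_n=\eps_k\bigl(p\cdot x'+q|x_{n-1}|\bigr)$ has parameters $\tilde\gamma,\tilde\theta=O(\eps_k)$ and is admissible in \eqref{eq.eandtheta}. Since $\alpha<\frac{1}{2}$, one may pick $\rho_\circ=\rho_\circ(n,\alpha)$ small enough so that $C\rho_\circ^{3/2}\le\frac{1}{2}\rho_\circ^{1+\alpha}$; undoing the rescaling contradicts the failure of the conclusion for $E_k$ with this new wedge.

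The main obstacle is the Harnack inequality uniform up to $\{x_{n-1}=0\}$ used in the compactness step. Away from the obstacle, one applies Savin's classical approach \cite{Sav09,Sav10}; on $\{x_{n-1}=0\}$ the Signorini-type free boundary prevents a direct use of standard oscillation-decay, since the graph of $\partial E$ may stick to or lift off $\mathcal O$ unpredictably. The needed refined barriers can be built by combining the mean-curvature supersolution of Lemma~\ref{lem.supersolution} (and its companions in Lemmas~\ref{lem.cneps}, \ref{lem.subslem}) with even reflections across $\{x_{n-1}=0\}$, converting the thin obstacle into a Dirichlet-type condition on the contact set together with a transmission condition on its complement. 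A subsidiary technical point will be to verify nondegeneracy so that, in the limit, the contact set of $v_\infty$ captures the full ``thin'' structure of the obstacle rather than collapsing.
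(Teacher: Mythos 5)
Your overall strategy coincides with the paper's: compactness of vertical rescalings, identification of the limit as a Signorini solution, the Athanasopoulos--Caffarelli $C^{1,1/2}$ expansion, and the choice of $\rho_\circ$ with $C\rho_\circ^{3/2}\le\rho_\circ^{1+\alpha}$ permitted by $\alpha<\tfrac12$. However, two steps are genuine gaps rather than routine details. The first is the opening claim that $\partial E\cap B_{3/4}$ is a Lipschitz graph ``by classical estimates for almost-flat minimal boundaries'': near the contact set $\partial E$ is only a supersolution, not a perimeter minimizer, so De Giorgi-type interior regularity does not apply there, and quantitative graphicality near the thin obstacle is exactly what is established \emph{later} (Proposition~\ref{thm.thmu}) using the present proposition --- assuming it up front is circular. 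The paper never assumes graphicality: it works with the sets $E_k$ themselves, proves Hausdorff convergence of the vertical rescalings $E_k^{\eps_k}$ to a graph (Lemma~\ref{lem.Eeps}), and obtains the limit equation by touching with test polynomials in the viscosity sense (Lemma~\ref{lem.ctop}).

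The second and more serious gap is the ingredient you yourself flag as ``the main obstacle'': the oscillation decay (Harnack) uniform up to $\{x_{n-1}=0\}$, which is the heart of the compactness step and is left unproved. Your proposed route --- even reflection across $\{x_{n-1}=0\}$ combined with the barriers of Lemmas~\ref{lem.supersolution}, \ref{lem.cneps} and \ref{lem.subslem} --- is not obviously implementable at the level of sets, since reflection turns Signorini into a transmission problem only once one already controls the surface at the interface. The paper instead proves Propositions~\ref{prop.harnacktype} and~\ref{prop.harnackthino} by applying Savin's half-Harnack measure estimate (Proposition~\ref{prop.savinhh}) twice, once to the supersolution $E$ and once to the complement of $E\cup\{x_n\le b\}$ (a subsolution because the obstacle lies below $\{x_n\le b\}$ when $b>\eps^{1+\frac14}$), and reaches a contradiction by comparing $P(E;Q)$ with the lateral perimeter of a thin cylinder $Q$; without this or an equivalent quantitative decay the argument does not close. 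A minor further slip: in your expansion $p\cdot x'+q\lvert x_{n-1}\rvert$ with $q\ge0$ the kink is convex, hence subharmonic across $\{x_{n-1}=0\}$, whereas the Signorini condition $\Delta v_\infty\le0$ on the contact set forces the opposite sign (the paper's $\partial^-_{n-1}u(0)\ge\partial^+_{n-1}u(0)$); only with that sign is the approximating set an admissible wedge $\Lambda_{\tilde\gamma,\tilde\theta}$, i.e.\ an intersection rather than a union of half-spaces.
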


The proof of this proposition follows by compactness, using the $C^{1,1/2}$ regularity of the solutions to the classical thin obstacle problem with the Laplacian, $\Delta$. 

The following proposition will be used to show compactness of vertical rescalings $\big\{ (x',x_n/\eps) \,:\, (x',x_n)\in \partial E \big\}$ near a contact point.

\begin{prop}
\label{prop.harnacktype}
 There exist $h_\circ$ and $\tau_\circ$ depending only on $n$ such that the following statement holds: 
 
Denote $Q_1 := B_1' \times(-1,1)$ Let $E\subset \R^n$ satisfying, for some $\boldsymbol v \in Q_1$,
 \begin{equation} \label{1.17'}
  P(E;Q_1)\le P(F;Q_1) \quad \forall F\  : \ E\setminus Q_1 = F\setminus Q_1 \mbox{ and } \big(\boldsymbol v +\Phi(\Lambda^{\delta})\big) \cap Q_1\subset F. 
\end{equation}
be such that for some $b\in(-1, 1)$ and some $h \in (0, h_\circ)$, \eqref{eq.Phihyp} holds for $\eps \in (0, h)$, 
 \[
  \{x_n \leq b-h \} \subset E \subset \{x_n \leq b + h\},\quad\mbox{ in } B_1'\times(-1, 1),
 \]
 and 
 \[
  \big(\boldsymbol v +\Phi(\Lambda_{0, h})\big) \subset E,\quad\mbox{ in } B_1'\times(-1, 1).
 \]
 Then, 
 \begin{itemize}
  \item either $\{x_n \leq b - h(1-\tau_\circ)\}\subset E$, in $B_{1/2}'\times (-1,1)$;
  \item or $E\subset \{x_n \leq b + h(1-\tau_\circ)\}$, in $B_{1/2}'\times (-1,1)$.
 \end{itemize}
\end{prop}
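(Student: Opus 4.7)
My plan is to prove this Harnack-type dichotomy by a compactness-and-contradiction argument via an anisotropic rescaling, in the spirit of De Giorgi's improvement-of-flatness scheme. Suppose the conclusion fails; then there exist sequences $h_k\downarrow 0$, $\tau_k\downarrow 0$, $b_k\in(-1,1)$, $\boldsymbol v_k\in Q_1$, diffeomorphisms $\Phi_k$ with $|D^2\Phi_k|\le h_k^{3/2}$, and sets $E_k$ satisfying all the hypotheses with $h=h_k$ but for which both alternatives fail with $\tau_\circ=\tau_k$. Translating in the $x_n$-direction, I may assume $b_k=0$.

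I would then apply the anisotropic rescaling $T_k:(x',x_n)\mapsto(x',x_n/h_k)$ to the sets $E_k$, obtaining rescaled sets $\widetilde E_k$ whose boundaries lie in the unit slab $B_1'\times[-1,1]$. Under $T_k$ the wedge $\Lambda_{0,h_k}$ (upper boundary of slope $\tan h_k$) converges to the fixed $45^\circ$ wedge $\{\tilde x_n\le-|x_{n-1}|\}$, and the hypothesis $|D^2\Phi_k|\le h_k^{3/2}$ ensures the effect of $\Phi_k$ is a vanishing perturbation in the rescaled coordinates. By Lemma~\ref{lem.minvis}, $\partial E_k$ is a viscosity supersolution of the minimal-surface operator $M$. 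Combined with standard density estimates for minimal-perimeter sets in thin slabs, a subsequence of $\partial\widetilde E_k$ converges locally uniformly in $B_1'\times(-1,1)$ to the graph of a function $u:B_1'\to[-1,1]$. The linearization $M\to\Delta$ in the flat regime (gradients of $\partial E_k$ are $O(h_k)$, so nonlinear terms in $M$ scale away), together with the viscosity supersolution property, makes $u$ a viscosity supersolution of $\Delta u=0$, i.e., superharmonic in $B_1'$.

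The failure of both alternatives at level $\tau_k$ provides points $q_k,p_k\in\partial E_k\cap(\overline{B_{1/2}'}\times(-1,1))$ with $q_{k,n}\le-h_k(1-\tau_k)$ and $p_{k,n}\ge h_k(1-\tau_k)$. Passing to the limit, there exist $(y^-)',(y^+)'\in\overline{B_{1/2}'}\subset B_1'$ with $u((y^-)')=-1$ and $u((y^+)')=+1$. By the strong minimum principle for superharmonic functions, $u((y^-)')=-1$ at an interior point forces $u\equiv-1$ in $B_1'$, contradicting $u((y^+)')=+1$. This yields the desired $\tau_\circ,h_\circ>0$ depending only on $n$.

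The main difficulty is the compactness/limit-identification step: showing that $\partial\widetilde E_k$ converges to the graph of a Lipschitz function, and that the limit inherits superharmonicity under the anisotropic rescaling. This requires careful minimal-surface density estimates in the slab regime and a linearization of $M$ exploiting $|D^2\Phi_k|\le h_k^{3/2}$ to absorb the $\Phi_k$-nonlinearity into a vanishing remainder. The wedge obstacle $\boldsymbol v_k+\Phi_k(\Lambda_{0,h_k})$ survives the limit (when $\tilde v_{k,n}:=v_{k,n}/h_k$ stays bounded along the subsequence) as a one-sided constraint $u\ge g$ for a fixed wedge function $g$, which is consistent with the superharmonicity of $u$ since the obstacle only pushes $u$ up; if $\tilde v_{k,n}\to-\infty$ the obstacle leaves $B_1'\times[-1,1]$ in the limit and no obstacle constraint on $u$ remains.
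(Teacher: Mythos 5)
Your proposal has a genuine gap at precisely the step you flag as ``the main difficulty'': the claim that a subsequence of the anisotropically rescaled boundaries $\partial\widetilde E_k$ converges locally uniformly to the graph of a (super)harmonic function. In Savin's scheme this compactness is not an input to the Harnack inequality --- it is its \emph{consequence}. The paper obtains equicontinuity of the vertical rescalings (Lemma~\ref{lem.Eeps}) only by iterating Proposition~\ref{prop.harnacktype} itself at dyadic scales; so deriving the proposition from that compactness is circular. Nor do the tools you cite close the gap: density estimates and the viscosity supersolution property control volume fractions and one-sided touching at the \emph{isotropic} scale, but they say nothing about oscillation at scales below $h_k$ after the map $(x',x_n)\mapsto(x',x_n/h_k)$. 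A boundary with a vertical jump of height $O(h_k)$ over a tiny horizontal distance is perfectly consistent with the slab confinement, the density estimates, and the supersolution property, yet its rescaling is not close to any continuous graph (and the perimeters of $\widetilde E_k$ need not even be bounded, so $L^1_{\rm loc}$ compactness is not free either). Ruling out such behavior is exactly the content of the statement you are trying to prove.

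The paper's actual argument avoids compactness entirely. It splits into the cases $b\le \eps^{1+1/4}$ (where the wedge obstacle $\boldsymbol v+\Phi(\Lambda_{0,h})\subset E$ alone forces the first alternative) and $b>\eps^{1+1/4}$ (where the obstacle sits below the slab, so $\tilde E:=E\cup\{x_n\le b\}$ is a subsolution). In the second case it applies Savin's measure estimate (Proposition~\ref{prop.savinhh}) once to $E$ and once to $\tilde E^c$: if both alternatives of the conclusion failed, a $3/4$ fraction of $B_{3/4}'$ would be covered by the projection of $\partial E$ near the bottom of the slab \emph{and} near the top, giving $P(E;B_{3/4}'\times(b-h,b+h))\ge\tfrac32|B_{3/4}'|$, which contradicts minimality since filling in that thin cylinder produces an admissible competitor with smaller perimeter for $h$ small. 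If you want to keep a dichotomy proof in your own style, you must replace the compactness step by a quantitative measure estimate of this kind (or by a De Giorgi--type height/excess bound); the strong minimum principle endgame cannot be reached without it.
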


To prove Proposition~\ref{prop.harnacktype} we need the following half-Harnack for supersolutions; see \cite[Section 2]{Sav10b} or the proof of \cite[Thm 5.3]{Sav10}. 
 \begin{prop}[\cite{Sav10,Sav10b}]
 \label{prop.savinhh}
Let $E\subset \R^n$ be a supersolution to the minimal perimeter problem in $B_1$, and suppose $\de E \subset \{x_n \geq 0\}$. Then, for every $\eta_\circ > 0$, there exists some $\tau_\circ$ and $C$ depending only on $n$ and $\eta_\circ$ such that if $\tau < \tau_\circ$ and $\tau \boldsymbol{e}_n \in \de E$, then 
  \[
  \left|\Pi_{\boldsymbol{e}_n} \left(\de E \cap \{x_n \leq C \tau \}\cap (B_1'\times (-1,1)) \right) \right|_{\mathcal{H}^{n-1}} \geq (1-\eta_\circ) |B_1'|_{\mathcal{H}^{n-1}};
  \]
  where $\Pi_{\boldsymbol{e}_n} $ denotes the projection of a set onto $B_1'$ in the $\boldsymbol{e}_n$ direction. 
 \end{prop}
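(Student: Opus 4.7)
My plan is a Harnack-via-half-Harnack argument based on Proposition~\ref{prop.savinhh}, followed by a pointwise promotion using the Lipschitz regularity of $\partial E$ in the flat regime. After translating vertically so $b = 0$, the conclusion becomes: either (a) $\partial E \cap (B'_{1/2}\times(-1,1)) \subset \{x_n \ge -h(1-\tau_\circ)\}$ or (b) $\partial E \cap (B'_{1/2}\times(-1,1)) \subset \{x_n \le h(1-\tau_\circ)\}$. If $\partial E$ avoids the bottom strip $\{x_n < -h(1-\tau_\circ)\}$ in $B'_{1/2}\times(-1,1)$, (a) holds directly; otherwise I pick a ``low point'' $p = (p',p_n) \in \partial E$ with $p' \in B'_{1/2}$ and $p_n + h \le h\tau_\circ$ and aim to deduce (b).

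To set up Proposition~\ref{prop.savinhh}, I would pass to $\tilde E := E + h\boldsymbol e_n$, still a supersolution with $\partial\tilde E \subset \{0 \le x_n \le 2h\}$, containing $\tilde p := p + h\boldsymbol e_n$ of height $\tilde p_n \le h\tau_\circ$. Rescaling so that $B_{1/4}(\tilde p) \subset Q_1$ and fixing a small $\eta_\circ$ (so that $\tilde\tau_\circ = \tilde\tau_\circ(\eta_\circ,n)$ in Proposition~\ref{prop.savinhh} is determined), I obtain a set $A_0 \subset B'_{1/4}(p')$ of measure $\ge (1-\eta_\circ)|B'_{1/4}|$ on which $\partial E$ dips to height $\le -h + Ch\tau_\circ$. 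I then iterate: each point of $A_0$ is itself a low point of controlled height (growing by at most a factor $C$ per iteration), so Proposition~\ref{prop.savinhh} applies again there. After $k = k(n)$ iterations, the $B'_{1/4}$-neighborhoods of the accumulated centers cover $B'_{1/2}$, yielding a set $A \subset B'_{1/2}$ of measure $\ge (1-k\eta_\circ)|B'_{1/2}|$ over which $\partial E$ dips to height $\le -h(1-C'\tau_\circ)$; my constants $h_\circ, \tau_\circ$ are then chosen so that $h\tau_\circ C^k < \tilde \tau_\circ$ and $k\eta_\circ$ is small.

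To pass from this density estimate to the pointwise bound (b), I would use that in the flat regime $\partial E$ is a graph. Away from the obstacle, Proposition~\ref{prop.minper} and De~Giorgi's $\eps$-regularity give that $\partial E = \{x_n = u(x')\}$ is a $C^{1,\alpha}$ graph with $\|\nabla u\|_\infty = O(h)$; at the obstacle, the hypothesis $\boldsymbol v + \Phi(\Lambda_{0,h}) \subset E$ (the wedge $\Lambda_{0,h}$ being almost a half-space for small $h$) forces $\partial E$ to remain a Lipschitz graph with at worst a crease of slope $O(h)$ along the obstacle edge. Hence $u$ is $O(h)$-Lipschitz on $B'_{3/4}$, and since $u \le -h(1-C'\tau_\circ)$ on $A$ with $d(x'_0, A) \le C_n\eta^{1/(n-1)}$ for every $x'_0 \in B'_{1/2}$,
\[
u(x'_0) \le -h(1-C'\tau_\circ) + O(h)\cdot C_n\eta^{1/(n-1)} \le h(1-\tau_\circ),
\]
once $\tau_\circ$ and $\eta_\circ$ are chosen small depending only on $n$. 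This establishes (b).

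I expect the main obstacle to be coordinating the constants across the iteration (keeping $h\tau_\circ C^k$ below $\tilde \tau_\circ$ while also keeping $k\eta_\circ$ small), together with verifying the graph/Lipschitz structure near the thin obstacle $\boldsymbol v + \Phi(\Lambda^\delta)$. For the latter step, the inclusion $\boldsymbol v + \Phi(\Lambda_{0,h}) \subset E$ is essential—it prevents any pinching of $\partial E$ at the obstacle edge—and the perturbation control $|D^2\Phi| \le \eps^{1+1/2}$ from \eqref{eq.Phihyp} ensures that both the supersolution property used in Proposition~\ref{prop.savinhh} and the De~Giorgi regularity of the minimizing portion of $\partial E$ survive up to quasi-flat corrections at our scales.
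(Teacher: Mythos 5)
Your proposal does not prove the statement at hand: Proposition~\ref{prop.savinhh} is precisely the half-Harnack measure estimate that you invoke as a black box (``To set up Proposition~\ref{prop.savinhh}, I would pass to $\tilde E := E + h\boldsymbol e_n$\dots''), so as a proof of Proposition~\ref{prop.savinhh} the argument is circular. What you have actually sketched is a proof of the oscillation-decay dichotomy of Proposition~\ref{prop.harnacktype}: the parameters $b$, $h$, $h_\circ$, the inclusion $\boldsymbol v + \Phi(\Lambda_{0,h})\subset E$, and the cylinder $Q_1$ all come from that statement, not from Proposition~\ref{prop.savinhh}, whose hypotheses are only that $E$ is a supersolution with $\de E\subset\{x_n\ge 0\}$ and $\tau\boldsymbol e_n\in\de E$. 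The paper does not prove Proposition~\ref{prop.savinhh} either; it is quoted from \cite{Sav10,Sav10b}, and an actual proof requires a different mechanism (Savin's measure estimate for viscosity supersolutions of the minimal surface equation, obtained by sliding a family of comparison surfaces from below near the low point $\tau\boldsymbol e_n$ and showing that the resulting contact set projects onto most of $B_1'$). None of that machinery appears in your text, so the key idea is missing entirely.

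Even read as a proof of Proposition~\ref{prop.harnacktype}, your route diverges from the paper's and has a gap. The paper applies Proposition~\ref{prop.savinhh} only twice --- once to $E$ from below and once to the complement of the subsolution $\tilde E = E\cup\{x_n\le b\}$ --- and concludes by a perimeter-excess contradiction: the two projected sets each have measure at least $\tfrac34|B_{3/4}'|_{\mathcal H^{n-1}}$, forcing $P(E;Q)\ge\tfrac32|B_{3/4}'|_{\mathcal H^{n-1}}$, which is incompatible with minimality against the competitor $E\cup Q$. No iteration and no pointwise upgrade are needed. Your alternative final step --- promoting the density estimate to a pointwise bound by asserting that $\de E$ is an $O(h)$-Lipschitz graph on $B_{3/4}'$ --- is not available at this stage: De~Giorgi's $\eps$-regularity applies to minimal boundaries, whereas near the obstacle $\de E$ is only a constrained minimizer, and the graph/Lipschitz structure there is exactly what the later sections of the paper (Theorem~\ref{thm.flat_obst_p}, Lemma~\ref{lem.gamma}) establish \emph{using} Proposition~\ref{prop.harnacktype} as input; invoking it here would make the overall scheme circular a second time.
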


\begin{proof}[Proof of Proposition~\ref{prop.harnacktype}]
We separate the proof into two different scenarios. 

The first possibility is $b\leq \eps^{1+\frac14}$. In this case, since $\Phi(\Lambda_{0, h})\subset E$, it follows that 
\[
\left\{x_n\le -\frac{\tan h}{2} - C\eps^{1+\frac12}\right\}\subset E,\quad \mbox{ in } B_{1/2}'\times (-1, 1),
\]
for some $C$ depending only on $n$. For $h_\circ$ small enough depending only on $n$, since $\eps \le h\le h_\circ$ and $b \le \eps^{1+\frac14}$, 
\[
\left\{x_n\leq b-\frac{3}{4}h \right\}\subset \left\{x_n\leq -\frac{\tan h}{2}-C\eps^{1+\frac12} \right\}\subset E,\quad \mbox{ in } B_{1/2}'\times (-1, 1).
\]
This completes the case $b \le \eps^{1+\frac14}$.

The second case is $b > \eps^{1+\frac14}$, and is less straight-forward. By Savin's half Harnack, Proposition~\ref{prop.savinhh}, for every $\tau > 0$ small enough depending only on $n$, if there exists 
\begin{equation}
\label{eq.ch1}
z =(z', z_n)\in \de E,\mbox{ with } |z'|\leq\frac{1}{2}\mbox{ and } z_n\leq b-h+\tau h,
\end{equation}
then 
\begin{equation}
\label{eq.h1}
  \left|\Pi_{\boldsymbol{e}_n} \left(\de E \cap B_1\cap \left(B_{3/4}'\times(-1, 1)\right) \cap \{x_n \leq b-h+C_1 \tau h \}\right)\right|_{\mathcal{H}^{n-1}} \geq \frac{3}{4} |B_{3/4}'|_{\mathcal{H}^{n-1}},
\end{equation}
for some constant $C_1$ depending only on $n$. 

On the other hand, notice that since we are in the case $b > \eps^{1+\frac14}$, 
\[
\tilde E := E\cup \{x_n \leq b\},
\]
is a subsolution to the minimal perimeter problem in $B_1$ for $h$ small enough. This follows since $\Phi(\Lambda^{\delta} )\subset \{x_n \le \eps^{1+\frac14}\}$ for $\eps$ small enough, and $\de E$ is a surface of minimal perimeter whenever it does not touch $\Phi(\Lambda^{\delta} )$.

Take $\tilde E^c$, and apply again Proposition~\ref{prop.savinhh} to get that, for every $\tau > 0$ small enough depending only on $n$ (take $\tau < C_1^{-1}$), if there exists 
\begin{equation}
\label{eq.ch2}
z =(z', z_n)\in \de E,\mbox{ with } |z'|\leq\frac{1}{2}\mbox{ and } z_n\geq b+h-\tau h,
\end{equation}
then 
\begin{equation}
\label{eq.h2}
  \left|\Pi_{\boldsymbol{e}_n} \left(\de E \cap B_1\cap \left(B_{3/4}'\times(-1, 1)\right) \cap \{x_n \geq b+h-C_1 \tau h \}\right)\right|_{\mathcal{H}^{n-1}} \geq \frac{3}{4} |B_{3/4}'|_{\mathcal{H}^{n-1}}.
\end{equation}

Take $Q = B_{3/4}' \times (b-h, b+h)$
In particular, we must have that 
\[
P(E; Q) \geq \frac{3}{2}|B_{3/4}'|_{\mathcal{H}^{n-1}}.
\]

Notice, on the other hand, that we can take $h$ small enough so that the lateral perimeter of $Q$ is less than $\frac{1}{2}|B_{3/4}'|_{\mathcal{H}^{n-1}}$. This yields a contradiction, since including $Q$ to $E$ gives a competitor for the minimizer of \eqref{eq.TOP}; and therefore either \eqref{eq.ch1} or \eqref{eq.ch2} does not hold. This completes the proof. 
\end{proof}

We also need a similar improvement of oscillation \emph{far away from contact points}. In such case, we can use the following classical Harnack inequality for minimal surfaces. 
The proof of this proposition is an straightforward application of Proposition \ref{prop.savinhh}.

\begin{prop}[\cite{Sav10b}]
\label{prop.harnackminimal}
There exists $h_\circ$ and $\tau_\circ$ depending only on $n$ such that the following statement holds:
 
 Let $E\subset \R^n$ be a set of minimal perimeter in $B_1'\times(-1, 1)$, such that for some $b\in(-1, 1)$ and some $h \in (0, h_\circ)$
 \[
  \{x_n \leq b-h \} \subset E \subset \{x_n \leq b + h\},\quad\mbox{ in } B_1'\times(-1, 1).
 \]

 Then, 
 \begin{itemize}
  \item either $\{x_n \leq b - h(1-\tau_\circ)\}\subset E$, in $B_{1/2}'\times (-1,1)$;
  \item or $E\subset \{x_n \leq b + h(1-\tau_\circ)\}$, in $B_{1/2}'\times (-1,1)$.
 \end{itemize}
\end{prop}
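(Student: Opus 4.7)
The plan is to run the same scheme used for Proposition~\ref{prop.harnacktype}, with the substantial simplification that no thin obstacle blocks the construction of a competitor. Because $E$ is a set of minimal perimeter in $B_1'\times(-1,1)$, it is simultaneously a subsolution and a supersolution in the sense of Definition~\ref{defi.sssolution}, and hence Proposition~\ref{prop.savinhh} is applicable both to $E$ and to $E^c$ (after reflecting in the $x_n$-direction).

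First, I would argue by contradiction: suppose both alternatives fail. Then there exist $z^-,z^+\in\partial E\cap(B_{1/2}'\times(-1,1))$ with $z_n^-\le b-h+\tau_\circ h$ and $z_n^+\ge b+h-\tau_\circ h$. Applying Proposition~\ref{prop.savinhh} (after translating so that $\partial E\subset\{x_n\ge b-h\}$) to the supersolution $E$ at $z^-$ yields some $C_1=C_1(n)$ such that the projection
\[
\Pi_{\boldsymbol e_n}\bigl(\partial E\cap\{x_n\le b-h+C_1\tau_\circ h\}\cap(B_{3/4}'\times(-1,1))\bigr)
\]
has $\mathcal{H}^{n-1}$-measure at least $\tfrac 34|B'_{3/4}|_{\mathcal{H}^{n-1}}$. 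The analogous application to $E^c$ at $z^+$ gives the same lower bound for the projection of $\partial E\cap\{x_n\ge b+h-C_1\tau_\circ h\}\cap(B_{3/4}'\times(-1,1))$. Choosing $\tau_\circ$ so that $C_1\tau_\circ<1$, these two pieces lie in disjoint horizontal strips, and summing the contributions gives
\[
\mathcal{H}^{n-1}\bigl(\partial E\cap Q\bigr)\ge \tfrac 32|B'_{3/4}|_{\mathcal{H}^{n-1}},\qquad Q:=B_{3/4}'\times(b-h,b+h).
\]

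Finally, I would compare $E$ with the competitor $F:=(E\setminus Q)\cup(\{x_n\le b\}\cap Q)$ inside an open set $V$ slightly larger than $Q$. The perimeter of $F$ in $V$ is bounded by the horizontal disk at height $b$ of area $|B'_{3/4}|_{\mathcal{H}^{n-1}}$, plus a lateral contribution on the cylindrical face $\partial B'_{3/4}\times(b-h,b+h)$ which is at most $2h\,|\partial B'_{3/4}|_{\mathcal{H}^{n-2}}$, plus the unchanged piece of $\partial E$ in $V\setminus Q$. Choosing $h_\circ(n)$ small enough that $2h_\circ\,|\partial B'_{3/4}|_{\mathcal{H}^{n-2}}<\tfrac 12|B'_{3/4}|_{\mathcal{H}^{n-1}}$ yields a competitor with strictly smaller perimeter in $V$ than $E$, contradicting the minimality of $E$.

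The only non-trivial ingredient is Proposition~\ref{prop.savinhh} itself, which transforms the existence of a single boundary point near the top or bottom of the slab into a genuinely large (projected) piece of $\partial E$ at that level; once that is granted the rest is routine perimeter bookkeeping, and the constants $\tau_\circ$ and $h_\circ$ are chosen only to absorb $C_1(n)$ and the lateral measure $|\partial B'_{3/4}|_{\mathcal{H}^{n-2}}$. Compared with Proposition~\ref{prop.harnacktype}, the argument is genuinely simpler because no case distinction on $b$ vs.\ $\eps^{1+1/4}$ is needed: in the obstacle-free setting one can always lower $E$ to $\{x_n\le b\}\cap Q$.
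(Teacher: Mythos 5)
Your argument is correct and coincides with the paper's intended proof: the paper only remarks that this proposition is a straightforward application of Proposition~\ref{prop.savinhh} (citing \cite{Sav10b}), and the mechanism you use --- half-Harnack applied to $E$ from below and to $E^c$ from above, disjoint strips forcing $P(E;Q)\ge\tfrac32|B_{3/4}'|_{\mathcal{H}^{n-1}}$, then a half-space competitor in the slab whose lateral cost is absorbed by taking $h_\circ$ small --- is exactly the one spelled out in the proofs of the analogous Propositions~\ref{prop.harnacktype} and~\ref{prop.harnackthino}. Your observation that the obstacle-free case needs no case distinction on $b$ is also accurate.
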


Actually, to account for situations in which $\partial E$ may stick to $\partial \Phi(\Lambda_{\gamma,\theta})$, we need the following version of Proposition~\ref{prop.harnackminimal} for minimal surfaces with flat enough thin obstacles.

\begin{prop}
\label{prop.harnackthino}
There exists $h_\circ$ and $\tau_\circ$ depending only on $n$ such that the following statement holds: 

Assume that $\Phi$ satisfies \eqref{eq.Phihyp} with $\eps\in (0, h)$. 
 Let $E\subset \R^n$, satisfying 
 \[\Phi\big(\{x_n\le0\}\big)\cap Q_1\subset E\] where we denote $Q_r := B_r'\times(-1, 1)$, be a solution of 
 \[
 P(E; Q_1) \le P(F; Q_1)\quad \forall F \mbox{ such that } E\setminus Q_1= F\setminus Q_1,   \ \Phi\big(\{x_n\le0\}\big)\cap Q_1\subset F. 
 \]
Assume that for some $b\in(-1, 1)$ and some $h \in (0, h_\circ)$
 \[
  \{x_n \leq b-h \} \subset E \subset \{x_n \leq b + h\},\quad\mbox{ in } Q_1.
 \]

 Then, 
 \begin{itemize}
  \item either $\{x_n \leq b - h(1-\tau_\circ)\}\subset E$, in $Q_{1/2}$;
  \item or $E\subset \{x_n \leq b + h(1-\tau_\circ)\}$, in $Q_{1/2}$.
 \end{itemize}
\end{prop}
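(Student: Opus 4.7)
My plan is to mimic the proof of Proposition~\ref{prop.harnacktype}, with the flat half-space obstacle $\Phi(\{x_n\le 0\})$ playing the role of the thick wedge $\boldsymbol v+\Phi(\Lambda_{0,h})$ there. The whole scheme (obstacle-driven case, sub/super solution doubling, two-sided Savin half-Harnack, area contradiction) transplants essentially unchanged, and the only simplification is that the obstacle is flat up to the $O(\eps^{1+1/2})$ error allowed by \eqref{eq.Phihyp}.

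First I would split into the two cases $b\le \eps^{1+1/4}$ and $b>\eps^{1+1/4}$. In the first case, the Taylor expansion of $\Phi$ at the origin, together with $\Phi(0)=0$ and $D\Phi(0)=\mathrm{id}$, gives $\Phi(\{x_n\le 0\})\cap Q_1\supset \{x_n\le -C\eps^{1+1/2}\}\cap Q_1$ for some $C=C(n)$. Since $\eps\le h$, choosing $\eps_\circ$ and $h_\circ$ sufficiently small yields
\[
b-h(1-\tau_\circ)\le \eps^{1+1/4}-h(1-\tau_\circ)\le -C\eps^{1+1/2},
\]
so $\{x_n\le b-h(1-\tau_\circ)\}\cap Q_{1/2}\subset E$, giving the first alternative.

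In the second case I would use exactly the dichotomy of Proposition~\ref{prop.harnacktype}: $E$ is a supersolution in $Q_1$ by Proposition~\ref{prop.minper}, and $\tilde E:=E\cup\{x_n\le b\}$ is a subsolution in $Q_1$. Indeed the assumption $b>\eps^{1+1/4}$ guarantees $\Phi(\{x_n\le 0\})\subset \{x_n\le \eps^{1+1/2}\}\subset\{x_n\le b\}\subset \tilde E$, so any subtractive perturbation of $\tilde E$ stays above the obstacle and can be turned into an admissible competitor for $E$. Assuming neither alternative of the proposition holds, there exist $z^\pm\in \partial E\cap (B_{1/2}'\times(-1,1))$ with $z^-_n\le b-h+\tau h$ and $z^+_n\ge b+h-\tau h$ for some small $\tau$. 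Applying Proposition~\ref{prop.savinhh} to $E$ at $z^-$ and to $\tilde E^c$ at $z^+$ (choosing $\tau<C_1^{-1}$ so the resulting slabs are disjoint) produces the projection bound
\[
\bigl|\Pi_{\boldsymbol e_n}\bigl(\partial E\cap (B_{3/4}'\times\R)\cap\{x_n\le b-h+C_1\tau h\}\bigr)\bigr|\ge \tfrac34|B_{3/4}'|,
\]
and the analogous estimate in $\{x_n\ge b+h-C_1\tau h\}$. Summing, and using that projections are contractions, one obtains $P(E;B_{3/4}'\times(b-h,b+h))\ge \tfrac32|B_{3/4}'|$. Comparing with the admissible competitor $F:=E\cup (B_{3/4}'\times(b-h,b+h))$, whose added perimeter contribution is at most $|B_{3/4}'|+2h\,\mathcal H^{n-2}(\partial B_{3/4}')$, contradicts \eqref{eq.TOP} once $h_\circ$ is small enough in terms of $n$.

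The main obstacle in the argument is the subsolution claim for $\tilde E$: subtractive perturbations of $\tilde E$ may dip into the $O(\eps^{1+1/2})$ error region of the obstacle and must be reinterpreted as admissible subtractive perturbations of $E$ (where the obstacle containment has to be preserved). The threshold $b>\eps^{1+1/4}\gg \eps^{1+1/2}$ provides exactly the geometric room to carry out this reinterpretation, paralleling Case 2 of the proof of Proposition~\ref{prop.harnacktype}.
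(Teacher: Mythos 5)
Your proposal is correct and takes essentially the same route as the paper: the paper's own proof also reduces to a regime where the obstacle forces the first alternative outright (it splits at $b\le 0$ rather than $b\le\eps^{1+\frac14}$), and otherwise runs the two-sided application of Proposition~\ref{prop.savinhh} to $E$ and to the complement of a doubled subsolution (the paper uses $\tilde E=E\cup\{x_n\le b+h/2\}$ and extracts the first projection estimate from a contact point, but the area-comparison contradiction is identical to the one you transplant from Proposition~\ref{prop.harnacktype}). The only caveat is your phrase that ``any subtractive perturbation of $\tilde E$ stays above the obstacle'' --- that is not literally true, and the subsolution property of $\tilde E$ should instead be justified by submodularity of the perimeter together with the minimality of $E$ and of the half-space $\{x_n\le b\}$, which is exactly where the containment $\Phi(\{x_n\le0\})\subset\{x_n\le b\}$ that you isolate is used.
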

\begin{proof}
The proof is very similar to that of Proposition \ref{prop.harnackminimal} in \cite{Sav10b}. We sketch it.

Note that, by \eqref{eq.Phihyp} we have 
\[ 
\Phi\big(\{x_n=0\}\big) \subset \{|x_n|\le  \eps^{1+\frac 12} \} \quad \mbox{in }Q_1.
\]
Now, if $b\le 0$, since $\partial E$ is above $\Phi\big(\{x_n=0\}\big)$ in $Q_1$, we have 
$\{x_n\le -\eps^{1+\frac 12} \}\subset E$ in $Q_1$. Thus we obtain $\{x_n\le b-h(1-\tau_\circ)\}\subset E$ in $Q_1$ provided $\eps^{1+\frac 12} \le h(1-\tau_\circ)$, which is trivially satisfied if $\tau_\circ\le 1/2$ and $\eps<h<h_\circ \le 1/4$. In other words, the first alternative of the conclusion of the proposition holds whenever $b\le 0$.

Let us now consider the case $b\ge 0$. Note that we may suppose that the ``coincidence set'' $\partial E\cap \Phi\big(\{x_n=0\}\big)$ is nonempty in $Q_{3/4}$ since otherwise the result follows immediately from Proposition~\ref{prop.harnackminimal}, noting $\partial E$ would be a minimal boundary in $Q_{3/4}$.

Since $E$ is a supersolution in $Q_1$ satisfying $\{x_n\le -\eps^{1+\frac 12 }\}\subset E$ in $Q_1$ such that has some point $x_\circ =(x_\circ', x_{\circ,n}) \in \partial E\cap Q_{3/4}$ with $x_{\circ,n}\in (-\eps^{1+\frac 12}, \eps^{1+\frac 12})$, Proposition~\ref{prop.savinhh} (with a standard covering argument) yields 
\begin{equation}\label{eq.1}
 \left|\Pi_{\boldsymbol{e}_n} \left(\de E \cap \{x_n \leq C \eps^{1+\frac 12} \}\cap Q_{3/4}\right) \right|_{\mathcal{H}^{n-1}} \geq \frac{3}{4} |B_{3/4}'|_{\mathcal{H}^{n-1}}.
\end{equation}
At the same time, the set $\tilde E := E\cup \{x_n \le b+h/2\})$ is a subsolution in $Q_1$ since the contact set $\partial E \cap \partial \Phi\big(\{x_n=0\}\big)\cap Q_1$ is contained in $\{x_n\le \eps^{1+\frac 12}\}\subset \{x_n \le b+h/2\}$ (recall $b\ge 0$ and $\eps \le h$). Thus, either 
\begin{equation}\label{eq.aa}
E \subset  \tilde E \subset \big\{x_n \le b+h(1-\tau_\circ)\big\}\quad \mbox{in } Q_{3/4}
\end{equation}
or else, by Proposition~\ref{prop.savinhh} applied to $\tilde E^c$, we would have
\begin{equation}\label{eq.2}
 \left|\Pi_{\boldsymbol{e}_n} \left(\de \tilde E \cap \{x_n \ge b+h - C\tau_\circ h\}\cap Q_{3/4}\right) \right|_{\mathcal{H}^{n-1}} \geq \frac{3}{4} |B_{3/4}'|_{\mathcal{H}^{n-1}}.
\end{equation}
Now \eqref{eq.aa} clearly implies the conclusion of the proposition (first alternative). On the other hand, should \eqref{eq.2} hold then, by definition of $\tilde E$, \eqref{eq.2} would also hold with $\partial \tilde E$ replaced by $\partial E$ and thus we would find a contradiction with \eqref{eq.1} when taking $\tau_\circ$ small enough so that $b+h - C\tau_\circ h>  C \eps^{1+\frac 12}$ (recall $\eps<h<h_\circ$ small enough). Indeed, this contradiction argument --- which uses the minimality of $\partial E$ among boundaries of sets containing the obstacle --- is identical to the one given in the proof of Proposition~\ref{prop.harnacktype}.
\end{proof}

At this point, combining Proposition~\ref{prop.harnacktype} and Proposition~\ref{prop.harnackthino} we obtain the following lemma regarding the convergence of vertical rescalings to a H\"older continuous function.

\begin{lem}
\label{lem.Eeps}
Let $(E_k)_{k\in \N}$ be a sequence such that $E_k\subset \R^n$ satisfy \eqref{eq.TOP}, with $0\in \de E_k$, and with $\Phi_k$ such that \eqref{eq.Phihyp} holds for $\eps = \eps_k$. Suppose $E_k$ is $\eps_k$-close to $\Lambda_{\gamma_k, \theta_k}$ in $B_1$, with $\theta_k \in (0, \eps_k)$, and with $\eps_k \to 0$ as $k \to \infty$. Suppose also that $\Phi_k(\Lambda_{\gamma_k, \theta_k+\eps_k})\subset E_k$ in $B_1$. Let
\begin{equation}
\label{eq.Eeps}
E_k^{\eps_k} := \left\{\left(x', \frac{x_n}{2\eps_k}\right) : x = (x', x_n)\in E_k^r\cap B_1\right\},\quad\quad\mbox{ for all } k \in \N,
\end{equation}
where $E_k^r := R_{\gamma_k}(E_k)$, and $R_{\gamma_k}$ denotes the rotation of angle $\gamma_k$ in the last two coordinates bringing $e_{\gamma_k}$ to $\boldsymbol{e}_n$. 

Then, there exists $u \in C^{0, a}(\overline{B_{1/2}'})$ with $\|u\|_{C^{0, a}(\overline{B_{1/2}'})} \leq C$, for some $C$ depending only on $n$, such that
\begin{equation}
\label{eq.gammaclose}
\{x_n \leq u(x')-\eps_k^{\beta}\} \subset E^{\eps_k}_k \subset \{x_n \leq u(x')+\eps_k^{\beta}\},\quad\mbox{ in } B_{1/2}'\times(-1, 1),
\end{equation}
for some $a > 0$ and $\beta > 0$ depending only on $n$. 
\end{lem}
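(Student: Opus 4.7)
The plan is to obtain a uniform Hölder modulus of continuity for the graph-like boundaries $\partial E_k^{\eps_k}$ by iterating the half-Harnack estimates of Propositions~\ref{prop.harnacktype} and \ref{prop.harnackthino} at dyadic scales, and then extracting a subsequential limit via Arzelà-Ascoli.

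I would first set up the rotated picture. The hypotheses that $E_k$ is $\eps_k$-close to $\Lambda_{\gamma_k,\theta_k}$ with $\theta_k\in(0,\eps_k)$ and that $\Phi_k(\Lambda_{\gamma_k,\theta_k+\eps_k})\subset E_k$ translate, after the rotation $R_{\gamma_k}$, into $\partial E_k^r\cap B_1\subset\{|x_n|\le C\eps_k\}$ together with $\tilde\Phi_k(\Lambda_{0,\theta_k+\eps_k})\subset E_k^r$, where $\tilde\Phi_k:=R_{\gamma_k}\Phi_kR_{\gamma_k}^{-1}$ still satisfies \eqref{eq.Phihyp}. Thus $E_k^r$ falls exactly in the hypotheses of Proposition~\ref{prop.harnacktype}. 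Next, at each $x_0'\in\overline{B_{1/2}'}$ and each dyadic scale $r_j=2^{-j}$, if $\partial E_k^r$ lies in a slab $\{b_j-h_j\le x_n\le b_j+h_j\}$ on the cylinder $Q_{r_j}(x_0')=B_{r_j}'(x_0')\times(-r_j,r_j)$, rescaling to the unit cylinder reduces the problem to applying Proposition~\ref{prop.harnacktype} at unit scale: provided $h_j/r_j<h_\circ$ and the rescaled diffeomorphism satisfies $|D^2\tilde\Phi|\le(h_j/r_j)^{3/2}$ (both ensured by stopping the iteration at a critical scale $r_*=\eps_k^{c}$, with $c=c(n)>0$), and noting that the wedge containment $\tilde\Phi_k(\Lambda_{0,h_j/r_j})\subset E_k^r$ is preserved by the rescaling, one gets a finer slab of half-width $h_{j+1}\le(1-\tau_\circ)h_j$ at scale $r_{j+1}=r_j/2$. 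Iterating while $r_j\ge r_*$ from $h_0\sim\eps_k$, $r_0\sim1$ gives $h_j\le Cr_j^a\eps_k$ for some $a=a(n)>0$; at scales where the midheight $b_j$ is far above the obstacle one may equally well invoke Proposition~\ref{prop.harnackthino}.

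From this decay one reads off a Hölder modulus of continuity with exponent $a$ for the midheights $b_j$ of the slabs, uniformly in $k$, down to the scale $r_*$. Defining $u_k(x_0')$ as the midpoint of the slab obtained at the stopping scale (equivalently, $\sup\{t:(x_0',t)\in E_k^{\eps_k}\}$ up to the error $h_*/\eps_k$), the rescaled functions $u_k$ are uniformly bounded on $\overline{B_{1/2}'}$ with $\|u_k\|_{C^{0,a}}\le C(n)$, and $\partial E_k^{\eps_k}$ is sandwiched between the graphs of $u_k\pm\eps_k^{\beta}$ for some $\beta=\beta(n)>0$ coming from $h_*/\eps_k$. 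A standard Arzelà-Ascoli extraction along a subsequence (not relabelled) produces a limit $u\in C^{0,a}(\overline{B_{1/2}'})$ with the same bound, and absorbing the (vanishing) uniform-convergence error $\|u_k-u\|_\infty$ into $\eps_k^\beta$ (possibly decreasing $\beta$) yields \eqref{eq.gammaclose}.

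The main obstacle is the Harnack iteration: at every dyadic scale one must simultaneously preserve the slab condition with $h_j/r_j<h_\circ$, the perturbation bound on the rescaled $\tilde\Phi_k$, and the wedge containment $\tilde\Phi_k(\Lambda_{0,h_j/r_j})\subset E_k^r$ after rescaling. Balancing these three constraints is what forces the stopping scale $r_*\sim\eps_k^{c}$ and pins down the admissible range of $a$ and $\beta$.
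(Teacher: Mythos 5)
Your proposal is correct and follows essentially the same strategy as the paper's proof: iterate the half-Harnack estimates of Propositions~\ref{prop.harnacktype} and \ref{prop.harnackthino} at dyadic scales around points of $\partial E_k^r$, tracking the slab thickness, the bound on the rescaled diffeomorphism, and the wedge containment under rescaling, down to a stopping scale $r_*\sim\eps_k^c$; read off a $C^{0,a}$ modulus with $a$ determined by $\tau_\circ$; and extract the limit $u$ by Arzel\`a--Ascoli. The paper organizes the rescaling step by case-splitting on $\mathrm{dist}(x_\circ,\partial E_k\cap\partial\mathcal O_k)$ (choosing between Proposition~\ref{prop.harnacktype} near the contact set and Proposition~\ref{prop.harnackthino} away from it, via Lemma~\ref{lem.asumpPhi}), whereas you phrase the choice in terms of the midheight $b_j$ relative to the obstacle; these are equivalent bookkeeping devices and both arrive at the decay $h_j\lesssim r_j^a\eps_k$ and the sandwich~\eqref{eq.gammaclose}.
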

\begin{proof}
Let us define the cylinder $Q_r(x_\circ) = \left(B_r'(x_\circ')\times (-1, 1)\right)\cap B_1$ for any $x_\circ = (x_\circ',x_{\circ,n}) \in B_1$. Notice that, thanks to the hypotheses, for any $x_\circ \in \de E_k^r\cap B_{1/2}$,
\[
\de E_k^r \cap Q_{1/2}(x_\circ^r) \subset \{x \in B_1: |x_n - x_{\circ, n}| \leq 2\eps_k\},
\]
where $x_\circ^r$ denotes the rotated version of $r$. 
That is, introducing a notation, we have
\[
\underset{Q_{2^{-1}}(x_\circ^r)}{{\rm osc}_n} \de E_k^r \leq 2\eps_k;
\]
the oscillation in the $\boldsymbol{e}_n$ direction of $\de E_k^r$ in the cylinder $Q_{2^{-1}}(x_\circ^r)$ is less than $2\eps_k$. We would like to use that if $\eps_k$ is small enough, then either Proposition~\ref{prop.harnacktype} or Proposition~\ref{prop.harnackthino} improves the oscillation in the half cylinder, and proceed iteratively. In order to do that, we separate between four cases.
\\[0.2cm]
{\it Case 1: $x_\circ = 0$.} The first case we consider is $x_\circ = 0\in \de E_k$. By assumption, $\Phi_k(\Lambda_{\gamma_k, \theta_k+\eps_k})\subset E_k$ in $B_1$, and we have that 
\[
\underset{Q_{2^{-1}}(x_\circ^r)}{{\rm osc}_n} \de E_k^r \leq 2\eps_k.
\]
If we denote as $h_\circ$ and $\tau_\circ$ the variables coming from Proposition~\ref{prop.harnacktype}; we have that if 
\begin{equation}
\label{eq.hyp00}
4\eps_k\leq h_\circ,
\end{equation}
then 
\[
\underset{Q_{2^{-2}}(x_\circ^r)}{{\rm osc}_n} \de E_k^r \leq 2\eps_k(1-\tau_\circ).
\]
We are using here Proposition~\ref{prop.harnacktype} with $h = \eps_k$. Condition \eqref{eq.hyp00} is to ensure that $\theta_k +\eps_k \le h_\circ$ \footnote{Notice that here we want to ensure that $\Phi(\Lambda_{0,h})\subset E_k^r$ in order to apply Proposition~\ref{prop.harnacktype}. We actually have that $R_{\gamma_k} \Phi_k(\Lambda_{\gamma_k, \theta_k+\eps_k})\subset E_k^r$, but this is enough to use it as a barrier from below in the proof of Proposition~\ref{prop.harnacktype}. }. If we rescale by a factor 2, we have 
\[
\underset{Q_{2^{-1}}(x_\circ^r)}{{\rm osc}_n} 2\de E_k^r \leq 4\eps_k(1-\tau_\circ),
\]
so that, if we want to repeat the argument, hypothesis \eqref{eq.hyp00} becomes 
\[
8\eps_k (1-\tau_\circ)\le h_\circ.
\]
If we want to continue one next iteration, we can take $h = 2\eps_k(1-\tau_\circ)$. Notice that, after the rescaling, the transformation $\Phi$ associated to $2\de E_k$, is $\tilde \Phi_k(x) = 2\Phi_k(x/2)$, so that $|D^2\tilde \Phi_k|\le 2^{-1}\eps_k^{1+\frac12}$, and the hypotheses of Proposition~\ref{prop.harnacktype} are still fulfilled, with a better constant. 

Rescaling and repeating this procedure iteratively, we have that as long as 
\begin{equation}
\label{eq.condrec}
2^m (1-\tau_\circ)^{m-2} \eps_k \leq h_\circ,
\end{equation}
then 
\begin{equation}
\label{eq.resrec}
\underset{Q_{2^{-m}}(x_\circ^r)}{{\rm osc}_n} \de E_k^r \leq 2\eps_k(1-\tau_\circ)^{m-1}.
\end{equation}
\\
{\it Case 2: $x_\circ \in \de E_k\cap \de \mathcal{O}_k\cap B_{1/2}$.} The second case is when $x_\circ$ belongs to the contact set of the thin obstacle, $x_\circ \in \de E_k\cap \de \mathcal{O}_k$, where $\de\mathcal{O}_k := \Phi(\{x_{n-1} = x_n = 0\})$. After a translation and a rotation, up to redefining $\Phi$ if necessary, we can put ourselves in Case 1 (see Lemma~\ref{lem.asumpPhi} with $\rho = 1$), so that
\begin{equation}
\label{eq.condrec2}
2^m (1-\tau_\circ)^{m-2} \eps_k \leq h_\circ\quad \Rightarrow  \quad \underset{Q_{2^{-m}}(x_\circ^r)}{{\rm osc}_n} \de E_k^r \leq 2\eps_k(1-\tau_\circ)^{m-1}.
\end{equation}
We must point out here that, a priori, the oscillation might be in a direction different from $\boldsymbol{e}_n$ due to the rotation coming from Lemma~\ref{lem.asumpPhi}. However, since the rotation tends to the identity as $\eps_k\downarrow 0$, we may also assume that for $\eps_k$ small enough, the previous also holds. 
\\[0.2cm]
{\it Case 3: ${\rm dist}(x_\circ, \de E_k\cap \de \mathcal{O}_k) \ge \frac18$.} Follows exactly as the two previous cases, using Proposition~\ref{prop.harnackthino} instead of Proposition~\ref{prop.harnacktype}, yielding again \eqref{eq.condrec2}.
\\[0.2cm]
{\it Case 4: $2^{-p-1}\le {\rm dist}(x_\circ, \de E_k\cap \de \mathcal{O}_k) \le 2^{-p}$ for $p \ge 3$.} This is a combination of Case 2 and Case 3. We apply Case 2 and rescale, until we can apply Case 3, so that \eqref{eq.condrec2} holds again.

That is, \eqref{eq.condrec2} holds for all $x_\circ \in \de E_k \cap B_{1/2}$. Let $m_k$ denote the largest $m$ we can take for every $\eps_k$ such that \eqref{eq.condrec} holds. Clearly, $m_k\to \infty$ as $k \to \infty$, since $\eps_k \to 0$. If we consider the rescaled sets in the $\boldsymbol{e}_n$ direction, $E_k^{\eps_k}$, we have that for every $m \leq m_k$, 
\begin{equation}
\label{eq.resrec2}
\underset{Q_{2^{-m}}(x_\circ)}{{\rm osc}_n} \de E_k^{\eps_k} \leq 2(1-\tau_\circ)^{m-1}.
\end{equation}

In particular, there exists a H\"older modulus of continuity as $\eps_k \to 0$ controlling the boundaries $\de E_k^{\eps_k}$. By Arzel\`a-Ascoli, up to subsequences, $\de E_k^{\eps_k}$ converges in the Hausdorff distance to the graph of some H\"older continuous function, $u$.
\end{proof}

\begin{lem}
\label{lem.ctop}
The function $u \in C^{0, a}(\overline{B_{1/2}'})$ from the Lemma~\ref{lem.Eeps} is a viscosity solution to the classical thin obstacle problem with $u(0) = 0$. That is, $u$ fulfils
\begin{equation}
  \label{eq.thinobst}
  \left\{ \begin{array}{rcll}
  \Delta u &=&0& \textrm{ in } B_{1/2}' \setminus \left(\{x_{n-1} = 0 \}\cap \{u = 0\}\right)\\
  \Delta u &\leq&0& \textrm{ on } \{x_{n-1} = 0 \}\cap \{u = 0\}\\
  u&\geq&0& \textrm{ on } \{x_{n-1} = 0 \},
  \end{array}\right.
\end{equation}
in the viscosity sense. In particular, 
\begin{equation}
\label{eq.regtopclass}
\|u\|_{C^{1,1/2}\left(\overline{B_{1/4}'\cap \{x_{n-1}\geq 0\}}\right)}+\|u\|_{C^{1,1/2}\left(\overline{B_{1/4}'\cap \{x_{n-1}\leq 0\}}\right)}\leq C,
\end{equation}
for some constant $C$ depending only on $n$. That is, $u$ is $C^{1,1/2}$ up to $\{x_{n-1} = 0\}$ in either side.
\end{lem}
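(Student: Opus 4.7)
The plan is to verify the three viscosity conditions in~\eqref{eq.thinobst} separately by a standard sliding-barrier argument, combining the one-sided mean-curvature inequality of Lemma~\ref{lem.minvis} with the linearization
\[
M(D^2(2\eps_k\psi),\nabla(2\eps_k\psi)) = 2\eps_k\,\Delta\psi + O(\eps_k^3),
\]
valid for smooth $\psi$ with bounded derivatives, of the mean-curvature operator in~\eqref{eq.H} around flat graphs.  Once~\eqref{eq.thinobst} is at hand, the estimate~\eqref{eq.regtopclass} will follow from the classical $C^{1,1/2}$ regularity theory for the Signorini problem \cite{AC04,ACS08}.

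The conditions $u(0)=0$ and $u\ge 0$ on $\{x_{n-1}=0\}\cap B'_{1/2}$ are read off directly from the hypotheses.  Since $0\in\partial E_k^r$ for every $k$, taking $x'=0$ in~\eqref{eq.gammaclose} gives $|u(0)|\le\eps_k^\beta\to 0$.  For the obstacle inequality, the inclusion $\Phi_k(\Lambda_{\gamma_k,\theta_k+\eps_k})\subset E_k$ combined with~\eqref{eq.Phihyp} (which bounds the $C^{1,1}$ norm of $\Phi_k$ by $\eps_k^{1+1/2}$) implies, in the rotated frame, that $\partial E_k^r\cap\{x_{n-1}=0\}$ lies in $\{x_n\ge -C\eps_k^{1+1/2}\}$; after the vertical rescaling by $1/(2\eps_k)$ and the limit $k\to\infty$ this yields $u\ge 0$ on $\{x_{n-1}=0\}\cap B'_{1/2}$.

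For $\Delta u=0$ on the open set $B'_{1/2}\setminus(\{x_{n-1}=0\}\cap\{u=0\})$, fix $x_\circ$ in this set.  Since either $(x_\circ)_{n-1}\ne 0$ or $u(x_\circ)>0$, and since by~\eqref{eq.Phihyp} the obstacle $\Phi_k(\Lambda^{\delta_k})$ lies in an $O(\eps_k^{1+1/2})$-tube of $\{x_{n-1}=0,\,x_n\le 0\}$, the obstacle stays at positive distance from the point of $\partial E_k^r$ corresponding to $x_\circ$; hence for $k$ large $\partial E_k^r$ is a genuine minimal surface in a whole neighbourhood of that point, and the two-sided form of Lemma~\ref{lem.minvis} applies there.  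Given a smooth $\varphi$ touching $u$ from above at $x_\circ$, make the touching strict by replacing $\varphi$ with $\varphi-\eta|x'-x_\circ|^2$, then slide the graph $\{x_n = 2\eps_k(\varphi-\eta|x'-x_\circ|^2)+c\}$ in the $\boldsymbol{e}_n$ direction from a position above $\partial E_k^r$ downwards until first contact, at some $x_k\to x_\circ$.  The minimal-surface case of Lemma~\ref{lem.minvis} then gives $M\ge 0$ for the barrier at $x_k$; inserting this into the displayed linearization and letting first $k\to\infty$ and then $\eta\downarrow 0$ produces $\Delta\varphi(x_\circ)\ge 0$.  The symmetric touching-from-below argument gives the reverse inequality, so $\Delta u(x_\circ)=0$ in the viscosity sense.

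For $\Delta u\le 0$ on the contact set $\{x_{n-1}=0\}\cap\{u=0\}$ the same sliding procedure is used, but only in the touching-from-below direction and invoking only the supersolution half of Lemma~\ref{lem.minvis}: this applies to $E_k$ in all of $B_1$ by Proposition~\ref{prop.minper}, regardless of whether the barrier eventually meets the obstacle.  The principal technical point requiring care is to guarantee that the first-contact point stays in the interior of the relevant ball; this is arranged by the strictness of the touching (via the $\eta|x'-x_\circ|^2$ correction) together with a choice of the additive constant $c=c_k=O(\eps_k^\beta)$ based on the uniform closeness~\eqref{eq.gammaclose} and on the already-established bound $u\ge 0$ on $\{x_{n-1}=0\}$; both considerations become automatic as $k\to\infty$.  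With the three conditions in~\eqref{eq.thinobst} verified, the classical $C^{1,1/2}$ regularity theory for the Signorini problem~\cite{AC04,ACS08} gives~\eqref{eq.regtopclass}.
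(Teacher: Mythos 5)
Your proposal follows essentially the same route as the paper: a Savin-type viscosity argument in which one slides smooth graphs onto $\partial E_k^r$, applies Lemma~\ref{lem.minvis} at the first-contact point, uses the linearization $M(2\eps_k D^2\psi,2\eps_k\nabla\psi)=2\eps_k\Delta\psi+O(\eps_k^3)$, and then invokes \cite{AC04} for \eqref{eq.regtopclass}. The only substantive divergence is in the touching-from-above step, and there your justification is too strong as stated. When $(x_\circ)_{n-1}=0$ but $u(x_\circ)>0$, the distance from the obstacle to the point of $\partial E_k^r$ corresponding to $x_\circ$ is only of order $\eps_k u(x_\circ)$ (the surface point sits at unrescaled height $\approx 2\eps_k u(x_\circ)$ while the top of $\Phi_k(\Lambda^{\delta})$ near $\{x_{n-1}=0\}$ sits at height $O(\eps_k^{1+1/2})$), so it is \emph{not} true that $\partial E_k^r$ is a minimal surface in a fixed neighbourhood of that point: the neighbourhood shrinks like $\eps_k$, and mere convergence $x_k\to x_\circ$ does not place the first-contact point inside it. This is precisely the case the paper does not discard: it allows $\tilde y_k$ to lie on the obstacle and uses that $\partial\Phi_k(\Lambda^{\delta})$ has mean curvature bounded below by $-C\eps_k^{1+1/2}$, so the touching graph still satisfies $M\ge -C\eps_k^{1+1/2}$, which suffices in the limit.

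Your argument can nevertheless be closed along your own lines with one extra observation: what matters is the location of the first-contact point $x_k$ itself, not of the point corresponding to $x_\circ$. By the strict touching, the \emph{vertically rescaled} height of $x_k$ converges to $u(x_\circ)>0$ (respectively $x_{k,n-1}\to(x_\circ)_{n-1}\neq 0$ in the other subcase), whereas the portion of $\Phi_k(\Lambda^{\delta})$ that meets the slab containing $\partial E_k^r$ is confined to $\{|x_{n-1}|\le C\eps_k\}$ and to rescaled heights $O(\eps_k^{1/2})$. Hence for $k$ large $x_k$ lies at positive distance from the obstacle, $\partial E_k^r$ is a genuine minimal boundary in a (possibly tiny) ball around $x_k$, and Lemma~\ref{lem.minvis}, being pointwise, applies. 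With that sentence added, your proof is complete; the remaining steps (the sign conventions, the supersolution inequality via Proposition~\ref{prop.minper}, and the reduction of \eqref{eq.regtopclass} to the Signorini theory) match the paper's.
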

\begin{proof}
The proof follows along the lines of \cite{Sav10}.

Since $\de E_k^{\eps_k}$ converges uniformly to the graph of $u$, and $\de E_k^{\eps_k} \cap \{x_{n-1} = 0\} \subset \{x_n \geq -C\eps_k\}$, we clearly have that $u \geq 0$ on $\{x_{n-1} = 0\}$. This follows since $\Phi(\Lambda_{\gamma_k, \theta_k+\eps_k})\subset E_k$. Similarly, $u(0) = 0$. 

Now take any point $x_\circ' \in B_{1/2}'$. Consider $P(x')$ a quadratic polynomial in $B_{1/2'}$, with graph touching the graph of $u$ from below at $(x_\circ', u(x_\circ'))$. Since $\de E_k^{\eps_k}$ is converging uniformly to the graph of $u$, $P(x')-c_k$ touches from below $\de E_k^{\eps_k}$ at a point $y_k$ such that $y_k \to (x_\circ', u(x_\circ'))$ as $k \to \infty$. Rescaling back, $\eps_k P(x') - \tilde c_k$ touches from below $\de E_k^r$ at $\tilde y_k$ such that $\tilde y_k' \to x_\circ'$ for some sequence $\tilde c_k$ bounded. Since $\de E_k^r$ is a supersolution being touched from below, by Lemma~\ref{lem.minvis} we have 
\[
M(\eps_k D^2 P, \eps_k \nabla P) = \eps_k \Delta P + \eps_k^3 \left(\Delta P |\nabla P|^2 - \eps_k (\nabla P)^T D^2 P\,\nabla P \right)\leq 0
\]
at $\tilde y_k'$. By letting $\eps_k \to 0$ we reach 
\[
\Delta P (x_\circ') \leq 0,
\]
so that $u$ solves $\Delta u \leq 0$ in the viscosity sense. 

On the other hand, suppose $x_\circ' \in B_{1/2}'\setminus\left( \{x_{n-1} = 0 \}\cap \{u = 0\}\right)$. Let $P(x')$ be a quadratic polynomial in $B_{1/2'}$, with graph touching the graph of $u$ from above at $(x_\circ', u(x_\circ'))$. Now, $P(x')+c_k$ touches from above $\de E_k^{\eps_k}$ at a point $y_k$ such that $y_k \to (x_\circ', u(x_\circ'))$ as $k \to \infty$. That is, $\eps_k P(x') + \tilde c_k$ touches from above $\de E_k^r$ at $\tilde y_k$ such that $\tilde y_k' \to x_\circ'$ for some sequence $\tilde c_k$ bounded. If $k$ large enough, $\tilde y'_k \in B_{1/2}'\setminus\left( \{x_{n-1} = 0 \}\cap \{u = 0\}\right)$. Therefore, either $\de E_k^r$ is a surface of minimal perimeter around $\tilde y_k$, or $\de E_k^r$ is touching $\Phi_k(\Lambda^{\delta} )$ at $\tilde y_k$. In the first case, we are already done proceeding as before, we get $M(\eps_k D^2 P, \eps_k\nabla P) \ge 0$.

Suppose then, that $\de E_k^r$ is touching $\Phi_k(\Lambda^{\delta} )$ at $\tilde y_k$. For this to happen, one must have that $\Phi_k(\Lambda^{\delta} )$ is a supersolution to the minimal perimeter problem around $\tilde y_k$, otherwise there could not be a contact point with a supersolution. However, notice that it is a supersolution with mean curvature around $\tilde y_k$ bounded from below by $-C\eps_k^{1+\frac12}$. Therefore, $M(\eps_k D^2 P, \eps_k \nabla P) \ge -C\eps_k^{1+\frac12}$ at $\tilde y_k$, and letting $k\to \infty$ we get $\Delta P(x_\circ')\ge 0$. Thus, \eqref{eq.thinobst} holds in the viscosity sense. 

Finally, the regularity of solution to the classical thin obstacle problem, \eqref{eq.regtopclass}, was first shown by Caffarelli in \cite{Caf79}; and the optimal $C^{1, 1/2}$ regularity here presented was obtained by Athanaopoulos and Caffarelli in \cite{AC04}. 
\end{proof}

We can now present the proof regarding the improvement of closeness to sets of the form $\Lambda_{{\gamma}, \theta}$, Proposition~\ref{prop.impclos}.

\begin{proof}[Proof of Proposition~\ref{prop.impclos}]
Let us argue by contradiction, and suppose that the statement does not hold. Then, there exists some $\alpha_\star\in \left(0,\frac{1}{2}\right)$ and a sequence $E_k\subset \R^n$ satisfying \eqref{eq.TOP}, such that $0\in\de E_k$, $E_k$ are $\eps_k$-close to some $\Lambda_{\gamma_k, \theta_k}$ for $\theta_k\in(0, C_\circ\eps_k)$, \eqref{eq.Phihyp} holds for $\eps = \eps_k$ (and the transformation $\Phi_k$), for some positive sequence $\eps_k \to 0$ as $k\to \infty$, but such that the conclusion does not hold for any $\rho_\circ, \eps_\circ >0$. 

By Lemma~\ref{lem.subslem} we have that 
\[
\Phi_k(\Lambda_{\gamma_k , \theta_k + C_\circ \eps_k}) \subset E_k,\quad\quad\mbox{ in } B_{1/2}.
\]
By rescaling and renaming the $\eps_k$ sequence if necessary, we can assume that $\theta_k\in(0, \eps_k)$ and $\Phi_k(\Lambda_{\gamma_k , \theta_k + \eps_k}) \subset E_k$ in $B_1$, so that we are in the same situation as in Lemma~\ref{lem.Eeps}. In particular, due to Lemma~\ref{lem.Eeps}, the sequence $\de E_k^{\eps_k}$ approaches (in Hausdorff distance) a function $u$ in $B_{1/2}'\times(-1, 1)$, which by Lemma~\ref{lem.ctop} is a solution to a classical thin obstacle problem. Thanks to the regularity of $u$, and the fact that $u(0) = 0$ and $\nabla_{x''} u (0) = 0$, we have that 
\[
\left|u(x') - \de^+_{n-1} u (0)(x'_{n-1})_+ - \de^-_{n-1} u (0)(x'_{n-1})_-\right| \leq C \rho^{3/2},\quad \mbox{in } B_{2\rho}',
\]
for any $\rho> 0 $ and for some constant $C$ depending only on $n$. Here, we have denoted $a_+ = \max\{a, 0\}$, $a_- = \min\{ a, 0\}$, and 
\[
\de^\pm_{n-1} u (0) := \lim_{\eta\downarrow 0} \frac{\de u}{\de x_{n-1}'} (0,\dots,0,\pm \eta),
\]
i.e., the limit of the derivative in the $\boldsymbol{e}_{n-1}$ direction coming from $\{x_{n-1} > 0\}$ or $\{x_{n-1} < 0\}$ (which exist by the regularity up to the contact set). Notice, moreover, that since $\Delta u\leq 0$ around 0, we must have $\de^-_{n-1} u (0) \geq \de^+_{n-1} u (0)$. In particular, thanks to the closeness of $\de E_k^{\eps_k}$ to the graph of $u$, we have that 
\[
\de E_k^{\eps_k} \cap \left(B_{3\rho/2}'\times(-1, 1)\right) \subset \left\{\left| x_n - \de^+_{n-1} u (0)(x'_{n-1})_+ - \de^-_{n-1} u (0)(x'_{n-1})_-\right| \leq C\rho^{1/2}\right\},
\]
which, after rescaling implies that $\de E_k^r$ is at distance at most $C\eps_k \rho^{3/2}$ from some $\Lambda_{\tilde{\gamma}, \tilde{\theta}}$ in $B_\rho$, given by the graph of $\eps_k \de^+_{n-1} u (0)(x'_{n-1})_+ + \eps_k \de^-_{n-1} u (0)(x'_{n-1})_-$. Now, simply take $\rho$ small enough depending only on $n$ and $\alpha_\star$ such that $C\rho^{3/2} \leq \rho^{1+\alpha_\star}$, and we reach a contradiction (notice that such $\rho$ exists because $\alpha_\star < \frac{1}{2}$).
\end{proof}

\section{Improvement of closeness in non-flat configuration}
\label{sec.5}
In this section we study the complementary case to the one in the previous section: the case where $E$ is $\eps$-close to a \emph{non-flat} ($\theta\gtrsim \eps$) wedge $\Lambda_{{\gamma}, \theta}$. Under this condition, thanks to Lemma~\ref{lem.cneps}, there exists a full contact set, so that the study of the regularity becomes a known matter. 

We state and prove now the lemma that will allow us to conclude the proof of Theorem~\ref{thm.main}. 

\begin{lem}
\label{lem.gamma}
There exists $\eps_\circ$ depending only on $n$ such that the following statement holds: 

Let $E \subset \R^n$ satisfying \eqref{eq.TOP} with $0\in \de E$ be such that for some $\Lambda_{{\gamma}, \theta}$, and $\eps \in (0, \eps_\circ)$,
\begin{equation}
\label{eq.gamma0}
\Phi(\Lambda_{{\gamma}, \theta+\eps})\subset E\subset \Phi(\Lambda_{{\gamma}, \theta-\eps}),\quad\mbox{ in } B_1,
\end{equation}
where $\Phi$ satisfies \eqref{eq.Phihyp}.

Then,
\begin{equation}
\label{eq.gamma1}
\de E \cap \overline{B_{1/2}} = \overline{\Gamma_+}\cup \overline{\Gamma_-},
\end{equation}
where 
\begin{equation}
\label{eq.gamma2}
\Gamma_{\pm} = \de E \cap B_{1/2} \cap \Phi(\{\pm x_{n-1} > 0\}),
\end{equation}
and 
\begin{equation}
\label{eq.gamma3}
\overline{\Gamma_\pm} \cap \Phi(\{x_{n-1} = 0\}) \cap \overline{B_{1/2}}\subset \Phi(\{x_{n-1} = x_n = 0\}).
\end{equation}

Moreover, for each $\beta\in (0,1)$, $\Gamma_+$ and $\Gamma_-$ are $C^{1, \beta}$ graphs up to the boundary in the $e_{\gamma+\theta}$ and $e_{\gamma-\theta}$ directions respectively,  with $C^{1,\beta}$-norms bounded by $C\eps$, where $C$ depends only $n$ and $\beta$.
\end{lem}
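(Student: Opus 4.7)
My plan is to deduce \eqref{eq.gamma1}--\eqref{eq.gamma3} from the sandwich \eqref{eq.gamma0} by a direct computation, and then obtain the $C^{1,\beta}$ regularity of $\Gamma_\pm$ by viewing them as nearly flat minimal surfaces with flat Dirichlet data on $\de \mathcal O$.

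\textbf{Step 1 (decomposition).} The key observation is that for any $\theta'\in[0,\tfrac\pi2-|\gamma|)$, the wedge $\Lambda_{\gamma,\theta'}$ meets $\{x_{n-1}=0\}$ exactly in $\{x_{n-1}=0,\,x_n\le 0\}$: both defining inequalities $e_{\gamma\pm\theta'}\cdot x\le 0$ reduce to $\cos(\gamma\pm\theta')\,x_n\le 0$ there, which (using positivity of the cosine) is $x_n\le 0$. Applying this with $\theta'=\theta\pm\eps$ yields
\[
\bigl(\overline{\Lambda_{\gamma,\theta-\eps}}\setminus\mathrm{int}(\Lambda_{\gamma,\theta+\eps})\bigr)\cap\{x_{n-1}=0\}=\{x_{n-1}=x_n=0\}.
\]
Combining with $\de E\cap B_1\subset\Phi\bigl(\overline{\Lambda_{\gamma,\theta-\eps}}\setminus\mathrm{int}(\Lambda_{\gamma,\theta+\eps})\bigr)$ from \eqref{eq.gamma0} gives $\de E\cap B_1\cap\Phi(\{x_{n-1}=0\})\subset\de\mathcal O$, which is \eqref{eq.gamma3}. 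The splitting \eqref{eq.gamma1}--\eqref{eq.gamma2} follows: the sets $\Gamma_\pm:=\de E\cap B_{1/2}\cap\Phi(\{\pm x_{n-1}>0\})$ are disjoint, relatively open in $\de E\cap B_{1/2}$, exhaust it up to $\de\mathcal O$, and can meet only there.

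\textbf{Step 2 (graph representation).} Consider $\Gamma_+$; the case of $\Gamma_-$ is symmetric. On $\{x_{n-1}>0\}$ the binding constraint in $\Lambda_{\gamma,\theta'}$ is $e_{\gamma+\theta'}\cdot x\le 0$, so \eqref{eq.gamma0} places $\Gamma_+$ in the $\Phi$-image of the slab between the two hyperplanes $\{e_{\gamma+\theta\pm\eps}\cdot x=0\}$, which meet at angle $2\eps$ through the origin; hence $\Gamma_+$ lies in a tubular neighborhood of $\Phi(\{e_{\gamma+\theta}\cdot x=0\})$ of width $O(\eps)$ in $B_1$. Pulling back by $\Phi^{-1}$ (which by \eqref{eq.Phihyp} is a $C^{1,1}$-perturbation of the identity of size $O(\eps^{3/2})$) and choosing an orthonormal frame with $e_{\gamma+\theta}$ as the vertical direction, $\Phi^{-1}(\Gamma_+)$ becomes the graph $\{t=f(s)\}$ of a function $f$ on an open half-plane of $\R^{n-1}$, with $\|f\|_{L^\infty}\le C\eps$. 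By Proposition~\ref{prop.minper}, in $\Phi(\{x_{n-1}>0\})\cap B_{3/4}$ the obstacle is absent and $\Gamma_+$ is a minimal surface; thus $f$ satisfies a uniformly elliptic quasilinear perturbation of the minimal surface equation, with perturbation of size $O(\eps^{3/2})$ coming from $\Phi^{-1}$.

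\textbf{Step 3 (boundary Schauder estimate).} By Step 1, $f$ extends continuously up to the boundary of its domain with $f=0$ on the image of $\de \mathcal O$ in our graph coordinates. Because $e_{\gamma+\theta}$ lies in the span of $\boldsymbol e_{n-1},\boldsymbol e_n$ and $\de\mathcal O=\{x_{n-1}=x_n=0\}$ is its orthogonal complement, this image is simply the $(n-2)$-plane bounding the half-plane domain of $f$. Interior De Giorgi theory (Theorem~\ref{eq.flatimpliessmooth}) together with standard up-to-the-boundary Schauder theory for quasilinear uniformly elliptic equations with small gradient and flat zero Dirichlet data then gives, for any $\beta\in(0,1)$, $\|f\|_{C^{1,\beta}}\le C\|f\|_{L^\infty}\le C\eps$ with $C=C(n,\beta)$. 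Mapping back through $\Phi$, which is $C^{1,1}$-close to the identity, preserves this bound up to constants. The main technical obstacle is the up-to-the-boundary estimate: the sandwich directly controls only $\|f\|_{L^\infty}$, and bootstrapping to a global gradient bound $\|\nabla f\|_{L^\infty}=O(\eps)$ requires boundary regularity for almost-flat minimal graphs with flat Dirichlet data. A secondary subtlety is that the graph direction must be $e_{\gamma+\theta}$ rather than $\boldsymbol e_n$: for $\theta$ near $\tfrac\pi2-|\gamma|$ the surface $\Gamma_+$ becomes nearly tangent to $\Phi(\{x_{n-1}=0\})$ and no graph in $\boldsymbol e_n$ exists, while a graph in $e_{\gamma+\theta}$ always does.
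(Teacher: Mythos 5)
Your geometric skeleton (split along $\de\mathcal O$, rotate so each face is nearly horizontal, represent each piece as a graph with zero data on the flat $(n-2)$-plane $\Phi(\{x_{n-1}=x_n=0\})$) is the same as the paper's, and your Step 1 is fine. But Steps 2--3 contain a genuine gap: the claim that ``in $\Phi(\{x_{n-1}>0\})\cap B_{3/4}$ the obstacle is absent and $\Gamma_+$ is a minimal surface'' is false. The constraint in \eqref{eq.TOP} is the \emph{thick} wedge $\Phi(\Lambda^{\delta})$ with $\Lambda^\delta=\Lambda_{0,\pi/2-\delta}$, which is a two-sided sliver $\{x_n\le -\cot\delta\,|x_{n-1}|\}$ hugging $\mathcal O$; it protrudes into $\{\pm x_{n-1}>0\}$. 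Since $\delta$ is arbitrary relative to $\eps$, and $\theta+\eps$ may be as large as $\pi/2-|\gamma|$, the face of $\Phi(\Lambda^{\delta})$ need not lie inside $\mathrm{int}\,\Phi(\Lambda_{\gamma,\theta+\eps})$, so nothing prevents $\Gamma_+$ from touching it --- near the edge always, and along the whole face when $\gamma+\theta+\eps$ is within $O(\eps)$ of $\pi/2-\delta$. Consequently $f$ does not solve a Dirichlet problem for a quasilinear elliptic equation; it solves a \emph{variational inequality} (thick obstacle problem for the mean curvature operator with a $C^{1,1}$ obstacle and flat boundary data), and ``standard up-to-the-boundary Schauder theory'' does not apply where the constraint is active. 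This is precisely why the paper's proof invokes the Br\'ezis--Kinderlehrer interior theory and Jensen's boundary regularity for variational inequalities in the case $\Phi\not\equiv{\rm id}$ (and an odd-reflection trick across $\{x_{n-1}=x_n=0\}$ when $\Phi\equiv{\rm id}$, where the obstacle's faces are hyperplanes and hence harmless), and it is why the conclusion is only $C^{1,\beta}$ for every $\beta<1$ rather than smooth.

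A second, smaller issue is that you assert rather than prove the graph representation and the gradient bound: containment in a slab of width $O(\eps)$ does not make a set a graph, and you yourself flag the global bound $\|\nabla f\|_{L^\infty}=O(\eps)$ as ``the main technical obstacle'' without closing it. The closure is the paper's covering argument: at a point of $\Gamma_+$ at distance $d$ from $\de\mathcal O$, the two bounding hyperplanes $\{e_{\gamma+\theta\pm\eps}\cdot x=0\}$ are only $O(\eps d)$ apart, so in the ball of radius $d/2$ the surface is $O(\eps)$-flat \emph{relative to the ball}; applying the $\eps$-regularity theorem there (Theorem~\ref{thm.giuch8}, or its thick-obstacle version Theorem~\ref{thm.flat_obst_p} when the ball meets $\Phi(\Lambda^{\delta})$ --- again the obstacle cannot be ignored) gives the local graph property and the Lipschitz bound $|\nabla f|\le C\eps$ up to the edge, after which the obstacle-problem regularity theory takes over. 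With these two repairs your argument coincides with the paper's.
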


\begin{rem}
\label{rem.cinf}
A a direct consequence of the $C^{1,\beta}$ estimates from Lemma \ref{lem.gamma} there exists $\Lambda_{ \gamma_\star, \theta_\star}$ as in \eqref{eq.eandtheta} such that for any $\bar \alpha \in (0, 1/2)$,
\[
E \mbox{ is } C\eps r^{1+\bar\alpha}\mbox{-close to }\Lambda_{\gamma_\star, \theta_\star} \mbox{ in } B_r,\quad\mbox{ for all } r \in (0, 1/2),
\]
for some constant $C$ depending only on $n$. Moreover, 
\[
|\bar \gamma - \gamma|+|\bar \theta - \theta|\leq C \eps,
\]
for some constant $C$ depending only on $n$. This will be useful later on in the paper. In fact, we could clearly take $\bar \alpha\in (0,1)$ but we will only need $\bar \alpha<1/2$ later on (see Proposition \ref{prop.wow}).
\end{rem}

In order to prove Lemma~\ref{lem.gamma} we need a version for thick smooth obstacles of the following standard result on regularity of flat minimizers of the perimeter.

\begin{thm}[{\cite[Chapter 8]{Giu84}}]
\label{thm.giuch8}
There exists $\eta_\circ$ small depending only on $n$ such that the following statement holds:

Let $E\subset\R^n$ be a minimizer of the perimeter in $B_1$ such that 
\[
\{x_n \leq -\eta\} \subset E \subset \{x_n \leq \eta\},\quad\mbox{in } B_1,
\]
for some $\eta \in (0, \eta_\circ)$.

Then, there exists a map $\varphi : B_{1/2}' \to \R$ such that
\[
\de E = \{x = (x', x_n)\subset \R^n : x_n = \varphi(x')\}\quad\mbox{ in } B_{1/2}'\times\left(-1/2,1/2\right),
\]
where $\|\varphi\|_{C^k(B_{1/2}')} \leq C(n, k)\,\eta$, for some constant $C$ depending only on $n$ and $k$.
\end{thm}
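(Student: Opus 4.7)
The plan is to combine De Giorgi's improvement of flatness (already recalled in the excerpt as Theorem~\ref{eq.flatimpliessmooth} and the iterated estimate~\eqref{eq.improofflatness}) with standard elliptic bootstrap for the minimal surface equation, and then extract the linear dependence on $\eta$ by a scaling/compactness argument.

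\textbf{Step 1: reduction to a $C^{1,\alpha}$ graph.} Fix any $\alpha\in(0,1)$. If $\eta_\circ$ is chosen small enough (depending on $n$ and $\alpha$), the flatness hypothesis $\partial E\subset\{|x_n|\le \eta\}\subset\{|x_n|\le\eps_\circ(n,\alpha)\}$ allows us to invoke De Giorgi's improvement of flatness \eqref{eq.improofflatness} iteratively in a geometric sequence of balls around each point $x_\circ\in\partial E\cap B_{3/4}$. In a standard manner this yields a unit vector $\nu(x_\circ)$ and the estimate
\[
\sup_{B_r(x_\circ)\cap\partial E}|\nu(x_\circ)\cdot(x-x_\circ)|\le C\eta\, r^{1+\alpha},\qquad 0<r<\tfrac14,
\]
with $\nu(x_\circ)$ close to $\boldsymbol e_n$. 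Consequently $\partial E\cap B_{1/2}$ is the graph $\{x_n=\varphi(x')\}$ of a function $\varphi\in C^{1,\alpha}(B_{1/2}')$ with $\|\varphi\|_{C^{1,\alpha}(B_{1/2}')}\le C(n,\alpha)\,\eta$.

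\textbf{Step 2: bootstrap via the minimal surface equation.} Because $E$ is a perimeter minimizer and $\partial E$ is now known to be a $C^{1,\alpha}$ graph, $\varphi$ satisfies the minimal surface equation
\[
H\varphi=\mathrm{div}\!\left(\frac{\nabla\varphi}{\sqrt{1+|\nabla\varphi|^2}}\right)=0\quad\text{in }B_{1/2}',
\]
in the classical sense. Since $\|\nabla\varphi\|_{L^\infty}\le C\eta$ is small, this is a uniformly elliptic quasilinear equation whose coefficients are smooth functions of $\nabla\varphi$. Schauder estimates for linear equations applied to first differences of $\varphi$ give $\varphi\in C^{2,\alpha}$; differentiating the equation and iterating yields $\varphi\in C^k$ for every $k$, with bounds depending on $\|\nabla\varphi\|_{C^{0,\alpha}}$.

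\textbf{Step 3: extracting the linear dependence $\|\varphi\|_{C^k}\le C\eta$.} This is the step that requires real care, because the bootstrap in Step~2 only gives $\|\varphi\|_{C^k}\le C(n,k)$. To upgrade to a bound linear in $\eta$, I would argue by a standard compactness/linearization scheme: suppose towards contradiction that there exist minimizers $E_j$ with flatness $\eta_j\downarrow 0$ but $\|\varphi_j\|_{C^k(B_{1/4}')}/\eta_j\to\infty$. Rescaling $u_j:=\varphi_j/\|\varphi_j\|_{L^\infty}$, the uniform $C^{k+1}$ estimates of Step~2 give a subsequential $C^k$-limit $u_\infty$ with $\|u_\infty\|_{L^\infty}\le 1$. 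Writing the minimal surface equation for $\varphi_j=\|\varphi_j\|_{L^\infty}u_j$ and sending $\|\varphi_j\|_{L^\infty}\to 0$, the equation linearizes to $\Delta u_\infty=0$ in $B_{1/2}'$. Interior estimates for harmonic functions then give $\|u_\infty\|_{C^k(B_{1/4}')}\le C(n,k)$, contradicting the blow-up of the $C^k$ norm along the sequence. This yields $\|\varphi\|_{C^k(B_{1/2}')}\le C(n,k)\,\eta$ as desired.

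\textbf{Main obstacle.} Step~1 is standard once De Giorgi's theorem is available (which the excerpt explicitly recalls), and Step~2 is routine elliptic bootstrap once smallness of $\nabla\varphi$ is known. The genuinely delicate point is Step~3: one must upgrade the qualitative smoothness to the \emph{linear} quantitative bound $C(n,k)\eta$. The compactness-linearization trick outlined above is the cleanest route, but it relies crucially on the facts that (i)~$\varphi\equiv 0$ solves the minimal surface equation, so the linearized operator around $0$ is the Laplacian, and (ii)~the estimates of Step~2 are uniform in $\eta\in(0,\eta_\circ)$.
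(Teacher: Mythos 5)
This theorem is quoted from Giusti's book and the paper does not reprove it; its own account is the two-step sketch in Remark~\ref{rem.impflat}: (i) iterate De Giorgi's improvement of flatness \eqref{eq.improofflatness} to get $|\nu(x)-\nu(y)|\le C\eta|x-y|^\alpha$, hence a $C^{1,\alpha}$ graph with seminorm $C\eta$, and (ii) pass to $C^k$ with bounds $C(n,k)\eta$ by interior Schauder estimates for the graph equation. Your Steps~1 and~2 coincide with this. The problem is your Step~3, which is both unnecessary and, as written, circular. Unnecessary: once $\|\nabla\varphi\|_{C^{0,\alpha}}\le C\eta$, write the minimal surface equation in non-divergence form $\sum_{ij}a_{ij}(\nabla\varphi)\,\partial_{ij}\varphi=0$ with $a_{ij}(p)=\delta_{ij}-p_ip_j/(1+|p|^2)$; the coefficients are uniformly elliptic with uniformly bounded $C^{0,\alpha}$ norm, and the interior Schauder estimate for this \emph{homogeneous} equation is already linear in the solution: $\|\varphi\|_{C^{2,\alpha}(B_{r'})}\le C\|\varphi\|_{L^\infty(B_r)}\le C\eta$. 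Differentiating the equation and repeating gives $\|\varphi\|_{C^m}\le C(n,m)\eta$ for every $m$. Your worry that ``the bootstrap in Step~2 only gives $\|\varphi\|_{C^k}\le C(n,k)$'' comes from discarding this linearity.

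Circular: in your contradiction argument you normalize $u_j=\varphi_j/\|\varphi_j\|_{L^\infty}$ and claim that ``the uniform $C^{k+1}$ estimates of Step~2 give a subsequential $C^k$-limit.'' But the only bounds Step~2 (as you use it) provides are $\|\varphi_j\|_{C^{k+1}}\le C(n,k)$, which after division by $\|\varphi_j\|_{L^\infty}\to 0$ become $\|u_j\|_{C^{k+1}}\le C/\|\varphi_j\|_{L^\infty}\to\infty$; there is no uniform bound on $u_j$ at any order above what Step~1 gives, hence no compactness in $C^k$ for $k\ge 2$. In other words, the compactness scheme presupposes exactly the quantitative bound (one derivative higher, after normalization) that it is meant to prove. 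A secondary issue is that $\|\varphi_j\|_{L^\infty}$ may be much smaller than $\eta_j$, so the normalization by $\|\varphi_j\|_{L^\infty}$ does not directly control $\|\varphi_j\|_{C^k}/\eta_j$ either. Replace Step~3 by the linear Schauder argument above and the proof is complete and matches the standard one the paper cites.
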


Let us comment on the standard proof of the previous theorem.
\begin{rem}
\label{rem.impflat}
Theorem \ref{thm.giuch8} is usually shown in two steps. 
First, one iterates \eqref{eq.improofflatness} obtain
\[
|\nu(x)-\nu(y)|\leq C\eta|x-y|^\alpha,
\]
for $\alpha > 0$, and where $\nu(x)$ for $x \in \de E$ denotes the unit normal vector to $\de E$ pointing outwards $E$.
This $C^\alpha$ estimate for the normal $\nu$ is a consequence of the improvement of flatness property \eqref{eq.improofflatness}. 

Second, one improves this $C^{1,\alpha}$ estimate to obtain the $C^k$ regularity using interior Schauder estimates for graphs.

Comparing normal vectors is like comparing the corresponding tangent hyperplanes (or half-spaces). A similar approach is what inspired part of this work, where we compare sets of the form $\Lambda_{{\gamma}, \theta}$ instead of half-spaces to get the regularity. 
\end{rem}

The version of the previous result we will need is the following
\begin{thm}
\label{thm.flat_obst_p}
There exists $\eta_\circ$ small depending only on $n$ such that the following statement holds: 

Assume $\eta\in(0,\eta_\circ)$ and that $\Phi$ satisfies \eqref{eq.Phihyp} with $\eps\in(0,\eta)$. 
 Let $E\subset \R^n$, satisfying 
 \[\Phi\big(\{x_n\le0\}\big)\cap  B_1\subset E,\]
 \[
 P(E; B_1) \le P(F; B_1)\quad \forall F \mbox{ such that } E\setminus B_1= F\setminus B_1,   \ \Phi\big(\{x_n\le0\}\big)\cap B_1\subset F. 
 \]
Assume that for some $b\in(-1/2, 1/2)$
 \[
  \{x_n \leq b-\eta \} \subset E \subset \{x_n \leq b + \eta\},\quad\mbox{ in } B_1.
 \]

Then, there exists a map $\varphi : B_{1/2}' \to \R$ such that
\begin{equation}\label{eq.graph!!!}
\de E = \{x = (x', x_n)\subset \R^n : x_n = \varphi(x')\}\quad\mbox{ in } B_{1/2}'\times\left(b-1/4,b+1/4\right),
\end{equation}
where $\|\varphi\|_{C^{1,1}(B_{1/2}')} \leq C\eta$, for some constant $C$ depending only on $n$.
\end{thm}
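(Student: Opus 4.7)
The strategy is to reduce Theorem~\ref{thm.flat_obst_p} to a classical obstacle problem for the minimal-surface operator with a very flat $C^{1,1}$ obstacle. As a preliminary remark, observe that \eqref{eq.Phihyp} implies that the obstacle boundary $\Phi(\{x_n=0\})$ is, inside the ball $B_{3/4}$, the graph $\{x_n=\psi(x')\}$ of some $C^{1,1}$ function $\psi:B_{3/4}'\to\R$ with $\psi(0)=0$, $\nabla\psi(0)=0$, and $\|\psi\|_{C^{1,1}(B_{3/4}')}\leq C\eps^{3/2}\leq C\eta^{3/2}$; in particular the obstacle is a very mild $C^{1,1}$ perturbation of the flat hyperplane $\{x_n=0\}$.

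First, I would show that $\partial E\cap\bigl(B_{1/2}'\times(b-1/4,b+1/4)\bigr)$ is itself a graph $\{x_n=\varphi(x')\}$ of a continuous function $\varphi:B_{1/2}'\to\R$ satisfying $\|\varphi-b\|_{C^{0,\alpha}}\leq C\eta$ for some $\alpha\in(0,1)$ depending only on $n$. This follows from an iteration of Proposition~\ref{prop.harnackthino} at every dyadic scale around every boundary point $x_\circ\in\partial E\cap B_{3/4}$, along the lines of Case~3 and Case~4 in the proof of Lemma~\ref{lem.Eeps}. The key verification is that at scale $r=2^{-m}$, the rescaled diffeomorphism $\tilde\Phi(y):=r^{-1}\Phi(x_\circ+ry)$ still satisfies $|D^2\tilde\Phi|\leq\tilde\eps^{3/2}$ with $\tilde\eps$ below the current oscillation $\tilde h\leq C\eta(1-\tau_\circ)^m$: since $|D^2\tilde\Phi|\leq r|D^2\Phi|\leq 2^{-m}\eta^{3/2}$, it suffices that $2^{-2m/3}\leq(1-\tau_\circ)^m$, which holds provided $\tau_\circ$ is small enough (and one may shrink $\tau_\circ$ given by Proposition~\ref{prop.harnackthino} if necessary). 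The resulting geometric decay of the vertical oscillation of $\partial E$ in cylinders of radius $r$ around any $x_\circ$ both forces $\partial E$ to be a graph --- two distinct sheets above the same $x'$ would produce a positive lower bound on the oscillation at arbitrarily small scales, contradicting the decay --- and yields the claimed $C^{0,\alpha}$ bound.

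Second, since $\Phi(\{x_n\leq 0\})\subset E$ translates into $\varphi\geq\psi$ on $B_{1/2}'$, since by Proposition~\ref{prop.minper} the set $E$ is a local perimeter minimizer away from the obstacle and hence $\varphi$ satisfies the minimal-surface equation
\[
M(D^2\varphi,\nabla\varphi)=0\quad\text{on }\{\varphi>\psi\},
\]
and since $\varphi=\psi$ on the coincidence set, the function $\varphi$ is exactly a solution to the classical obstacle problem for the minimal-surface operator with $C^{1,1}$ obstacle $\psi$. Invoking the known interior $C^{1,1}$ regularity for such obstacle problems (cf.~\cite{Kin73,BK}) yields
\[
\|\varphi\|_{C^{1,1}(B_{1/2}')}\leq C\bigl(\|\psi\|_{C^{1,1}(B_{3/4}')}+\|\varphi\|_{L^\infty(B_{3/4}')}\bigr)\leq C\eta,
\]
which gives \eqref{eq.graph!!!} together with the desired estimate.

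The main technical hurdle is the iteration step establishing the graph property and the preliminary $C^{0,\alpha}$ bound: one must carefully balance the \emph{linear} shrinking of $|D^2\Phi|$ under rescaling against the \emph{geometric} decay of the oscillation, in order to ensure that the assumption $\tilde\eps<\tilde h$ of Proposition~\ref{prop.harnackthino} persists at every dyadic scale. Once this is in place, the reduction to the classical obstacle problem and the citation of the well-known $C^{1,1}$ theory close the proof.
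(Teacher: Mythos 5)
Your first step (the dyadic iteration of Proposition~\ref{prop.harnackthino}, with the bookkeeping that the rescaled diffeomorphism keeps satisfying \eqref{eq.Phihyp}) is sound and is essentially what the paper does to get compactness; it yields that $\partial E$ is the graph of a function $\varphi$ with $[\varphi]_{C^{0,\alpha}}\le C\eta$ for some \emph{small} $\alpha$ determined by $\tau_\circ$. The gap is in the jump from there to Br\'ezis--Kinderlehrer. Their Theorem~1 is a result for \emph{uniformly elliptic} variational inequalities: to apply it to the area functional you must first know that $\varphi$ is Lipschitz (indeed with $|\nabla\varphi|\le C\eta$ if you want the final constant to be $C\eta$), since the ellipticity constants of $H$ degenerate as the gradient grows. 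A $C^{0,\alpha}$ bound with small $\alpha$ does not give a gradient bound, so as written the hypothesis of \cite{BK} is not verified. The paper inserts exactly the missing step: a compactness/improvement-of-flatness argument in which the vertical rescalings $(x',x_n/\eta)$ of $\partial E$ (compact by Proposition~\ref{prop.harnackthino}) converge to a \emph{harmonic} function --- the obstacle disappears in the limit because $|D^2\Phi|\le\eta^{3/2}=o(\eta)$ --- and the $C^{1,1}$ regularity of harmonic functions then gives the geometric improvement \eqref{eq.impr111}, whose iteration yields $\|\varphi\|_{C^{1,1/4}}\le C\eta$. Only with that Lipschitz bound in hand does the reduction to a uniformly elliptic obstacle problem and the citation of \cite{BK} go through. (One could alternatively try to import an a priori gradient estimate for nonparametric obstacle problems from the classical literature, but that is a separate nontrivial ingredient which your proposal neither proves nor cites.)

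A smaller point: your closing inequality $\|\varphi\|_{C^{1,1}}\le C(\|\psi\|_{C^{1,1}}+\|\varphi\|_{L^\infty})\le C\eta$ cannot be right as stated, since $\|\varphi\|_{L^\infty}\approx|b|$ may be of order $1/2$; the estimate must be phrased in terms of the oscillation $\|\varphi-b\|_{L^\infty}\le\eta$ and the seminorms (which is what the linearized/uniformly elliptic theory actually provides once the gradient bound is known).
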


The proof of Theorem \ref{thm.flat_obst_p} is based on two steps as the proof of Theorem \ref{thm.giuch8} (see Remark \ref{rem.impflat}). First, 
we prove that $\partial E$ is a $C^{1,\alpha}$ graph or, more precisely, \eqref{eq.graph!!!} with $\|\varphi\|_{C^{1,\alpha}(B_{1/2}')} \leq C\eta$. 
This can be done exactly by compactness of vertical rescaling, following the exact same strategy of Savin \cite{Sav10, Sav10b}.

Second, we can apply a theorem of Br\'ezis and Kinderleher \cite{BK} to improve from this $C^{1,\alpha}$ estimate to the optimal $C^{1,1}$ estimate. By completeness we sketch the proof here.

\begin{proof}[Proof of Theorem \ref{thm.flat_obst_p}] We do the argument in two steps.
\\[0.2cm]
{\bf Step 1.} Fix some $\alpha\in (0,1)$, say $\alpha:=1/4$. Then, we claim that if $\eta_\circ$ is small enough then \eqref{eq.graph!!!} holds with $\|\varphi\|_{C^{1,\alpha}(B_{1/2}')} \leq C\eta$, where $C$ depends only on $n$.
Indeed, exactly as in the proof of Proposition~\ref{prop.impclos}, we establish by compactness the following improvement of flatness property, around $x_\circ\in  B_{3/4}\cap \partial E$,
\begin{equation}\label{eq.impr111}
\partial E \subset \big\{ |e\cdot (x-x_\circ) |\le \eta \big\}  \mbox{ in }B_r(x_\circ) \quad \Rightarrow \quad \partial E \subset \big\{ |\tilde e\cdot (x-x_\circ)| \le \rho_\circ^{1+\alpha} \eta \big\}  \mbox{ in }B_{\rho_\circ r}(x_\circ). 
\end{equation}
for some $\rho_\circ\in (0,1)$ depending only on $n$. 
The proof of \eqref{eq.impr111} is analogous to the Proof of Proposition~\ref{prop.impclos}. It is enough to do the case $r=1$. To do it, we consider the vertical rescalings defined similarly as in \eqref{eq.Eeps} in Lemma \ref{lem.Eeps}.
These vertical rescalings of $\partial E$ are compact by Proposition \ref{prop.harnackthino} (similarly as in Lemma \ref{lem.Eeps}) and converge ``uniformly'' to a function $u\in C^a(B'_{1/2})$
which is harmonic. Indeed, the condition $|D^2\Phi|\le \eta^{1+\frac 1 2}$ implies that the thick obstacle will be zero in the limit if we apply the vertical rescaling $(x', x_n) \mapsto (x', x_n/\eta)$ and let $\eta\downarrow 0$.
Using the $C^{1,1}$ regularity of harmonic functions we establish \eqref{eq.impr111}.

With a standard iteration of \eqref{eq.impr111} we establish that \eqref{eq.graph!!!} holds with 
\[\|\varphi\|_{C^{1,\alpha}(B_{1/2}')} \leq C\eta \quad \mbox{ ($\alpha=1/4$)},
\]
as we wanted to show.
\\[0.2cm]
{\bf Step 2.} We improve the previous $C^{1,1/4}$ estimate to the optimal estimate $\|\varphi\|_{C^{1,1}(B_{1/2}')} \leq C\eta$. This is a straightforward application of the results of 
Br\'ezis and Kinderleher \cite{BK} of optimal $C^{1,1}$ regularity for obstacle problems with uniformly elliptic nonlinear operators. Indeed, once we have proved that $\partial E$ is a graph and 
with bounded gradient, then it follows that the mean curvature operator $H$ is uniformly elliptic and thus \cite[Theorem 1]{BK} provides exactly the desired $C^{1,1}$ estimate.
\end{proof}

We can now prove Lemma~\ref{lem.gamma}. 

\begin{proof}[Proof of Lemma~\ref{lem.gamma}]
We divide the proof into two steps. In the first step we show that $\Gamma_\pm$ are a graphs, and in the second step we show their regularity. 
\\[0.2cm]
{\bf Step 1: $\Gamma_\pm$ are graphs in an appropriate direction.} The proof of the fact that $\Gamma_\pm$ are graphs is almost immediate, just noticing that \eqref{eq.gamma0} allows us to apply Theorem~\ref{thm.flat_obst_p} at every scale.

Let us consider first the case $\Phi \equiv {\rm id}$, and let us rotate the setting with respect to the last two coordinates, in such a way that the normal vector to $\Lambda_{{\gamma}, \theta}$ for $\{x_{n-1} > 0\}$, $e_{\gamma+\theta}$, now becomes $\boldsymbol{e}_n$ (that is, rotate an angle $\gamma+\theta$). Let us denote as the corresponding rotated versions with superindex $r$, e.g. $\Lambda_{{\gamma}, \theta}^r$. See Figure~\ref{fig.gamma} for a representation of the rotated setting.

\begin{figure*}[t]
\centering
\includegraphics[width = 15cm]{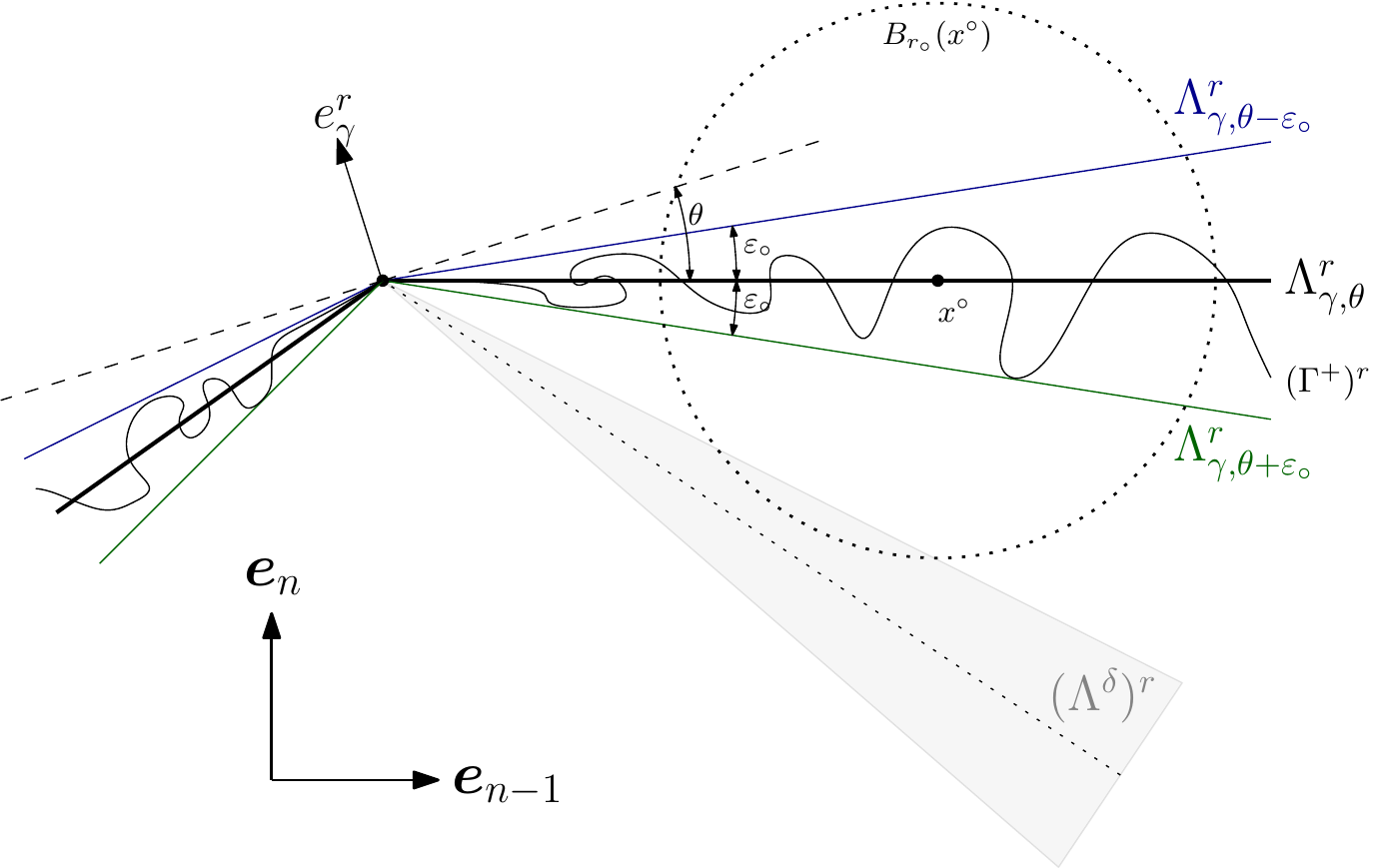}
\caption{Representation of the setting after a rotation.}
\label{fig.gamma}
\end{figure*}

Now take any point $x^\circ \in B_{1/2}\cap \{x_n = 0\}$, so that $x^\circ\in \Lambda_{{\gamma}, \theta}^r$. Denote $r_\circ = x_{n-1}^\circ/2$, and consider a ball $B_{r_\circ}(x^\circ)$. Notice that 
\[
\{x_n \leq -3\tan(\eps_\circ) r_\circ\}\subset E \subset \{x_n \leq 3\tan(\eps_\circ) r_\circ\},\quad\mbox{ in } B_{r_\circ}(x^\circ).
\]

Thus, if $\eps_\circ$ is small enough, we can apply Theorem~\ref{thm.giuch8} rescaled in the ball $B_{r_\circ}(x^\circ)$; which tells us that $(\Gamma^{+})^r$ in $B_{r_\circ}(x^\circ)$ is the graph of a function in the $\boldsymbol{e}_n$ direction. Since we can cover all of $(\Gamma^{+})^r$ with balls of this kind, we conclude that $(\Gamma^{+})^r$ is the graph of a function in the $\boldsymbol{e}_n$ direction in $B_{1/2}\cap \{x_{n-1}\geq 0\}$.

The case $\Phi \not\equiv{\rm id}$ is a perturbation of the previous one, but we would need to use Theorem~\ref{thm.flat_obst_p} instead of Theorem~\ref{thm.giuch8}, since it is no longer true that we are necessarily a minimal surface in $B_{r_\circ}(x^\circ)$.
\\[0.2cm]
{\bf Step 2: $C^{1,1-}$ regularity of $\Gamma_\pm$.} Let us first discuss the case $\Phi\equiv {\rm id}$. In this situation, using \eqref{eq.gamma0}, we obtain that $\Gamma^+$ is a graph that is Lipschitz up to its boundary $\{x_{n-1}=x_n =0\}$ and we may now consider the reflection 
$\Gamma^+_*$ of $\Gamma^+$ under the transformation $(x'', x_{x-1},x_n) \mapsto (x'', -x_{n-1}, -x_n)$. Since $\Gamma^+$ is a Lipschitz graph up to $\{x_{n-1}=x_n =0\}$ the ``odd reflection'' $\overline{\Gamma^+\cup \Gamma^+_*}$
is a Lipschitz graph which solves the equation of minimal graphs in the viscosity sense. It follows that $\overline{\Gamma^+\cup \Gamma^+_*}$ is analytic. 

In the case $\Phi\not\equiv{\rm id}$ we cannot use the reflection trick and the interior smoothness of minimal graph to conclude, but still using \eqref{eq.gamma0} and that $\Phi\in C^{1,1}$ we see that $\Gamma^+$ is a Lipschitz graph with now $C^{1,1}$ boundary datum solving a thick obstacle problem with the mean curvature operator $H$. It follows from standard perturbative methods and the boundary regularity theory for obstacle problems with elliptic operators 
(see, for instance, Jensen \cite{Jen80}) that the $\Gamma^+$ is a $C^{1,\beta}$ graph up to its boundary $\Phi( \{x_{n-1}=x_n =0\})$.
\end{proof}

With this, we can proceed and prove Theorem~\ref{thm.main}.

\begin{proof}[Proof of Theorem~\ref{thm.main}]
If $\theta\in (0,C_\circ\eps)$, then we can directly apply Proposition~\ref{prop.impclos}.

On the other hand, if $\theta\in \big[C_\circ\eps,\frac{\pi}{2}\big)$, thanks to Lemmas~\ref{lem.cneps} and \ref{lem.subslem} we have that 
\[
\Phi(\Lambda_{{\gamma},\theta+C_\circ\eps})\subset E \subset \Phi(\Lambda_{{\gamma}, \theta-C_\circ\eps}),\quad\textrm{ in }B_{1/2}.
\]

That is, by rescaling and taking $\eps$ smaller depending only on $n$ if necessary, we have put ourselves in the situation to apply Lemma~\ref{lem.gamma}. We conclude the proof in this case by noticing Remark~\ref{rem.cinf} and that we can take $\rho_\circ = \frac14$.
\end{proof}

\section{Regularity of solutions}
\label{sec.6}
In this section, in order to simplify the computations, we assume $\Phi \equiv {\rm id}$. All statements and proofs are done under this assumption. We leave to the interested reader the standard extension of this results to the cases
$\Phi \in C^{k,\beta}$, $k\ge 2$ and $\beta\in (0,1)$ or $\Phi$ analytic. 

\begin{prop}
\label{prop.wow}
There exists $\eps_\circ$ depending only on $n$ such that the following statement holds:

Let $E\subset \R^n$ satisfying \eqref{eq.TOP} with $0\in \de E$, be such that $E$ is $\eps$-close to $\Lambda_{{\gamma}, \theta}$ in $B_1$, for some $\eps\in(0,\eps_\circ)$. Then, there exists some $\Lambda_{\overline{\gamma}, \overline{\theta}}$ with $\overline{\gamma}$ and $\overline{\theta}$ as in \eqref{eq.eandtheta}, such that for $\alpha\in \left(0, \frac{1}{2}\right)$,
\[
E \mbox{ is } C_\alpha \eps r^{1+\alpha}\mbox{-close to }\Lambda_{\overline{\gamma}, \overline{\theta}}\mbox{ in } B_r,\quad\mbox{ for all } r\in (0, 1/2),
\]
for some constant $C_\alpha$ depending only on $n$ and $\alpha$. 
\end{prop}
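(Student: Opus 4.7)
The plan is to iterate Theorem~\ref{thm.main} at a geometric sequence of scales centered at the origin, and track both the closeness to a wedge and the angular parameters of the wedges, which should form a Cauchy sequence.

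Fix $\alpha \in (0, 1/2)$ and let $\rho_\circ = \rho_\circ(n,\alpha)$ and $\eps_\circ^{\mathrm{T}} = \eps_\circ(n,\alpha)$ be the constants furnished by Theorem~\ref{thm.main}. I will set up an induction: construct $(\gamma_k, \theta_k)$ (satisfying \eqref{eq.eandtheta}) such that, writing $r_k := \rho_\circ^k$ and $\eps_k := \eps\, \rho_\circ^{k(1+\alpha)}$,
\[
E \text{ is } \eps_k\text{-close to } \Lambda_{\gamma_k,\theta_k} \text{ in } B_{r_k}\quad \text{for every } k\ge 0.
\]
The case $k=0$ is the hypothesis. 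For the inductive step, I consider the rescaled set $\tilde E_k := r_k^{-1} E$. Since $\Phi \equiv {\rm id}$ and both $\Lambda_{\gamma_k,\theta_k}$ and $\Lambda^{\delta}$ are cones, $\tilde E_k$ inherits \eqref{eq.TOP} (with the same $\delta$) in $B_1$, contains $0 \in \partial \tilde E_k \cap \partial \mathcal O$, and is $\eps\,\rho_\circ^{k\alpha}$-close to $\Lambda_{\gamma_k,\theta_k}$ in $B_1$. Choosing $\eps_\circ \le \eps_\circ^{\mathrm{T}}$ ensures this closeness is below the threshold, so Theorem~\ref{thm.main} produces $(\gamma_{k+1}, \theta_{k+1})$ such that $\tilde E_k$ is $\eps\,\rho_\circ^{k\alpha}\rho_\circ^{1+\alpha}$-close to $\Lambda_{\gamma_{k+1},\theta_{k+1}}$ in $B_{\rho_\circ}$. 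Rescaling back yields the inductive statement at step $k+1$.

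Next, I show that $(\gamma_k, \theta_k)$ is Cauchy. Since $E$ is simultaneously $\eps_k$- and $\eps_{k+1}$-close to $\Lambda_{\gamma_k,\theta_k}$ and $\Lambda_{\gamma_{k+1},\theta_{k+1}}$ in $B_{r_{k+1}}$ respectively, the Hausdorff distance between these two wedges in $B_{r_{k+1}}$ is $\le 2\eps_k$. A simple geometric computation, comparing the boundaries of two wedges with nearby parameters, shows that this Hausdorff distance is bounded below by $c\, r_{k+1}(|\gamma_{k+1}-\gamma_k| + |\theta_{k+1}-\theta_k|)$ for some $c = c(n)>0$. Hence
\[
|\gamma_{k+1}-\gamma_k| + |\theta_{k+1}-\theta_k| \;\le\; \frac{C\,\eps_k}{r_{k+1}} \;=\; \frac{C}{\rho_\circ}\,\eps\,\rho_\circ^{k\alpha},
\]
which is summable as a geometric series of ratio $\rho_\circ^\alpha < 1$. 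Therefore $(\gamma_k, \theta_k) \to (\overline\gamma, \overline\theta)$ with tail estimate $|\gamma_k - \overline\gamma| + |\theta_k - \overline\theta| \le C\,\eps\,\rho_\circ^{k\alpha}$, and $(\overline\gamma, \overline\theta)$ still satisfies \eqref{eq.eandtheta}.

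To conclude, the same geometric comparison gives that $\Lambda_{\gamma_k,\theta_k}$ and $\Lambda_{\overline\gamma,\overline\theta}$ are at Hausdorff distance $\le C r_k \cdot \eps\rho_\circ^{k\alpha} = C\eps_k$ in $B_{r_k}$, so by the triangle inequality $E$ is $C\eps_k$-close to $\Lambda_{\overline\gamma,\overline\theta}$ in $B_{r_k}$. For arbitrary $r \in (0, 1/2)$, picking $k\ge 0$ with $r_{k+1} \le r < r_k$ and using $B_r \subset B_{r_k}$ together with $\eps_k \le \rho_\circ^{-(1+\alpha)} \eps\, r^{1+\alpha}$ gives the desired $C_\alpha \eps\, r^{1+\alpha}$-closeness. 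The main point (which is mild since $\Phi \equiv {\rm id}$) is that the rescaled problems at every scale $r_k$ remain in the exact same class of minimizers to which Theorem~\ref{thm.main} applies, so the constants $\rho_\circ$ and $\eps_\circ^{\mathrm{T}}$ can be reused at every iteration; the only nontrivial computation is the summability argument for the angles.
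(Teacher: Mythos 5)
Your proof is correct and takes essentially the same route as the paper, which also iterates the improvement-of-closeness result dyadically, shows the wedge parameters $(\gamma_k,\theta_k)$ form a Cauchy sequence via a telescoping/geometric-series bound (this is precisely \eqref{eq.telim}--\eqref{eq.series} in the paper), and then interpolates to arbitrary radii. The only structural difference is a matter of abstraction: you iterate Theorem~\ref{thm.main} as a black box, whereas the paper re-runs the dichotomy ``$\theta_k$ small vs.\ $\theta_k\geq C_\circ\rho_\circ^{k\alpha}\eps$'' explicitly, iterating Proposition~\ref{prop.impclos} in the flat regime and stopping the iteration as soon as the contact set becomes full, at which point Lemma~\ref{lem.gamma} and Remark~\ref{rem.cinf} deliver closeness to a single wedge at all scales in one shot. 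Both approaches hinge on the same mechanism (since the proof of Theorem~\ref{thm.main} \emph{is} that dichotomy); yours is a bit shorter, while the paper's makes transparent that the iteration terminates once the full-contact regime is reached, which is also used in Remark~\ref{rem.fb}.

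One small point worth flagging, which the paper glosses over just as you do: $\eps_\circ$ is claimed to depend only on $n$, yet the threshold from Theorem~\ref{thm.main} depends on $\alpha$. This is reconcilable (e.g.\ by first running the iteration with a fixed $\alpha_0$ to reach arbitrarily small closeness at small scales, and then restarting with the desired $\alpha$, noting that the limit wedge is the unique blow-up and hence $\alpha$-independent), but neither proof spells it out.
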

\begin{proof}
We will suppose that $\eps > 0$ is sufficiently small so that each of the results used can be applied. 

We begin by noticing that there are two possible scenarios. Either $\theta \geq C_\circ \eps$ or $\theta < C_\circ\eps$, where $C_\circ$ is the constant given in Lemma~\ref{lem.cneps} and in Proposition~\ref{prop.impclos}, depending only on $n$. 

Notice that if $\theta \geq C_\circ \eps$ we are already done. Indeed, in this case we can apply Lemma~\ref{lem.cneps} and Lemma~\ref{lem.subslem} to fulfill the hypotheses of Lemma~\ref{lem.gamma}; which at the same time yields the desired result, thanks to Remark~\ref{rem.cinf}.

Suppose otherwise that $\theta < C_\circ\eps$. In this case we can apply the improvement of closeness in Proposition~\ref{prop.impclos}. That is, there exist some radius $\rho_\circ$, depending only on $n$ and $\alpha$, such that 
 \[
   E \mbox{ is } \rho_\circ^{1+\alpha} \eps\mbox{-close to } \Lambda_{{\gamma}_2,\theta_2}\mbox{ in } B_{\rho_\circ},
 \]
for some ${\gamma}_2$ and $\theta_2$ as in \eqref{eq.eandtheta}. Let us define $E_2 := \rho_\circ^{-1}E $, so that we have a set $E_2\subset\R^n$, satisfying \eqref{eq.TOP}, with $0\in \de E_2$ and $\rho_\circ^{\alpha}\eps$-close to $\Lambda_{{\gamma}_2, \theta_2}$ in $B_1$. We are now again presented with a dichotomy: either $\theta_2 \geq C_\circ \rho_\circ^{\alpha}\eps$ or $\theta_2 \leq C_\circ \rho_\circ^{\alpha}\eps$. In the former case, we can again apply Lemma~\ref{lem.gamma} and Remark~\ref{rem.cinf} to find that 
\[
E_2 \mbox{ is } C\eps \rho_\circ^\alpha r^{1+\alpha}\mbox{-close to }\Lambda_{\bar{\gamma_2}, \bar\theta_2}\mbox{ in } B_r,\quad\mbox{ for all } r\in (0, 1/2),
\]
for some $\Lambda_{\bar \gamma_2, \bar \theta_2}$ (which is close to $\Lambda_{{\gamma}_2, \theta_2}$). Rescaling back, $E$ is $C\eps r^{1+\alpha}$-close to $\Lambda_{\bar{\gamma_2}, \bar \theta_2}$ in $B_{r}$ for all $r \in (0, \rho_\circ/2)$. Using that $E$ is $\eps$-close to $\Lambda_{{\gamma}, \theta}$ in $B_1$ it follows that $E$ is $C_\alpha\eps r^{1+\alpha}$ close to $\Lambda_{ \bar \gamma_2, \bar \theta_2}$ in $B_r$, for all $r\in (0, 1/2)$, and a constant $C_\alpha$ that depends on $\alpha$ and $n$, of the form $C_\alpha = C\rho_\circ^{-1-\alpha}$ for $C$ depending only on $n$. 

If $\theta_2 \leq C_\circ\rho_\circ^\alpha \eps$, we can repeat the process iteratively. Suppose that for all $k < k_\circ\in \N$, we have $\theta_k \leq C_\circ \rho_\circ^{k\alpha} \eps$, but $\theta_{k_\circ} \geq C_\circ \rho_\circ^{k_\circ\alpha} \eps$. That is, there exist $E_k:= \rho_\circ^{-k+1} E$, satisfying \eqref{eq.TOP}, with $0\in \de E_k$ such that it is $\rho_\circ^{\alpha (k-1)}\eps$-close to $\Lambda_{ \gamma_k, \theta_k}$ in $B_1$. By Lemma~\ref{lem.gamma} and Remark~\ref{rem.cinf},
\begin{equation}
\label{eq.k0}
E_{k_\circ} \mbox{ is } C\eps \rho_\circ^{(k_\circ-1)\alpha} r^{1+\alpha}\mbox{-close to }\Lambda_{\bar{\gamma}_{k_\circ}, \bar\theta_{k_\circ}}\mbox{ in } B_r,\quad\mbox{ for all } r\in (0, 1/2), 
\end{equation}
for some $\Lambda_{\bar \gamma_{k_\circ}, \bar\theta_{k_\circ}}$ (close to $\Lambda_{{\gamma}_{k_\circ}, \theta_{k_\circ}}$) and for some constant $C$ depending only on $n$. Alternatively, we can write
\[
E \mbox{ is } C\eps r^{1+\alpha}\mbox{-close to }\Lambda_{\bar {\gamma}_{k_\circ}, \bar \theta_{k_\circ}}\mbox{ in } B_r,\quad\mbox{ for all } r\in (0, \rho_\circ^{k-1}/2). 
\]

Let us redefine, from now on, and for convenience in the upcoming notation, $\Lambda_{{\gamma}_{k_\circ}, \theta_{k_\circ}} := \Lambda_{\bar \gamma_{k_\circ}, \bar \theta_{k_\circ}}$. Notice that $E_k$ is $\rho_\circ^{\alpha(k-1)}\eps$-close to $\Lambda_{{\gamma}_k, \theta_k}$ in $B_1$, but it is also $\rho_\circ^{\alpha(k-2)-1}\eps$-close to $\Lambda_{{\gamma}_{k-1}, \theta_{k-1}}$. Therefore, 
\begin{equation}
\label{eq.telim}
|\theta_{k}-\theta_{k-1}|+|{\gamma}_{k}-{\gamma}_{k-1}| \leq C_\circ\rho_\circ^{\alpha(k-2)}\left(\rho_\circ^{-1}+\rho_\circ^\alpha\right)\eps = C_{n, \alpha}\rho_\circ^{\alpha k} \eps,
\end{equation}
where the sub-indices denote the only dependences of the constants. In particular, by triangular inequality
\begin{equation}
\label{eq.series}
|\theta_{k_\circ} - \theta_k | +|{\gamma}_{k_\circ} - {\gamma}_k | \leq C_{n, \alpha}\eps \sum_{j = k+1}^{k_\circ} \rho_\circ^{\alpha j} \leq C_{n, \alpha}\eps \frac{\rho_\circ^{\alpha(k+1)}}{1-\rho_\circ^\alpha} = C_{n, \alpha}\eps \rho_\circ^{\alpha k},
\end{equation}
for a different constant $C_{n, \alpha}$, still depending only on $n$ and $\alpha$. Thus, since $E_k$ is $\rho_\circ^{\alpha(k-1)}\eps$-close to $\Lambda_{{\gamma}_k, \theta_k}$ in $B_1$, $E$ is $\rho_\circ^{(1+\alpha)(k-1)}\eps$-close to $\Lambda_{{\gamma}_k, \theta_k}$ in $B_{\rho_\circ^{k-1}}$. 

Now, from \eqref{eq.series}, $\Lambda_{{\gamma}_k, \theta_k}$ is $C_{n, \alpha}\eps\rho_\circ^{\alpha k}\rho_\circ^{k-1}$-close to $\Lambda_{{\gamma}_{k_\circ}, \theta_{k_\circ}}$ in $B_{\rho_\circ^{k-1}}$. Putting all together, $E$ is $C_{n, \alpha}\rho_\circ^{(1+\alpha) (k-1)}\eps$-close to $\Lambda_{{\gamma}_{k_\circ}, \theta_{k_\circ}}$ in $B_{\rho_\circ^{k-1}}$ for all $k < k_\circ$. This, combined with \eqref{eq.k0}, yields the desired result. 

Finally, if $\theta_k \leq C_\circ\rho^{k\alpha}\eps$ for all $k \in \N$, we can take $k_\circ = \infty$ and repeat the previous procedure. In this case, consider as ${e}_\infty$ and $\theta_\infty$ the limits of the sequences $({e}_k)_{k\in \N}$ and $(\theta_k)_{k\in \N}$, which exist by \eqref{eq.telim}. Notice that $\theta_\infty = 0$.
\end{proof}

\begin{rem}
\label{rem.fb}
In the previous proof, notice that if $k_\circ<\infty$ we must be dealing with a point in the interior of the contact set. In particular, all points on the free boundary must have $k_\circ = \infty$, and since $\theta_\infty = 0$ there is a supporting plane at each of this points.
\end{rem}

We now give a proposition on regularity of $\partial E$ in the case that it is close enough to some $\Lambda_{\gamma, \theta}$ with $\theta$ small enough (the wedge is almost a half-space). 

\begin{prop}
\label{thm.thmu}
There exists $\eps_\circ$ depending only on $n$ such that the following statement holds: 

Let $E\subset \R^n$ satisfying \eqref{eq.TOP}, be such that $E$ is $\eps$-close to $\Lambda_{\gamma, \theta}$ in $B_1$, for $\eps\in (0, \eps_\circ)$, and $\theta \leq C_\circ \eps$ for a constant $C_\circ$ depending only on $n$. Then, after a rotation of angle $\gamma$, $\de E$ is the graph of a function $h:B_{1/2}'\to (-1, 1)$ in the $\boldsymbol{e}_n$ direction in $B_{1/2}$. Moreover,
\begin{equation}
\label{eq.ureg}
\|h \|_{C^{1, \alpha}(\overline{B_{1/2}'\cap\{x_{n-1}\geq 0\}})}+\|h \|_{C^{1, \alpha}(\overline{B_{1/2}'\cap\{x_{n-1}\leq 0\}})}\leq C \eps,
\end{equation}
for any $\alpha \in \left(0, \frac{1}{2}\right)$, and some constant $C$ depending only on $n$ and $\alpha$.
\end{prop}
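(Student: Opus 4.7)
The plan is to combine Proposition~\ref{prop.wow} applied at every contact point with the standard interior regularity theory for minimal surfaces (to handle points away from the contact set) and with the boundary regularity of $\Gamma_\pm$ from Lemma~\ref{lem.gamma}.

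First, since $\theta\le C_\circ\eps$ and $E$ is $\eps$-close to $\Lambda_{\gamma,\theta}$ in $B_1$, after rotating by the angle $\gamma$ the set $E$ is $C\eps$-close to the half-space $\{x_n\le0\}$ in $B_1$. By Theorem~\ref{thm.flat_obst_p} applied in balls of a fixed small radius centered at points of $\partial E$, the boundary $\partial E$ coincides in $B_{1/2}$ with the graph $\{x_n = h(x')\}$ of a function $h:B_{1/2}'\to(-1,1)$ satisfying $\|h\|_{L^\infty}+\|\nabla h\|_{L^\infty}\le C\eps$. Moreover, outside the horizontal projection of the contact set $\mathcal{C}:=\partial E\cap\partial\mathcal O$, the function $h$ solves the minimal surface equation, and thus by the classical De Giorgi $C^{1,\alpha}$ theory (Theorem~\ref{thm.giuch8} and iteration of \eqref{eq.improofflatness}), $h$ is smooth there with interior estimates $\|h\|_{C^{1,\alpha}(K)}\le C_K\eps$ on compact sets $K$ disjoint from the contact set.

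Second, for every contact point $x_\circ\in\mathcal{C}\cap B_{3/4}$, I apply Proposition~\ref{prop.wow} centered at $x_\circ$: for the chosen $\alpha\in(0,1/2)$, there exists a wedge $\Lambda_{\bar\gamma(x_\circ),\bar\theta(x_\circ)}$ such that $E-x_\circ$ is $C_\alpha\eps r^{1+\alpha}$-close to it in $B_r$ for every $r\in(0,1/4)$. Because $\bar\theta(x_\circ)\le C\eps$ is small, this wedge is the intersection of two half-spaces with normals close to $\boldsymbol e_n$, so the wedge-closeness statement translates into the existence of two affine functions $L_{x_\circ}^\pm(x')=h(x_\circ')+p^\pm(x_\circ)\cdot(x'-x_\circ')$, with $|p^\pm(x_\circ)|\le C\eps$, such that
\[
|h(x')-L_{x_\circ}^{\pm}(x')|\le C_\alpha\eps\,|x'-x_\circ'|^{1+\alpha}\quad\text{on }B_r(x_\circ')\cap\{\pm x_{n-1}\ge0\},
\]
for all $r\in(0,1/4)$. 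This is a pointwise $C^{1,\alpha}$ bound at every contact point, \emph{separately from each side of $\{x_{n-1}=0\}$}, with constants uniform in $x_\circ$.

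Third, to upgrade these two pieces of information to the uniform estimate \eqref{eq.ureg}, I use a standard Campanato-type argument. For two points $x',y'\in B_{1/2}'\cap\{x_{n-1}\ge0\}$, let $d:=\mathrm{dist}(\{x',y'\},\Pi(\mathcal{C}))$ where $\Pi$ is the horizontal projection; if $|x'-y'|\ge d/2$ the required Hölder bound on $\nabla h$ follows by comparing the slopes $p^+$ at contact points close to $x'$ and $y'$ via triangular inequalities using the pointwise $C^{1,\alpha}$ expansion above (this also yields the $C^\alpha$ continuity of the map $x_\circ\mapsto p^+(x_\circ)$ along the contact set); if $|x'-y'|\le d/2$, the interior $C^{1,\alpha}$ estimate for the minimal-graph equation in the ball $B_d(x')$, rescaled and combined with the pointwise bound at the nearest contact point to give a $C\eps$-sized $L^\infty$ bound on $\nabla h-L^+$, furnishes the desired oscillation estimate. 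The same argument on $\{x_{n-1}\le0\}$ completes the proof.

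The main obstacle is the third step: faithfully translating the wedge-closeness provided by Proposition~\ref{prop.wow} into a one-sided $C^{1,\alpha}$ expansion for $h$, and then gluing the interior regularity with the boundary-type pointwise estimates so that no loss in the exponent $\alpha<1/2$ occurs. The sharpness restriction $\alpha<1/2$ enters precisely through Proposition~\ref{prop.wow} (ultimately through the optimal regularity of the Signorini problem) and is carried along throughout the argument.
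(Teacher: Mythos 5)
Your overall architecture coincides with the paper's: the pointwise one--sided $C^{1,\alpha}$ expansion at contact points extracted from Proposition~\ref{prop.wow}, interior minimal--surface regularity away from the contact set, and a dichotomy on $|x'-y'|$ versus the distance $d$ to the contact set to glue the two (your third step is exactly the paper's Case 1 / Case 2 split, including the way the restriction $\alpha<1/2$ enters only through Proposition~\ref{prop.wow}). That part of the proposal is sound.

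There is, however, a genuine gap in your first step, where you establish that $\partial E$ is a graph. You invoke Theorem~\ref{thm.flat_obst_p} ``in balls of a fixed small radius centered at points of $\partial E$'', but that theorem requires the obstacle to be a thick set of the form $\Phi(\{x_n\le 0\})$ contained in $E$; here the obstacle is the thin wedge $\Phi(\Lambda^\delta)$, and no half-space-type set can be squeezed between $\Phi(\Lambda^\delta)$ and $E$ (a wedge of aperture $<\pi$ contains no half-space), so $E$ does not belong to the minimizing class of Theorem~\ref{thm.flat_obst_p}. Likewise Theorem~\ref{thm.giuch8} is unavailable in any ball of fixed radius that meets the contact set, since $E$ is only a supersolution there. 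Consequently neither cited result gives the graph property in a neighborhood of $\Delta_E=\partial E\cap\{x_{n-1}=x_n=0\}$. The fix is the one the paper uses: for $x_\circ\in\partial E$ at distance $r$ from $\Delta_E$, apply Proposition~\ref{prop.wow} at the nearest contact point to get that $E$ is $C\eps r$-close to a wedge (hence, since $\theta\le C_\circ\eps$, to a hyperplane) in $B_{r/2}(x_\circ)$ --- i.e. flatness of order $C\eps$ \emph{relative to the ball $B_{r/2}(x_\circ)$}, where $E$ is moreover a genuine perimeter minimizer --- and then apply Theorem~\ref{thm.giuch8} rescaled; the graph is finally extended across the (measure-zero) set $\Delta_E$ by continuity and a covering argument. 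Without this zoom to scale $r\sim{\rm dist}(x_\circ,\Delta_E)$, the flatness hypothesis of any $\eps$-regularity theorem degenerates as $x_\circ$ approaches the contact set. A second, minor, omission: in your Case $|x'-y'|\le d/2$ you should specify the direction of flatness of $\partial E$ in $B_d(y)$, which depends on whether the nearest contact point is a free boundary point ($\theta=0$ there, by Remark~\ref{rem.fb}) or an interior contact point (flat in the direction $e_{\gamma+\theta}$); this is needed to apply the interior improvement of flatness correctly.
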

\begin{proof}
Let assume for simplicity that $\gamma = 0$, the other cases are analogous. We will assume that $\eps_\circ$ is small enough so that the previous results can be applied. Let us also assume that the contact set, $\Delta_E := \de E \cap \{x_{n-1} = x_n = 0\}$, is non-empty in $B_{1/2}$; $\Delta_E \cap B_{1/2} \neq \varnothing$. Otherwise we are already done by the classical improvement of flatness. 
\\[0.2cm]
{\bf Step 1: $\de E$ is the graph of a function.} Let us first show that indeed $\de E$ is the graph of a function. To do so, proceed as in the first part of Lemma~\ref{lem.gamma}, combined with Proposition~\ref{prop.wow} and the fact that $\theta \leq C_\circ\eps$:

Take any $x_\circ\in B_{1/2}\cap \de E$ not belonging to the contact set $\Delta_E$, and let $r := \mbox{dist}(x_\circ, \Delta_E) = |x_\circ - z|$ for $z\in \Delta_E$. Applying Proposition~\ref{prop.wow} around $z$, we deduce that for some $\Lambda_{\bar \gamma, \bar \theta}$ (depending on $z$),
\[
E \mbox{ is } C\eps r\mbox{-close to } \Lambda_{\bar \gamma, \bar\theta},\quad\mbox{ in } B_{r/2}(x_\circ),
\]
for some constant $C$ depending only on $n$. If we rescale the space a factor $2r^{-1}$ with respect to $z$ so that $E$ becomes $\tilde E$ then
\[
\tilde E \mbox{ is } C\eps \mbox{-close to } \Lambda_{\bar \gamma, \bar\theta},\quad\mbox{ in } B_{1}(2r^{-1}\,x_\circ).
\]

Notice that $\tilde E$ is a minimal surface in $B_{1}(2r^{-1}\,x_\circ)$, since $E$ is a minimal surface in $B_{r/2}(x_\circ)$. Using that $|\bar \gamma - 0| + |\theta - \bar \theta|\leq C\eps$ for some $C$ depending only on $n$, and that $\theta \leq C_\circ \eps$, we get that $\Lambda_{\bar \gamma, \bar\theta}$ is $C\eps r$-close to $\{x_n = 0\}$ in $B_{r/2}(x_\circ)$. After the rescaling, $\Lambda_{\bar \gamma, \bar \theta}$ is $C\eps$-close to $\{x_n = 0\}$ in $B_1(2r^{-1}\, x_\circ)$, so that $\tilde E$ is $C\eps$-close to $\{x_n = 0\}$ in $B_1(2r^{-1}\, x_\circ)$. Thanks to the classical improvement of flatness (Theorem~\ref{thm.giuch8}) for $\eps$ small enough depending only on $n$, $\de\tilde E$ is a graph in the $\boldsymbol{e}_n$ direction in $B_1(2r^{-1}\, x_\circ)$, and consequently the same occurs for $\de E$ in $B_{r/2}(x_\circ)$. Let us call $h$ the function whose graph is defined on $B_{r/2}(x_\circ)$ in the $\boldsymbol{e}_n$ direction. In particular, applying Theorem~\ref{thm.giuch8} again, $h\in {\rm Lip}(B_{r/4}'(x_\circ'))$, with $[h]_{C^{0,1}(B_{r/2}')}\leq C\eps$; where $x_\circ'$ is the projection of $x_\circ$ to $\{x_n = 0\}$. 

Now, by a standard covering argument together with the fact that $\de E$ is continuous and $\Delta_E$ has measure zero, $u$ is defined in $B_{1/2}'$ with 
\[
[h]_{C^{0,1}(B_{1/2}')}\leq C\eps,
\]
for some $C$ depending only on $n$.
\\[0.2cm]
{\bf Step 2: Regularity bound.} Let us now show \eqref{eq.ureg}. We will show that for any $y' \in B_{1/4}'\cap \{x_{n-1}\geq 0\}$ and any $\rho \in (0, 1/4)$, there exists some $p_{y'}\in \R^{n-1}$ depending only on $y'$ such that for any $\alpha \in (0, 1/2)$,
\begin{equation}
\label{eq.up}
|h(x') - h(y') - p_{y'}\cdot (x'-y')| \leq C \eps \rho^{1+\alpha}\quad\mbox{ in } B_\rho'(y')\cap \{x_{n-1}'\geq 0\},
\end{equation}
for some constant $C$ depending only on $n$ and $\alpha$. The other half, $\{x_{n-1}'\leq 0\}$, follows by symmetry. 

Throughout this second step we will be switching between the characterisation of the solution to our thin obstacle problem as a boundary, $\de E$, and as the graph of a function $u$ on $\R^{n-1}$. Thus, we can rewrite Proposition~\ref{prop.wow}. That is, if $0\in \de E$, we know that 
\begin{equation}
\label{eq.Eu}
E \mbox{ is } C_\alpha \eps r^{1+\alpha}\mbox{-close to }\Lambda_{\overline{\gamma}, \overline{\theta}}\mbox{ in } B_r,\quad\mbox{ for all } r\in (0, 1/2),
\end{equation}
for some constant $C_\alpha$ depending only on $n$ and $\alpha$, and for some $\Lambda_{\bar \gamma, \bar \theta}$. We want to rewrite it in terms of $u$. Note that $|\gamma| +\bar \theta\leq C\eps$ for some constant $C$ depending only on $n$, since $\theta \leq C_\circ\eps$, and therefore, we have that \eqref{eq.Eu} implies
\begin{equation}
\label{eq.Eu2}
|h(x')-A^+(x_{n-1}')_+-A^-(x_{n-1}')_-|\leq C_\alpha \eps |x'|^{1+\alpha},\quad\mbox{ in } B_{1/2}',
\end{equation}
with $A^- \geq A^+$ and $|A^-|+|A^+|\leq C\eps$ for some $C_\alpha$ depending only on $n$ and $\alpha$. Notice that if $0$ is in the free boundary of the contact set, $0\in \de \Delta_E'$, then $A^+ = A^-$, or equivalently $\bar \theta = 0$ (see Remark~\ref{rem.fb}).

Let $y', z' \in B_{1/4}'\cap \{x_{n-1}'\geq 0\}$, and let $y'_\circ, z'_\circ\in \Delta_E'$ be such that ${\rm dist}(y', \Delta_E') = |y'-y_\circ'|$ and ${\rm dist}(z', \Delta_E') = |z'-z_\circ'|$. We denote by $y, z, y_\circ$, and $z_\circ$, the corresponding elements as seen in $\R^n$ (e.g. $y = (y', 0)$), and let $\bar y = (y', h(y'))\in \de E$ and $\bar z = (z', h(z'))\in \de E$. Suppose, without loss of generality, that $d = |y'-y_\circ'|\leq |z'-z_\circ'|$, and we consider two different cases. 
\begin{itemize}
\item {\it Case 1.} Suppose that $r = |z'-y'|\geq d/2$. Using \eqref{eq.Eu2} centered around $y_\circ'$instead of 0, we know that for some $A^+$ depending on $y_\circ'$, 
\[
|h(x')-A^+ x_{n-1}'|\leq C_\alpha \eps |x'-y_\circ'|^{1+\alpha},\quad \mbox{ for } x'\in B_{1/2}'(y_\circ')\cap \{x_{n-1}'\geq 0\}.
\]
Putting $y'$ and $z'$ in the previous expression yields
\begin{align*}
|h(y')-A^+ y_{n-1}'|& \leq C_\alpha \eps|y'-y_\circ'|^{1+\alpha} = d^{1+\alpha} \leq C_\alpha \eps r^{1+\alpha},\\
|h(z')-A^+ z_{n-1}'|& \leq C_\alpha \eps|z'-y_\circ'|^{1+\alpha} \leq C_\alpha \eps\left(d+ r\right)^{1+\alpha} \leq C_\alpha \eps r^{1+\alpha} ,
\end{align*}
from which 
\[
|h(y')- h(z') - A^+(y_{n-1}'-z_{n-1}')| \leq C_\alpha \eps r^{1+\alpha},
\]
and in particular, \eqref{eq.up} holds with $p_{y'} = A^+$. 
\item {\it Case 2.} Suppose $r = |z'-y'| \leq d/2$. If $B_{d}'(y')\nsubseteq \{x_{n-1}'\geq 0\}$, then $y_\circ'\in \Delta_E'$ belongs to the free boundary and the corresponding $\Lambda_{{\gamma}(y_\circ'), \theta(y_\circ')}$ from Proposition~\ref{prop.wow} around $y_\circ$ is actually an hyperplane ($\theta(y_\circ')= 0$) with normal vector $e_{\gamma(y_\circ')}$ (see Remark~\ref{rem.fb}). In particular, $\de E$ is $C\eps d^{1+\alpha}$-flat in the $e_{\gamma(y_\circ')}$ direction in the ball $B_{d}(y)$ thanks to Proposition~\ref{prop.wow}. On the other hand, if $B_{d}'(y')\subset \{x_{n-1}'\geq 0\}$, we consider again the corresponding $\Lambda_{{\gamma}(y_\circ'), \theta(y_\circ')}$ from Proposition~\ref{prop.wow} around $y_\circ$. Then $\de E$ is $C\eps d^{1+\alpha}$-flat in the $e_{\gamma(y_\circ')+\theta(y_\circ')}$ direction in the ball $B_{d}(y)$ (recall that $e_{\gamma(y_\circ')+\theta(y_\circ')}$ is the normal vector to $\Lambda_{{\gamma}(y_\circ'), \theta(y_\circ')}$ in $\{x_{n-1}\geq 0\}$). In any case, noting that $E$ is a set of minimal perimeter in $B_d(y)$ we can apply the classical improvement of flatness (see Remark~\ref{rem.impflat}) in $B_d(y)$, to get
\[
|\nu(y)-\nu(z)|\leq C\eps|y-z|^\alpha,
\]
for some $C$ depending only on $n$. We have denoted here by $\nu(x)$ for $x\in \de E$ the unit normal vector to $\de E$ pointed outwards with respect to $E$ at the point $x$.

Now notice that if $\eps$ is small enough depending only on $n$, since $|\nabla h|\leq C\eps$, $|\nu(y)-\nu(z)| \geq |\nabla h(y')-\nabla h(z')|$, and on the other hand, $|y-z|\leq |y'-z'|+|h(y')-h(z')|\leq 2|y'-z'|$ so that
\[
|\nabla h(y')-\nabla h(z')|\leq C\eps|y'-z'|^\alpha,
\]
from which \eqref{eq.up} follows.
\end{itemize}
From \eqref{eq.up} the result \eqref{eq.ureg} follows by a covering argument. 
\end{proof}

With this, we can now prove Theorem~\ref{thm.main2}. 

\begin{proof}[Proof of Theorem~\ref{thm.main2}]
In the case $\Phi \equiv {\rm id}$ it is a direct consequence of Lemma~\ref{lem.gamma} and Proposition~\ref{thm.thmu}, depending on whether the wedge $\Lambda_{\gamma, \theta}$ is $\eps$-flat or not. The case $\Phi \not\equiv {\rm id}$ follows from standard perturbative arguments and is left to the interested reader.
\end{proof}

\section{Monotonicity formula and blow-ups}
\label{sec.7}
In this section we prove Proposition~\ref{prop.classif_cones} and Corollary~\ref{cor.cor_2}. 

\begin{lem}[Monotonicity formula for minimizers of \eqref{eq.TOP}] 
\label{lem.mono_2}
Let $E\subset \R^n$ satisfy \eqref{eq.TOP} in $B_2$ (instead of $B_1$) and suppose $0 \in \partial E\cap\partial \mathcal{O}$. Let us define 
\begin{equation}
\label{eq.monformphi_2}
\mathcal A(r) := \frac{P\big(E; \Phi(B_r)\big)}{r^{n-1}},\quad\textrm{for}\quad r>0.
\end{equation}
Then, 

(a) If $\Phi\equiv{\rm id}$ then $ \mathcal A'(1) \ge 0$

(b) If $\Phi(0) = 0$, $D\Phi(0) = {\rm id}$, and $[\Phi]_{C^{1,1}} \le \eta_\circ$ for $\eta_\circ\in (0,1)$ small enough depending only on $n$ then
\[ \mathcal A' (1) \ge - C\eta_\circ\]
for some $C$ depending only on $n$.
\end{lem}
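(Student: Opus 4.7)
The plan is to adapt the classical cone-comparison proof of monotonicity for minimal surfaces. The key observation is that in both parts the obstacle has vertex at the origin, and after the change of variables $y = \Phi^{-1}(x)$ in part (b), the obstacle becomes a genuine cone, making the cone-over-slice construction automatically admissible as a competitor in \eqref{eq.TOP}.

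For part (a), I set $A(r) := P(E; B_r)$. The area formula applied to the distance function $|x|$, combined with $|\nabla^{\partial E}|x|\,| \le 1$, yields the coarea-type lower bound $A'(r) \ge \mathcal{H}^{n-2}(\partial^* E \cap \partial B_r)$ for a.e.\ $r$. For the reverse inequality I take as competitor the set $F$ equal to the solid cone with vertex $0$ over $E \cap \partial B_r$ inside $B_r$, glued to $E$ outside $B_r$. Since $\Lambda^\delta$ is a cone with vertex $0$ contained in $E$, one has $\Lambda^\delta \cap B_r \subset F$, so $F$ is admissible in \eqref{eq.TOP}. Combining minimality of $E$ with the cone-area identity $\mathcal{H}^{n-1}(\text{cone over } S) = \tfrac{r}{n-1} \mathcal{H}^{n-2}(S)$ for $S \subset \partial B_r$ gives $A(r) \le \tfrac{r}{n-1} A'(r)$, which is exactly $\tfrac{d}{dr}(A(r)/r^{n-1}) \ge 0$, and hence $\mathcal{A}'(1) \ge 0$.

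For part (b), I set $\tilde E := \Phi^{-1}(E)$, so that the thin obstacle becomes exactly the cone $\Lambda^\delta$. The area formula yields
\[
P(E; \Phi(B_r)) = \int_{\partial^* \tilde E \cap B_r} J(y, \nu(y))\, d\mathcal{H}^{n-1}(y),
\]
where $J(y,\nu)$ denotes the $(n-1)$-dimensional Jacobian of $\Phi$ at $y$ restricted to the hyperplane $\nu^\perp$. From $\Phi(0) = 0$, $D\Phi(0) = {\rm id}$ and $[\Phi]_{C^{1,1}} \le \eta_\circ$ one obtains $|J(y,\nu) - 1| \le C(n)\eta_\circ |y|$ throughout $B_2$. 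Rerunning the argument of part (a) on $\tilde E$ and carrying this Jacobian factor through both the coarea step and the cone comparison (the competitor being $F := \Phi(\tilde F)$ with $\tilde F$ the cone over $\tilde E \cap \partial B_r$, still automatically admissible since $\Lambda^\delta \cap B_r \subset \tilde F$) leads to
\[
A(r) \le \frac{r(1+C\eta_\circ r)}{(n-1)(1-C\eta_\circ r)} A'(r),
\]
which rearranges to $\tfrac{d}{dr} \log(A(r)/r^{n-1}) \ge -C(n) \eta_\circ$. Evaluating at $r = 1$, together with a universal upper density bound $\mathcal{A}(1) \le C(n)$ obtained by comparing $E$ with the admissible $E \cup \Phi(B_1) \subset B_2$, this produces $\mathcal{A}'(1) \ge -C(n) \eta_\circ$.

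The hard part is ensuring admissibility of the cone-over-slice competitor in the presence of the obstacle; the entire purpose of the change of variables $y = \Phi^{-1}(x)$ in part (b) is to convert the generic $C^{1,1}$ obstacle $\Phi(\Lambda^\delta)$ into a genuine cone, for which admissibility is automatic. The remaining work is the standard but careful propagation of the $O(\eta_\circ)$ Jacobian defect through the coarea inequality and the cone-area identity, which is exactly what produces the $-C(n)\eta_\circ$ loss in part (b).
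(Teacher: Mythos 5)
Your proof is correct and follows essentially the same route as the paper: a cone-comparison competitor (admissible because the obstacle is itself a cone with vertex at the origin) for part (a), and for part (b) the pullback by $\Phi^{-1}$ to make the obstacle a genuine cone, with the $O(\eta_\circ)$ Jacobian/quasi-dilation errors propagated through the comparison. The only differences are cosmetic --- you use the full cone over $E\cap\partial B_r$ and a logarithmic differential inequality (hence the explicit need for the standard upper density bound $\mathcal A(1)\le C(n)$, which the paper uses implicitly), whereas the paper contracts $E$ and extends conically only in an annulus and computes $\mathcal A'(1)$ directly.
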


\begin{proof}
(a) The proof is similar to that of the classical monotonicity formula for minimal surfaces. Indeed, we  take as a competitor to $E$ in $B_{1}$ the dilation of $E$ to $B_{1-\eps}$ and we extend it conically in the annulus. For simplicity in the following computations, from now on we rescale everything by a factor 2, so that we can deal with $r = 1$ and $ \mathcal{A}'(1)$.

As in \cite{Sav10b}, we take $F$ defined as
\begin{equation}
\label{eq.original}
x\in F \Leftrightarrow  \left\{ \begin{array}{ll}
  x \in E & \textrm{if }|x| > 1\\
  x/|x| \in E& \textrm{if } (1-\eps)\le |x|\le 1\\
  (1-\eps)^{-1} x \in E & \textrm{if }|x|< (1-\eps),\\
  \end{array}\right.
\end{equation}
that is, we first contract it by a factor $1-\eps$ and then extend conically $F$ in the annulus $B_1\setminus B_{1-\eps}$ to obtain a competitor for $E$ in $B_{1}$.

Thus, 
\begin{equation}
\label{eq.comp}
P_{B_{1}}(E) \le P_{B_{1}}(F) = (1-\eps)^{n-1} P_{B_{1}}(E) + P_{B_{1}\setminus B_{1-\eps}} (F).
\end{equation}
Now, dividing by $\eps$ and letting $\eps\downarrow 0$, we obtain
\begin{equation}
\label{eq.comp2}
(n-1)P_{B_{1}}(E) \le \mathcal{H}^{n-2}(\partial E\cap \partial B_{1}).
\end{equation}

On the other hand, notice that 
\begin{equation}
\label{eq.conecond}
\mathcal{A}'(1) = \int \frac{1}{\sqrt{1-(x\cdot \nu(x))^2}} d\mathcal{H}^{n-2}_{\de E\cap \de B_{1}} - (n-1) P_{B_{1}} (E),
\end{equation}
which combined with \eqref{eq.comp2} yields the result in the case (a). 

(b) The proof in this case is a perturbation of the proof in case (a). Now we have 
\[
\Phi(0) =0, \quad D\Phi(0) \equiv {\rm id} \quad \mbox{and} \quad |D^2\Phi| \le \eta_\circ \quad \mbox{in }B_1,
\]

The observation that allows us to control the errors is that, for all  $x_\circ \in B_1$.
\begin{equation}\label{eq.11111}
\Phi(x) =  \Phi(x_\circ) + D\Phi(x_\circ) (x-x_\circ) + O( \eta_\circ |x-x_\circ|^2),
\end{equation}
\begin{equation}\label{eq.22222}
D\Phi(x_\circ) = {\rm id} + O(\eta_\circ) , \quad D\Phi(rx_\circ) = D\Phi(x_\circ)+ O(\eta_\circ(1-r)), \quad \forall r\in (0,1).
\end{equation}

As a consequence, for $r \in (0,1]$ the maps $\theta : (0,1]\times\Phi(B_1) \rightarrow \Phi(B_r)$ defined by
\[
(r,x) \mapsto \Phi\big( r\Phi^{-1}(x)\big)  
\]
are bi-Lipschitz and are quasi-dilations with the estimate, for $r\in (1/2,1)$
\begin{equation}\label{eq.errorest}
|\theta(r,x) - \theta(r,x_\circ)| \le  r|x-x_\circ| \big(1 + C(1-r)\eta_\circ\big).
\end{equation}
Indeed, \eqref{eq.errorest} follows immediately from \eqref{eq.11111} and \eqref{eq.22222} if $|x_\circ-x|<(1-r)$. For general $x_\circ, x$ we use the previous case and the triangle inequality.

Now, repeat the proof for the case (a) after applying $\Phi^{-1}$ and then check using \eqref{eq.errorest} that the errors we make are small. Namely, we define $F$ as in \eqref{eq.original} but with $E$ replaced by $\Phi^{-1}(E)$. Note that $\Phi(F)$ is a ``competitor'' of $E$ in $\Phi(B_1)$, namely, $\Phi(\Lambda^\delta) \subset \Phi(F)$ and $\Phi(F)\setminus \Phi(B_1)= 
E\setminus \Phi(B_1)$.
 
Now \eqref{eq.comp} must be replaced by
\begin{equation}
\label{eq.comp0}
P_{\Phi(B_{1})}(E) \le P_{\Phi(B_{1})}(\Phi (F)) = P_{\Phi(B_{1-\eps})}( \Phi(F)) + P_{\Phi( B_{1}\setminus B_{1-\eps}) } (\Phi(F)).
\end{equation}
Now, using \eqref{eq.errorest} and $\Phi(F) = \theta(1-\eps, E)$ in $\Phi(B_{1-\eps})$, we obtain 
\[
 P_{\Phi(B_{1-\eps})}(\Phi(F))  \le   (1-\eps)^{n-1} P_{\Phi(B_{1})}(E) + O(\eta_\circ \eps).
\]
and 
\[
 P_{\Phi(B_1\setminus B_{1-\eps})}(\Phi(F)) = \eps \mathcal{H}^{n-2} \big( \Phi(F\cap \partial B_1)\big) + O(\eta_\circ \eps).
\]

So that, 
\[
P_{\Phi(B_{1})}(E) \le (1-\eps)^{n-1}P_{\Phi(B_{1})}(E) + \eps \mathcal{H}^{n-2} \big( \Phi(F\cap \partial B_1)\big) + O(\eta_\circ \eps).
\]

Dividing by $\eps$ and letting $\eps\downarrow 0$ we obtain 
\[
(n-1)P_{\Phi(B_{1})}(E) \le \mathcal{H}^{n-2}(\partial E\cap \Phi(\partial B_{1}))+O(\eta_\circ).
\]

Now we conclude the proof observing that 
\[
\mathcal{A}'(1) = \int \frac{\big|\partial_r \theta\big(1,  \Phi^{-1}(x)\big)\big|}{\sqrt{1-(x\cdot \nu(x))^2}} d\mathcal{H}^{n-2}_{\de E\cap \Phi(\de B_{1})} - (n-1) P_{\Phi(B_{1})} (E),
\]
and that $\big|\partial_r \theta\big(1,  \Phi^{-1}(x)\big)\big| = 1 + O(\eta_\circ)$.
\end{proof}

\begin{lem}[Monotonicity formula for minimizers of \eqref{eq.TOP}] 
\label{lem.mono}
Let $E\subset \R^n$ satisfy \eqref{eq.TOP} and suppose $0 \in \partial E\cap\partial \mathcal{O}$. Let us define 
\begin{equation}
\label{eq.monformphi}
\mathcal A_E(r) := \frac{P\big(E; \Phi(B_r)\big)}{r^{n-1}},\quad\textrm{for}\quad r>0.
\end{equation}
Then, 

(a) If $\Phi\equiv{\rm id}$ then $\mathcal A '\ge 0$ for $r\in(0,1)$. Moreover, $\mathcal{A}' \equiv 0$ (i.e., $\mathcal{A}$ constant) if and only if $E$ is a cone ($tE = E$ for any $t > 0$).

(b) If $\Phi(0) = 0$, $D\Phi(0) = {\rm id}$, and $[\Phi]_{C^{1,1}} \le \eta_\circ$ for $\eta_\circ\in (0,1)$ small enough depending only on $n$ then
\[
\mathcal{A}_E'(r) \ge - C\eta_\circ
\] 
for some $C$ depending only on $n$.

\end{lem}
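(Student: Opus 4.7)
The plan is to derive Lemma \ref{lem.mono} from Lemma \ref{lem.mono_2} by a rescaling argument, and to obtain the rigidity statement in case (a) by analyzing the equality case of the competitor construction used in the proof of Lemma \ref{lem.mono_2}(a).

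For each $r_0 \in (0,1)$, I would consider $\tilde E := r_0^{-1} E$ and $\tilde \Phi(x) := r_0^{-1}\Phi(r_0 x)$. Since the wedge $\Lambda^\delta$ is a cone, $\tilde\Phi(\Lambda^\delta) = r_0^{-1}\Phi(\Lambda^\delta)$, and so $\tilde E$ satisfies \eqref{eq.TOP} with respect to $\tilde \Phi$ on $B_{1/r_0} \supsetneq B_1$. The scaling of perimeter gives $\mathcal A_{\tilde E}(\sigma) = \mathcal A_E(r_0\sigma)$, whence $\mathcal A_{\tilde E}'(1) = r_0 \mathcal A_E'(r_0)$. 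In case (a), $\tilde \Phi \equiv {\rm id}$, and applying the competitor argument of Lemma \ref{lem.mono_2}(a) to $\tilde E$ --- valid because it only modifies $\tilde E$ inside $B_1 \subset B_{1/r_0}$ --- gives $\mathcal A_{\tilde E}'(1) \ge 0$, hence $\mathcal A_E'(r_0) \ge 0$. In case (b), $\tilde \Phi$ satisfies $\tilde \Phi(0) = 0$, $D\tilde \Phi(0) = {\rm id}$, and the crucial scaling improvement $[\tilde \Phi]_{C^{1,1}} = r_0 [\Phi]_{C^{1,1}} \le r_0\eta_\circ$; Lemma \ref{lem.mono_2}(b) then yields $\mathcal A_{\tilde E}'(1) \ge -C r_0\eta_\circ$, so $\mathcal A_E'(r_0) \ge -C\eta_\circ$. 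The $r_0$-factor in the $C^{1,1}$ seminorm is exactly what makes the bound in (b) scale-invariant.

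For the rigidity statement in (a), observe that in the proof of Lemma \ref{lem.mono_2}(a) the nonnegativity $\mathcal A' \ge 0$ was obtained by chaining two inequalities: the competitor comparison \eqref{eq.comp2}, $(n-1)P(E; B_r) \le \mathcal H^{n-2}(\partial E \cap \partial B_r)$, and the pointwise lower bound $1/\sqrt{1-(x\cdot\nu(x))^2} \ge 1$ appearing in \eqref{eq.conecond}. If $\mathcal A$ is constant then $\mathcal A' \equiv 0$, so both inequalities are saturated for a.e.\ $r$. Equality in the pointwise bound forces $x\cdot\nu(x) = 0$ for $\mathcal H^{n-2}$-a.e.\ $x \in \partial E \cap \partial B_r$ at a.e.\ $r$, and the coarea formula upgrades this to $x\cdot\nu(x) = 0$ $\mathcal H^{n-1}$-a.e.\ on the reduced boundary $\partial^* E$. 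By a standard argument --- the radial field $x\mapsto x$ is tangent to $\partial^* E$ and its flow is the dilation $t\mapsto e^t x$, which must preserve $E$ --- this forces $tE = E$ for all $t>0$. The converse, that cones have constant $\mathcal A$, follows directly from $P(tE; B_r) = t^{n-1}P(E;B_{r/t})$.

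The main subtlety I foresee is bookkeeping of the ball on which $\tilde E$ is a minimizer: Lemma \ref{lem.mono_2} is stated assuming \eqref{eq.TOP} on $B_2$, whereas the rescaled $\tilde E$ only has minimality on $B_{1/r_0}$, which may be smaller than $B_2$ when $r_0 > 1/2$. However, the competitor construction in Lemma \ref{lem.mono_2} only modifies the set inside $B_1$, so its proof actually requires minimality on any open ball containing $\overline{B_1}$ --- a condition satisfied by $\tilde E$ for every $r_0 \in (0,1)$. Once this formal point is noted, no new ideas beyond those already present in Lemma \ref{lem.mono_2} and the classical cone characterization are needed.
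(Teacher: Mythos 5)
Your proposal is correct and follows essentially the same route as the paper: the paper also deduces Lemma \ref{lem.mono} by rescaling ($\lambda E$ with $\Phi^\lambda:=\lambda\Phi(\lambda^{-1}\cdot)$ and $\lambda=r^{-1}$, which is exactly your $\tilde E$, $\tilde\Phi$), exploiting that $[\Phi^\lambda]_{C^{1,1}}\le \lambda^{-1}\eta_\circ$ to get the scale-invariant bound in (b), and reading the cone rigidity in (a) off the equality case of \eqref{eq.conecond}. Your extra care about the ball on which the rescaled set is a minimizer is a legitimate observation that the paper glosses over, but it does not change the argument.
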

\begin{proof}
It follows by scaling Lemma \ref{lem.mono_2}. Part (a) is immediate, being the cone condition an immediate consequence of \eqref{eq.conecond}. For part (b), let us define, for any $\lambda > 0$, $\Phi^\lambda:= \lambda\Phi\left(\frac{1}{\lambda}\,\cdot\,\right)$, and 
\begin{equation}
\label{eq.not.lem}
\mathcal{A}^\lambda_E(r) := \frac{P\big(E; \Phi^\lambda(B_r)\big)}{r^{n-1}},\quad\textrm{for}\quad r>0.
\end{equation}

Note now, that 
\[
\mathcal{A}_E(r) = \frac{P\big(\lambda E; \lambda \Phi(B_r)\big)}{\lambda^{n-1}r^{n-1}} =\frac{P\big(\lambda E; \Phi^\lambda(B_{\lambda r})\big)}{\lambda^{n-1}r^{n-1}}=\mathcal{A}_{\lambda E}^\lambda(\lambda r).
\]
Differentiating both sides with respect to $r$ we obtain 
\begin{equation}
\label{eq.mono0}
\mathcal{A}_E'(r) = \lambda \left(\mathcal{A}^\lambda_{\lambda E}\right)'(\lambda r).
\end{equation}

On the other hand, applying Lemma~\ref{lem.mono_2} with $\lambda E$ and $\Phi^\lambda$,
\[
 \left(\mathcal{A}^\lambda_{\lambda E}\right)'(1) \ge -C[\Phi^\lambda]_{C^{1,1}(B_1)} \ge  -C \lambda^{-1} \eta_\circ.
\]
Putting it together with \eqref{eq.mono0} and fixing $\lambda = r^{-1}$ we obtain 
\[
\mathcal{A}'_E(r) = r^{-1} \left(\mathcal{A}^\lambda_{\lambda E}\right)'(1) \ge -C\eta_\circ,
\]
as we wanted to see.
\end{proof}

We now recall the well-known density estimates lemma for perimeter minimizers. It is a very standard result in the theory of minimal surfaces which can be found extensively in the literature. We mention, for example, the survey \cite{Sav10}. 

\begin{lem}
\label{lem.clasdensest}
Let $E\subset\R^n$ be a minimizer of the perimeter in $B_{r_\circ}$ for some $r_\circ > 0$, such that $0 \in \de E$. Then, 
\begin{align*}
|E\cap B_r| &\geq c r^{n},\\
 |E^c\cap B_r| &\geq cr^{n},\quad\textrm{for all }\quad r \in (0, r_\circ),
\end{align*}
for some $c$ constant depending only on the dimension $n$.
\end{lem}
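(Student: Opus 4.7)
The plan is to prove the classical differential inequality $V(r)^{(n-1)/n} \le C\, V'(r)$ for $V(r) := |E \cap B_r|$, which after integration yields the lower bound $V(r) \ge c r^n$. The second bound follows by symmetry since $E^c$ is also a perimeter minimizer in $B_{r_\circ}$.

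First, I would set up the key comparison. For a.e.\ $r \in (0, r_\circ)$, the function $V(r) = |E \cap B_r|$ is differentiable with $V'(r) = \mathcal H^{n-1}(E \cap \partial B_r)$ by the coarea formula, and moreover $\mathcal H^{n-1}\big(\partial^* E \cap \partial B_r\big) = 0$. Using $F := E \setminus \overline{B_r}$ as a competitor in the minimality condition, $F$ agrees with $E$ outside $B_r \subset B_{r_\circ}$, and
\[
P(F; B_{r_\circ}) = \mathcal H^{n-1}(E \cap \partial B_r) + P(E; B_{r_\circ} \setminus \overline{B_r}).
\]
Subtracting the corresponding decomposition of $P(E; B_{r_\circ})$ from both sides of $P(E; B_{r_\circ}) \le P(F; B_{r_\circ})$ gives the key bound
\[
P(E; B_r) \le \mathcal H^{n-1}(E \cap \partial B_r) = V'(r).
\]

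Next I would apply the isoperimetric inequality to the set $E \cap B_r$, whose perimeter in $\R^n$ decomposes (for a.e.\ $r$) as $P(E \cap B_r; \R^n) = P(E; B_r) + \mathcal H^{n-1}(E \cap \partial B_r)$. Combining with the previous step,
\[
c_n\, V(r)^{(n-1)/n} \le P(E \cap B_r; \R^n) \le 2 V'(r).
\]
This is the announced differential inequality. Rewriting it as $\frac{d}{dr} V(r)^{1/n} \ge \frac{c_n}{2n}$ and integrating from $0$ to $r$ (using $V(s) \to 0$ as $s \downarrow 0$, while noting that since $0 \in \partial E$ we also have $V(r) > 0$ for every $r > 0$, so the argument is nontrivial) yields $V(r) \ge c r^n$ for a constant $c$ depending only on $n$.

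The symmetric bound $|E^c \cap B_r| \ge c r^n$ follows by repeating the argument with $E^c$, which is also a minimizer of the perimeter in $B_{r_\circ}$ (since perimeter is invariant under complementation and boundary conditions are likewise interchangeable). There is no real obstacle here; the only mild technical point to be careful about is ensuring that the decomposition $P(E \cap B_r; \R^n) = P(E; B_r) + \mathcal H^{n-1}(E \cap \partial B_r)$ holds without extra boundary contributions, which is true for a.e.\ $r$ because the reduced boundary of $E$ meets at most countably many spheres $\partial B_r$ in a set of positive $\mathcal H^{n-1}$ measure.
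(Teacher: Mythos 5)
Your argument is correct and complete: the competitor $E\setminus\overline{B_r}$ gives $P(E;B_r)\le V'(r)$ for a.e.\ $r$, the isoperimetric inequality then yields $V^{(n-1)/n}\le C V'$, and integration (using $V(r)>0$ for all $r>0$ since $0\in\partial E$ in the measure-theoretic sense) gives the bound, with the complement handled by the symmetry $P(E;\Omega)=P(E^c;\Omega)$. The paper states this lemma without proof, deferring to the literature (e.g.\ the cited survey of Savin and Giusti's book), and your proof is exactly that standard argument, including the correct handling of the a.e.\ $r$ for which $\mathcal H^{n-1}(\partial^*E\cap\partial B_r)=0$.
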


We have a similar lemma for supersolutions to the minimal perimeter problem. 

\begin{lem}
\label{lem.densest}
Let $E^+\subset\R^n$ be a supersolution to the minimal perimeter problem in $B_{r_\circ}$ for some $r_\circ > 0$, such that $0 \in \de E^+$. Then, 
\begin{align*}
 |(E^+)^c\cap B_r| &\geq cr^{n},\quad\textrm{for all }\quad r \in (0, r_\circ),
\end{align*}
for some $c$ constant depending only on the dimension $n$.
\end{lem}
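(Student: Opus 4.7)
The plan is to compare $E^+$ with the additive competitor $F^+ := E^+ \cup B_r$, use the supersolution inequality from Definition~\ref{defi.sssolution}, and close the argument with the isoperimetric inequality plus a standard ODE integration. Set $V(r) := |(E^+)^c \cap B_r|$. By the convention in Section~\ref{sec.2} that measure-theoretic and topological boundaries agree, $0 \in \partial E^+$ forces $V(r) > 0$ for every $r \in (0, r_\circ)$; moreover $V$ is absolutely continuous on $(0, r_\circ)$ with $V'(r) = \mathcal{H}^{n-1}((E^+)^c \cap \partial B_r)$ for a.e.\ $r$ (Cavalieri / coarea).

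Next I would fix $r \in (0, r_\circ)$ in the full-measure set where $|D\chi_{E^+}|(\partial B_r) = 0$. A direct computation of $|D\chi_{F^+}|$ (noting that $\chi_{F^+} \equiv 1$ on $B_r$, coincides with $\chi_{E^+}$ outside $\overline{B_r}$, and has a jump across $\partial B_r$ equal to $\chi_{(E^+)^c}$) gives
\[
P(F^+; B_{r_\circ}) = P(E^+; B_{r_\circ} \setminus \overline{B_r}) + \mathcal{H}^{n-1}\bigl((E^+)^c \cap \partial B_r\bigr),
\]
while for the same $r$ one has $P(E^+; B_{r_\circ}) = P(E^+; B_{r_\circ} \setminus \overline{B_r}) + P(E^+; B_r)$. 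Since $F^+$ is admissible in Definition~\ref{defi.sssolution} ($E^+ \subset F^+$ and $\overline{F^+ \setminus E^+} \subset \overline{B_r} \Subset B_{r_\circ}$), subtracting the two identities yields
\[
P(E^+; B_r) \leq V'(r) \qquad \text{for a.e.\ } r \in (0, r_\circ).
\]

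To conclude, I would apply the sharp isoperimetric inequality in $\R^n$ to $A := (E^+)^c \cap B_r$, whose total perimeter equals $P(E^+; B_r) + \mathcal{H}^{n-1}((E^+)^c \cap \partial B_r)$. Together with the previous step this gives
\[
V(r)^{(n-1)/n} \leq C_n \bigl(P(E^+; B_r) + V'(r)\bigr) \leq 2 C_n\, V'(r),
\]
equivalently $\tfrac{d}{dr}\bigl[V^{1/n}\bigr] \geq (2 n C_n)^{-1}$ a.e.\ on $(0, r_\circ)$. Integrating from $0$ and using $V(0) = 0$ produces $V(r) \geq c\, r^n$ with $c = c(n) > 0$, which is exactly the claim.

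I do not expect a substantive obstacle: the argument is entirely parallel to the standard proof of Lemma~\ref{lem.clasdensest} (cf.\ \cite{Giu84}), and the asymmetry of the statement (only a lower bound on $|(E^+)^c \cap B_r|$) corresponds precisely to the asymmetry of Definition~\ref{defi.sssolution}, where only the additive competitor $F^+ \supset E^+$ is allowed. The only minor technical point is the accounting of $|D\chi_{E^+}|$ on $\partial B_r$, handled by restricting to the a.e.\ $r$ for which this boundary mass vanishes.
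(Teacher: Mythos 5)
Your argument is correct and is precisely the standard density-estimate proof that the paper invokes (its own ``proof'' is just the remark that the claim follows as in Lemma~\ref{lem.clasdensest}): compare with $E^+\cup B_r$, note that only the additive competitor is available for a supersolution, and close with the isoperimetric inequality and the differential inequality for $V(r)=|(E^+)^c\cap B_r|$. The one point you gloss over --- integrating $\frac{d}{dr}V^{1/n}\ge c$ ``from $0$'' requires knowing $V(r)>0$ for all $r>0$ before dividing by $V^{(n-1)/n}$ --- is handled by your earlier observation that $0\in\partial E^+$ forces $V>0$, so the proof is complete.
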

\begin{proof}
This is standard, and follows exactly the same as Lemma~\ref{lem.clasdensest}.
\end{proof}


Let us now prove the following proposition, stating that in order to prove that at some scale the solution is close enough to a wedge, it is enough to classify conical solutions.

\begin{prop}
\label{prop.rhoE_P}
Assume that in some dimension $n\ge 2$ the wedges $\Lambda_{{\gamma},\theta}$ are the only cones $E\subset \R^n$ satisfying \eqref{eq.TOP} with $\Phi = {\rm id}$ and any $\delta > 0$.

Assume that, for some $\delta>0$, the set $E\subset \R^n$ with $P(E;B_1)<\infty$ satisfies $\Phi(\Lambda^{\delta}) \cap B_1\subset E$ and \eqref{eq.TOP}, with $\Phi$ a $C^{1,1}$ diffeomorphism.

Then, for any $\eps > 0$, there exists $\rho > 0$ depending only on $n$, $\eps$, and $\|\Phi\|_{C^{1,1}}$, and $\|D\Phi^{-1}\|_{L^\infty}$, such that if $x_\circ \in \de E\cap \de\mathcal O \cap \overline{B_{1/2}}$, then
\[
\rho^{-1} (R_{x_\circ} E-x_\circ)\quad\textrm{ is }\eps\textrm{-close to } \Lambda_{{\gamma}, \theta},
\]
for some $\gamma$ and $\theta$ as in \eqref{eq.eandtheta} and for some rotation $R_{x_\circ}$ depending only on $x_\circ$.
\end{prop}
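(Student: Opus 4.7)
The plan is a three-step argument: a single-point blow-up to establish closeness to a wedge at one scale, iteration of Theorem~\ref{thm.main} to propagate the closeness to all smaller scales at a fixed point, and a continuity-plus-compactness argument to obtain uniformity in $x_\circ$.

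Fix $x_\circ \in \overline{B_{1/2}} \cap \partial E \cap \partial \mathcal{O}$, and choose $R_{x_\circ}$ via Lemma~\ref{lem.asumpPhi} so that the rescalings $E_r := r^{-1}R_{x_\circ}(E - x_\circ)$ satisfy \eqref{eq.TOP} in $B_{r^{-1}}$ with rescaled diffeomorphisms $\bar\Phi_r$ having $[\bar\Phi_r]_{C^{1,1}} \le Cr \to 0$ as $r \downarrow 0$ and obstacles $\bar\Phi_r(\Lambda^{\bar\delta_r})$ with $\bar\delta_r \in (0, C\delta)$. BV compactness together with the uniform density estimates (Lemma~\ref{lem.clasdensest} and Lemma~\ref{lem.densest}) extracts a subsequence $E_{r_k} \to E^*$ in $L^1_{\rm loc}$ and in local Hausdorff distance of boundaries, with $E^*$ itself satisfying \eqref{eq.TOP} for $\Phi = {\rm id}$. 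The almost-monotonicity Lemma~\ref{lem.mono}(b) at $x_\circ$ gives that $\mathcal{A}(r) + Cr$ is nondecreasing, so $\alpha := \lim_{r\downarrow 0}\mathcal{A}(r)$ exists. Lower semicontinuity of perimeter along $E_{r_k} \to E^*$ combined with the continuity of perimeter for the uniformly minimizing sequence forces $\mathcal{A}_{E^*}(s)$ to equal $\alpha$ for every $s > 0$; the rigidity in Lemma~\ref{lem.mono}(a) then promotes $E^*$ to a cone. By the classification hypothesis, $E^* = \Lambda_{\gamma(x_\circ), \theta(x_\circ)}$, so for each prescribed $\eps_1 > 0$ there is some $\rho^*(x_\circ) > 0$ at which $E$ is $\eps_1$-close to this wedge at scale $\rho^*(x_\circ)$.

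I may assume $\eps < 4\rho_\circ \eps_\circ$ (larger $\eps$ is automatic), where $\rho_\circ, \eps_\circ$ are the constants of Theorem~\ref{thm.main}; set $\eps_1 := \eps/(4\rho_\circ)$. Iterating Theorem~\ref{thm.main} at the geometric sequence of scales $\rho^*(x_\circ)\rho_\circ^j$ yields closeness of size $\le \eps_1 \rho_\circ^{j(1+\alpha)}$ to a (slowly varying) wedge at each such scale; interpolating between consecutive dyadic scales, using that wedges are cones and hence dilation invariant, gives $\eps/2$-closeness to some wedge at every scale $\rho \in (0, \rho^*(x_\circ)\rho_\circ]$. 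The map $y \mapsto \rho^{-1}R_y(E - y)$ at fixed $\rho$ is continuous in $L^1_{\rm loc}$ and in local Hausdorff distance of boundaries (using the density estimates together with the continuous dependence of $R_y$ on $y$ from Lemma~\ref{lem.asumpPhi}), so each $x_\circ$ admits an open neighborhood $U(x_\circ)$ on which the closeness at scale $\rho^*(x_\circ)$ persists to the same wedge after a harmless doubling of $\eps_1$, and the iteration then extends $\eps$-closeness to all smaller scales for $y \in U(x_\circ) \cap \partial E \cap \partial \mathcal{O}$. A finite subcover $\{U(x_\circ^i)\}_{i=1}^N$ of the compact set $\overline{B_{1/2}}\cap\partial E\cap\partial\mathcal{O}$ and the choice $\rho := \rho_\circ\min_i\rho^*(x_\circ^i) > 0$ produces the uniform scale claimed in the proposition.

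The main obstacle is the constancy of $\mathcal{A}_{E^*}$ in the first step, which translates the almost-monotonicity of Lemma~\ref{lem.mono}(b) at the original scale into a limiting identity for the blow-up; once $E^*$ is identified with a wedge via the classification hypothesis, the iteration and covering steps are essentially perturbative consequences of Theorem~\ref{thm.main} and elementary compactness.
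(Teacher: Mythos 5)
Your Step 1 reproduces the paper's blow-up argument almost exactly (BV compactness plus density estimates give Hausdorff convergence of boundaries; the almost-monotonicity of $\mathcal A$ forces the limit to be a cone; the classification hypothesis makes it a wedge). The departure is in Steps 2 and 3. The paper proves the conclusion for all small $\rho$ at a single fixed $x_\circ$ by running the whole compactness-and-monotonicity argument as a contradiction, while you first extract one scale $\rho^*(x_\circ)$ of $\eps_1$-closeness, propagate it to smaller scales by iterating Theorem~\ref{thm.main}, and then sweep the contact set with a continuity-plus-finite-subcover argument. This last step is the genuinely new content: the paper's proof argues only at one fixed $x_\circ$ and never explicitly produces a scale that is uniform over the contact set, so your covering argument is a welcome clarification of where the uniformity in $x_\circ$ actually comes from. (Both proofs, yours and the paper's, still leave the scale implicitly dependent on the set $E$, since $\rho^*(x_\circ)$ comes from a subsequential blow-up limit with no a priori rate; neither controls $\rho$ by the quantities listed in the statement alone. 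That caveat is shared.)

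There is one bookkeeping error you should fix. With $\eps_1:=\eps/(4\rho_\circ)$ the iteration gives $\eps_1\rho_\circ^{j\alpha}$-closeness after rescaling to $B_1$ at scale $\rho^*\rho_\circ^j$, and interpolating to an intermediate scale $r\in(\rho^*\rho_\circ^{j+1},\rho^*\rho_\circ^j]$ loses an extra factor $\rho_\circ^{-1}$, giving $\eps_1\rho_\circ^{j\alpha-1}$. For small $j$ (all $j\lesssim 2/\alpha$ when $\rho_\circ$ is small) this is larger than $\eps$, so the claim of $\eps/2$-closeness ``at every scale $\rho\in(0,\rho^*\rho_\circ]$'' is wrong as stated. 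The fix is easy: take $\eps_1:=c\,\eps\,\rho_\circ$ for a small constant $c$ (or simply run $j_\circ\sim 1/\alpha$ iterations before declaring victory and set $\rho:=\rho_\circ^{j_\circ}\min_i\rho^*(x_\circ^i)$). None of this affects the structure of the argument.
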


\begin{proof}
After a translation, let us start by assuming that $x_\circ = 0$. Let us also take a rotation $R_{x_\circ}$ of the whole setting, in such a way that, if we denote $\Phi_k := k\Phi$, then $R_{x_\circ} \Phi_k(\Lambda^\delta)$ converges in Hausdorff distance locally to $\Lambda^{\delta'}$ as $k\to \infty$ for some $\delta' > 0$ (i.e., we take the blow-up of a Lipschitz boundary). Notice that the value $\delta'$ is determined only by $\delta$ and $\Phi$. By redefining $\Phi$ if necessary, let us assume $R_{x_\circ} = {\rm id}$ for simplicity. (Note that we could also argue via Lemma~\ref{lem.asumpPhi}.)

Let us argue by contradiction, and assume that the thesis does not hold.

Let $\rho_k = k^{-1}$, and consider the sequence of sets $E_k = \rho_k^{-1}E$. Notice that, for $\Phi_k := k\Phi$, each $E_k$ fulfils $\Phi_k(\Lambda^\delta) \cap B_k\subset E_k$ and solves a thin obstacle problem of the type 
\begin{equation}\label{eq.TOPk}
 P(E_k;B_k)\le P(F;B_k) \quad \forall F\  \mbox{such that }E_k\setminus B_k = F\setminus B_k \mbox{ and } \Phi_k(\Lambda^{\delta}) \cap B_k\subset F.
\end{equation}
Recall that the set $\Phi_k(\Lambda^\delta)$ converges in Hausdorff distance to $\Lambda^{\delta'}$ as $k\to \infty$. From minimality, we have compactness in $L^1_{\rm loc}$ of $E_k$, so that, up to a subsequence, $E_k\xrightarrow{L^1_{\rm loc}}E_\infty$, for some global solution to the $\delta'$-thin obstacle problem with $\Phi = {\rm id}$, $E_\infty$, with $\Lambda^{\delta'} \subset E_\infty$. It immediately follows that $0\in \overline{E_\infty}$. 

On the other hand, by the density estimates in Lemma~\ref{lem.densest}, since each $E_k$ is a supersolution to the minimal perimeter problem in $B_1$ and $0 \in \de E_k$ for all $k$, we  have 
\[
|E_k^c\cap B_r|\ge c r^n,\quad\textrm{ for all } r \in (0, 1),
\]
for some constant $c$. The convergence in $L^1_{\rm loc}$ implies that the limit also fulfils $|E_\infty^c\cap B_r| \ge cr^n$, and therefore $0\in \de E_\infty$.

Using the same notation as in the proof of Lemma~\ref{lem.mono} (see \eqref{eq.not.lem}), we know 
\[
\mathcal{A}_E(r) = \mathcal{A}^k_{E_k}(kr),\quad\textrm{for all } r > 0.
\]

Notice, also, that
\[
\mathcal{A}_{E_k}^k(r)  \to \mathcal{A}_{E_\infty}(r) := \frac{P\big(E; B_r\big)}{r^{n-1}}\quad\textrm{locally as }k\to \infty,
\]
where we are using the $L^1_{\rm loc}$ convergence of $E_k$ to $E_\infty$, and the fact that $\Phi^k = k\Phi(k^{-1}\,\cdot\,)\to {\rm id}$ as $k\to \infty$ in $C^{1,1}_{\rm loc}$. In particular, we have that 
\[
 \lim_{\rho \downarrow 0} \mathcal{A}_E(\rho) = \mathcal{A}_{E_\infty} (r),\quad\textrm{ for all } r > 0.
\]
Thanks to Lemma~\ref{lem.mono} part (b), the left-hand side limit is well defined. That is, $\mathcal{A}_{E_\infty}(r)$ is bounded and constant for any $r > 0$, which, from Lemma~\ref{lem.mono} part (a) implies that $E_\infty$ is a cone ($tE_\infty = E_\infty$ for any $t > 0$). By assumption, therefore, $E_\infty = \Lambda_{\gamma, \theta}$ for some $\gamma$ and $\theta$; and we have that $E_k$ is converging in $L^1_{\rm loc}$ to some $\Lambda_{\gamma, \theta}$. 

Finally, in order to reach the contradiction, let us show that the convergence of $\de E_k$ to $\de E_\infty$ is in Hausdorff distance locally, which will complete the proof. 

Suppose that is is not. That is, after extracting a subsequence, we can assume that there exists some sequence of points $y_k\in \de E_k$ such that $y_k \to y_\infty$ and ${\rm dist}(y_k, \de E_\infty) > \eps > 0$ for some $\eps > 0$ and for all $1\le k\le\infty$. We have a dichotomy, either $y_\infty \in E_\infty$ or $y_\infty \in E_\infty^c$.

Let us now use the density estimate in Lemma~\ref{lem.densest}. If $y_\infty\in E_\infty$ then, after a subsequence if necessary, $|E_k^c\cap B_\eps(y_k)|\ge c \eps^n$ but $|E_\infty^c\cap B_\eps(y_\infty)| = 0$, which is a contradiction with the $L^1_{\rm loc}$ convergence. On the other hand, if $y_\infty\in E_\infty^c$ assume that after a subsequence $y_k\in E_\infty^c$ for all $k > 0$. We have that for $k$ large enough $y_k\in \de E_k$ is a point around which $E_k$ is a minimal surface (being $E_\infty$ a barrier \emph{from below}). That is, we can use the classical density estimates for minimal surfaces in Lemma~\ref{lem.clasdensest} to reach that $|E_k\cap B_\eps(y_k)|\ge c\eps^n$ but $|E_\infty \cap B_\eps(y_\infty)| = 0$, again, a contradiction. 
\end{proof}

 Thus, in order to prove Corollary~\ref{cor.cor_2}, it will be enough to classify cones.

\begin{proof}[Proof of Proposition~\ref{prop.classif_cones}]
The proof is by induction on the dimension $n$. 
\\[0.1cm]
{\bf Step 1: Base case. Dimension $n = 2$.} 

Assume that $\Sigma^2\subset \R^2$ is a cone satisfying \eqref{eq.TOP}, in other words, the boundary of $\Sigma^2$ in $B_1$ consists of radii of length one. By assumption, we have $(0, -1)\in\Sigma^2\cap S^1$. Now, if $\Sigma^2$ were not a wedge (that is, if $\Sigma^2\cap S^1$ were disconnected) then the convex hull of $\Sigma^2\cap B_1$ would be a set containing the obstacle (it contains $\Sigma^2$) and having strictly less relative perimeter in $B_1$ than $\Sigma^2$. This would contradict the minimality of $\Sigma^2$ ---i.e. \eqref{eq.TOP}. 
\\[0.1cm]
{\bf Step 2: Induction step.} Suppose that it holds up to dimension $n-1 \ge 2$. Let us show it for dimension $n$. 


Let us first prove regularity of the cone around contact points. Assume that we have, without loss of generality, $x_\circ = \boldsymbol{e}_1 = (1,0,\dots,0)\in \de\Sigma\cap\de B_1$. The first thing to notice is that the blow up of $\Sigma$ around $x_\circ$ is a wedge $\Lambda_{\gamma_1,\theta_1}$. Indeed, the blow-up is a cone by the monotonicity formula, and thanks to the fact that $\Sigma$ is a cone and $x_\circ = \boldsymbol{e}_1$, we get that the blow up at $x_\circ$ must be of the form $\R\times\Sigma^{n-1}$; where now $\Sigma^{n-1}\subset\R^{n-1}$ is a cone in $n-1$ dimensions such that satisfies \eqref{eq.TOP} (also taking $\Lambda^\delta$ in $n-1$ dimensions). In particular, by induction step, $\Sigma^{n-1} = \Lambda_{\gamma_1,\theta_1}^{n-1}\subset \R^{n-1}$, where $\Lambda_{\gamma_1,\theta_1}^{n-1}$ denotes $\Lambda_{\gamma_1,\theta_1}$ as seen in $n-1$ dimensions. This immediately yields that the blow up at $x_\circ$ is a wedge of the form $\Lambda_{\gamma_1,\theta_1}$. By Proposition~\ref{prop.rhoE_P} and Theorem~\ref{thm.main2}, $\de \Sigma$ is a smooth minimal surface around any $x_\circ\in \de\Sigma\cap\{x_{n-1} = x_n = 0\}$ in $\{\pm x_{n-1}\ge 0\}$ up to $\{x_{n-1} = 0\}$. 

Let us separate the proof between both sides $\pm x_{n-1} \ge 0$, and let us focus first on $x_{n-1} \ge 0$ (the other side follows analogously). We can now take $s^* = \max\{s\ge \delta : \Lambda^s \subset \Sigma\textrm{ in }x_{n-1} \ge 0\}$. Notice that it is indeed a maximum, since it is enough to check that $\Lambda^s\cap S^{n-1}\subset \Sigma\cap S^{n-1}$, where $S^{n-1}\subset \R^n$ denotes the $(n-1)$-dimensional sphere. 

The boundaries $\de\Sigma\cap S^{n-1}$ and $\de\Lambda^{s^*} \cap S^{n-1}$ must  touch at a point $x_\circ \in \{x_{n-1} \ge 0 \}$. If $x_\circ\in \{x_{n-1} > 0\}$, then by the strong maximum principle for minimal surfaces we must have $\Sigma_\mathcal{O} = \Lambda^{s^*}$ in $\{x_{n-1}\ge 0\}$, where $\Sigma_{\mathcal{O}}$ denotes the connected component of $\Sigma\setminus\{x_{n-1} = x_n = 0\}$ that contains the thin obstacle $\mathcal{O}$ (which, in this case, is flat). On the other hand, if $x_\circ \in \{x_{n-1} = x_n = 0\}$, then we have previously shown (by induction and dimension reduction) that $\de\Sigma\cap \{x_{n-1}\ge0\}$ is $C^1$ up to its boundary around the points $x_\circ$ and touches the half-plane of $\de\Lambda^{s^*}$ tangentially at $x_\circ$. Using the boundary strong maximum principle (Hopf lemma) we obtain again that $\Sigma_\mathcal{O} = \Lambda^{s^*}$ in $\{x_{n-1}\ge 0\}$.

The same holds for the other side, $x_{n-1}\le 0$, so that in all we have that
\[
\Sigma_{\mathcal{O}} = \Lambda_{\gamma,\theta}
\]
for some $\gamma$ and $\theta$ as in \eqref{eq.eandtheta}. 

We can now repeat the argument, but opening $\Lambda_{\gamma,\theta}$ instead, until we reach another connected component of $\Sigma\setminus\{x_{n-1} = x_n = 0\}$.
Proceeding iteratively, this yields that $\Sigma$ must be one dimensional; that is, $\Sigma$ is the cone $\R^{n-2}\times\Sigma^{2}$ for some cone $\Sigma^2\subset \R^2$.
 By the base case in Step 1 minimality implies that $\Sigma^2$ must be a convex angle and hence $\R^{n-2}\times\Sigma^{2}$ is a wedge.
\end{proof}

Once cones are classified, we can proceed with the proof of Corollary~\ref{cor.cor_2},

\begin{proof}[Proof of Corollary~\ref{cor.cor_2}]
We will apply Theorem~\ref{thm.main2} after an translation, rotation, and scaling. We have to check that the hypotheses are fulfilled. 


By definition of minimizer of \eqref{eq.thepb} (see Definition \ref{defi.notionmin})   there exist $\delta_k\downarrow 0$, $E_k$ minimizers of 
\eqref{eq.wedgepb}
such that $\chi_{E_k}\rightarrow \chi_E$ in $L^1(B_1)$. 
For each $E_k$ let $x_\circ$ be any point in $B_{1/2}\cap\partial E_k \cap \partial \mathcal O$.
Let $E^{x_\circ, \rho}_k := \psi_{x_\circ}(E_k) = \rho^{-1} (R_{x_\circ} E_k-x_\circ)$, where $\psi_{x_\circ}$ denotes the change of coordinates from Lemma~\ref{lem.asumpPhi}. Let us also denote $\Phi_\rho^{x_\circ} := \bar \Phi$ the new diffeomorphism (also from Lemma~\ref{lem.asumpPhi}). 

Thus, $E^{x_\circ, \rho}_k$ is a minimizer of the $\bar\delta$-thin obstacle problem around $x_\circ$ with diffeomorphism $ \Phi_{x_\circ}^\rho$ such that $ \Phi_\rho^{x_\circ} (0) = 0$, $D \Phi_\rho^{x_\circ} (0) = {\rm id}$, and $[ \Phi_\rho^{x_\circ}]_{C^{1,1}(B_1)} \le  C\rho$ thanks to Lemma~\ref{lem.asumpPhi}. 

On the other hand, as a consequence of Proposition~\ref{prop.classif_cones} and  Proposition~\ref{prop.rhoE_P} in any dimension $n \ge 2$, we reach that, for $\rho$ small enough,  $E^{x_\circ, \rho}_k$ is  $\eps_\circ$-close to $\Lambda_{\gamma, \theta}$ 
for some $\gamma$ and $\theta$. Also, for $\rho$ small enough, we will have 
$[ \Phi_\rho^{x_\circ}]_{C^{1,1}(B_1)} \le \eps_\circ^{1+\frac{1}{2}}$ where $\eps_\circ>0$ is the constant in Theorem \ref{thm.main2}. Therefore, applying Theorem \ref{thm.main2} to $E^{x_\circ, \rho}_k$  (and shrinking by a factor $\rho$) we obtain that
 $\de E_k$ has the following $C^{1,\alpha}$ structure in $B_{\rho/2}(x_\circ)$. Either:
\begin{enumerate}
\item[(a)] In appropriate coordinates $y$,  $(\Phi^{x_\circ})^{-1}\big(R_{x_\circ}(\partial E_k-x_\circ) \big)$ is the graph $\{y_n = h(y')\}$ of a function $h\in C^0(\overline{B_{\rho/2}'})$ satisfying $h \in C^{1,\alpha}(\overline{B_{\rho/2}'^+})\cap C^{1,\alpha}(\overline{B_{\rho/2}'^-})$. Moreover, we have $h\ge 0$ on $y_{n-1} = 0$ and $\nabla h$ is continuous on $\{y_{n-1} = 0\}\cap \{h > 0\}$. 
\end{enumerate}
\noindent or 
\begin{enumerate}
\item[(b)]   $R(\partial E_k-x_\circ) \cap B_{\rho/2}$ is the union of two $C^{1,1-}$ surfaces that meet on $\partial \mathcal O$ with full contact set in $B_{\rho/2}$.
\end{enumerate}

Now we deduce in case (a) that in some new coordinates with origin at $x_\circ$ we have
$\Phi^{-1}\big(\partial E_k \big)$ is the graph $\{z_n = \tilde h(z')\}$ of a function $\tilde h\in C^0(\overline{B_{\tilde \rho}'})$ satisfying $\tilde h \in C^{1,\alpha}(\overline{B_{\tilde \tilde \rho}'^+})\cap C^{1,\alpha}(\overline{B_{\tilde \rho}'^-})$. Moreover, we have $\tilde h\ge 0$ on $z_{n-1} = 0$ and $\nabla \tilde h$ is continuous on $\{z_{n-1} = 0\}\cap \{\tilde h > 0\}$.

Since either (a) or (b) holds for $E_k$ with estimates independent of $k$, we can pass to the limit and show that  either (a) or (b) also holds for $E$.

Finally, if the alternative (b) near some point $x_\circ$ then  using that $\partial \mathcal O$ is of class $C^{k,\beta}$ (and the classical $C^{k,\beta}$ regularity up to the boundary results for minimal surfaces \cite{Jen80}) we obtain that $\partial E$ splits into two $C^{k,\beta}$ minimal surfaces with boundary in a small ball around $x_\circ$.
\end{proof}

\begin{proof}[Proof of Theorem~\ref{thm.generic}]
After having introduced the appropriate notion of solution, we have that Theorem~\ref{thm.generic} corresponds to Corollary~\ref{cor.cor_2}. 
\end{proof}

\end{document}